\newcommand{\sbullet}{%
\hbox{\fontfamily{lmr}\fontsize{.4\dimexpr(\f@size pt)}{0}\selectfont\textbullet}}
\let\oldnl\nl
\newcommand{\nonl}{\renewcommand{\nl}{\let\nl\oldnl}}
\def\BibTeX{{\rm B\kern-.05em{\sc i\kern-.025em b}\kern-.08em
		T\kern-.1667em\lower.7ex\hbox{E}\kern-.125emX}}
\newtheorem{thm}{Theorem}
\newtheorem{prop}{Proposition}
\newtheorem{dfn}{Definition}
\newtheorem{rem}{Remark}
\newtheorem{lem}[thm]{Lemma}
\newtheorem{cor}{Corollary}[thm]
\newtheorem{assumption}{Assumption}
\theoremstyle{definition}
\newtheorem*{prob*}{Problem}
\theoremstyle{remark}
\newcounter{relctr} 
\everydisplay\expandafter{\the\everydisplay\setcounter{relctr}{0}} 
\newcommand\labelrel[2]{%
  \begingroup
    \refstepcounter{relctr}%
    \stackrel{\textnormal{(\alph{relctr})}}{\mathstrut{#1}}%
    \originallabel{#2}%
  \endgroup
}
\global\long\def\11{\mathbbm{1}}
\newcommand\numberthis{\addtocounter{equation}{1}\tag{\theequation}}
\newcommand{\ra}{\rightarrow}
\newcommand{\la}{\leftarrow}
\newcommand{\mcl}{\mathcal}
\newcommand{\mbb}{\mathbb}
\newcommand{\mbf}{\mathbf}
\newcommand{\eps}{\epsilon}
\newcommand{\ov}{\overline}
\newcommand{\udl}{\underline}
\newcommand{\uda}{\underaccent}
\newcommand{\ulbar}[1]{ \uda{\bar}{\bar{#1}} }
\def \defeq{\overset{\Delta}{=}}
\def \wh{\widehat}
\def \wt{\widetilde}
\def \pr{\mbb{P}}
\def \l{\left}
\def \r{\right}
\newcommand{\argmin}{\operatorname{argmin}}
\newcommand\mymathop[1]{\mathop{\operatorname{#1}}}
\newcommand{\dotp}[2]{\langle #1 \; , \; #2 \rangle}
\newcommand{\Stt}[1]{S_{#1}}
\newcommand{\stt}[1]{s_{#1}}
\newcommand{\sspace}{\mcl{S}}
\newcommand{\Otn}[2]{O_{#1}^{(#2)}}
\newcommand{\Ot}[1]{O_{#1}}
\newcommand{\ot}[1]{o_{#1}}
\newcommand{\on}[1]{o^{(#1)}}
\newcommand{\otn}[2]{o_{#1}^{(#2)}}
\newcommand{\ospace}{\mcl{O}}
\newcommand{\onspace}[1]{ \ospace^{(#1)} }
\newcommand{\Atn}[2]{A_{#1}^{(#2)}}
\newcommand{\At}[1]{A_{#1}}
\newcommand{\atn}[2]{a_{#1}^{(#2)}}
\newcommand{\at}[1]{a_{#1}}
\newcommand{\an}[1]{a^{(#1)}}
\newcommand{\aspace}{\mcl{A}}
\newcommand{\anspace}[1]{\aspace^{(#1)}}
\newcommand{\borel}[1]{\mcl{B}\l(#1\r)}
\newcommand{\m}[1]{M_1\l( #1 \r)}
\newcommand{\metric}[1]{d_{#1}}
\newcommand{\Hst}[1]{H_{#1}}
\newcommand{\Hstn}[2]{H_{#1}^{(#2)}}
\newcommand{\hst}[2]{#1_{#2}}
\newcommand{\hstn}[3]{#1_{#2}^{(#3)}}
\newcommand{\hsspace}{\mcl{H}}
\newcommand{\hstspace}[1]{\hsspace_{#1}}
\newcommand{\hsnspace}[1]{\hsspace^{(#1)}}
\newcommand{\hstnspace}[2]{\hsspace_{#1}^{(#2)}}
\newcommand{\uspace}{\mcl{U}}
\newcommand{\uspacen}[1]{\uspace^{(#1)}}
\newcommand{\mun}[1]{\mu^{(#1)}}
\newcommand{\uspacemix}{\uspace_{\mathrm{mixed}}}
\newcommand{\vvspace}{\mcl{V}}
\newcommand{\ut}[2]{#1_{#2}}
\newcommand{\un}[2]{#1^{(#2)}}
\newcommand{\utn}[3]{#1_{#2}^{(#3)}}
\newcommand{\prup}[2]{ \pr^{\l(#1\r)}_{#2} }
\newcommand{\E}[2]{ \mbb{E}^{\l(#1\r)}_{#2}}
\newcommand{\cCost}{c\l( \Stt{t}, \At{t} \r)}
\newcommand{\dCost}{ d\l( \Stt{t}, \At{t} \r) }
\newcommand{\lCost}{l^{(\lambda)}\l( \Stt{t}, \At{t} \r) }
\newcommand{\constraintv}{\breve{D}}	
\newcommand{\lag}[2]{L^{(P_1, \alpha)} \l(#1, #2 \r)}	
\newcommand{\lags}[2]{L \l(#1, #2 \r)}	
\newcommand{\lagmix}[2]{ \wh{L}^{(P_1, \alpha)}\l(#1, #2 \r) }	
\newcommand{\lagsmix}[2]{\wh{L}\l(#1, #2 \r) }
\newcommand{\fullccost}[1]{C^{(P_1, \alpha)}\l(#1\r)}
\newcommand{\fullccosts}[1]{C\l(#1\r)}
\newcommand{\fulldkcosts}[2]{D_{#1}\l(#2 \r)}
\newcommand{\fulldcost}[1]{D^{(P_1, \alpha)}\l( #1 \r)}
\newcommand{\fulldcosts}[1]{D\l( #1 \r)}
\newcommand{\optcost}{\udl{C}^{(P_1, \alpha)}}
\newcommand{\optcosts}{\udl{C}}
\newcommand{\infsup}[2]{\inf\limits_{#1}\sup\limits_{#2}}
\newcommand{\supinf}[2]{\sup\limits_{#1}\inf\limits_{#2}}
\newcommand{\xuspace}{\mcl{X}_{\mcl{U}}}
\newcommand{\xuspacen}[1]{\mcl{X}_{\mcl{U}^{(#1)}}}
\newcommand{\xtspace}[1]{\mcl{X}_{#1}}
\newcommand{\useq}[2]{\hspace{0pt}^{#1}#2}
\newcommand{\pruphst}[3]{p_{P_1} \l(#1, #2, #3\r)}
\newcommand{\pruphsts}[3]{p\l(#1, #2, #3\r)}
\newcommand{\zuphst}[3]{W_{P_1}\l(#1, #2, #3\r)}
\newcommand{\zuphsts}[3]{W\l(#1, #2, #3\r)}
\newcommand{\proj}[2]{\texttt{proj}^{(#1)}\l( #2\r)}
\newcommand{\Gt}[1]{\Gamma_{#1}}
\newcommand{\Gtn}[2]{\Gamma_{#1}^{(#2)}}
\newcommand{\gt}[1]{\gamma_{#1}}
\newcommand{\gtn}[2]{\gamma_{#1}^{(#2)}}
\newcommand{\gtspace}[1]{\mcl{G}_{#1}}
\newcommand{\gtnspace}[2]{\mcl{G}_{#1}^{(#2)}}
\newcommand{\wtHstn}[2]{\wt{H}_{#1}^{(#2)} }
\newcommand{\wthstnspace}[2]{\wt{\mcl{H}}_{#1}^{(#2)} }
\newcommand{\Qtl}[2]{Q_{#1}^{\l(#2\r)}}
\newcommand{\whQtl}[2]{\wh{Q}_{#1}^{\l(#2\r)}}
\newcommand{\whQl}[1]{\wh{Q}^{\l(#1\r)}}
\newcommand{\Vtl}[2]{V_{#1}^{\l(#2\r)}}
\newcommand{\whVtl}[2]{\wh{V}_{#1}^{\l(#2\r)}}
\newcommand{\whVl}[1]{\wh{V}^{\l(#1\r)}}
\newcommand{\Zhtn}[2]{\wh{Z}_{#1}^{\l(#2\r)}}
\newcommand{\zhn}[1]{\hat{z}^{\l(#1\r)}}
\newcommand{\zhtn}[2]{\wh{z}_{#1}^{\l(#2\r)}}
\newcommand{\zhnspace}[1]{\hat{\mcl{Z}}^{(#1)}}
\newcommand{\zhnspaces}[1]{\hat{\udl{\mcl{Z}}}^{(#1)}}
\newcommand{\zhtnspace}[2]{\hat{\mcl{Z}}_{#1}^{\l(#2\r)}}
\newcommand{\zhtnspaces}[2]{\hat{\udl{\mcl{Z}}}_{#1}^{\l(#2\r)}}
\newcommand{\lamdah}{\wh{\xi}}
\newcommand{\Lamdaht}[1]{\wh{\Xi}_{#1}}
\newcommand{\Lamdahtn}[2]{\wh{\Xi}_{#1}^{\l(#2\r)}}
\newcommand{\lamdaht}[1]{\wh{\xi}_{#1}}
\newcommand{\lamdahtn}[2]{\wh{\xi}_{#1}^{\l(#2\r)}}
\newcommand{\lamdahn}[1]{\wh{\xi}^{\l(#1\r)}}
\newcommand{\lamdahspace}{\mcl{E}}
\newcommand{\lamdahtspace}[1]{\wh{\mcl{E}}_{#1}}
\newcommand{\lamdahnspace}[1]{\wh{\mcl{E}}^{\l(#1\r)}}
\newcommand{\lamdahtnspace}[2]{\wh{\mcl{E}}_{#1}^{\l(#2\r)}}
\newcommand{\varthetahtn}[2]{\wh{\vartheta}_{#1}^{\l(#2\r)}}
\newcommand{\varthetahn}[1]{\wh{\vartheta}^{\l(#1\r)}}
\newcommand{\phihtn}[2]{\wh{\phi}_{#1}^{\l(#2\r)}}
\newcommand{\varphin}[1]{\varphi^{(#1)}}
\newcommand{\rhon}[1]{\rho^{(#1)}}
\newcommand{\psin}[1]{\psi^{(#1)}}
\begin{document}
%
\title{Cooperative Multi-Agent Constrained POMDPs: Strong Duality and Primal-Dual Reinforcement Learning with Approximate Information States
}
%
%
%
%
\author{Nouman~Khan,~\IEEEmembership{Member,~IEEE,}
       and~Vijay~Subramanian,~\IEEEmembership{Senior Member,~IEEE}
}

\maketitle
\begin{abstract}
We study the problem of decentralized constrained POMDPs in a team-setting where the multiple non-strategic agents have asymmetric information. Strong duality is established for the setting of infinite-horizon expected total discounted costs when the observations lie in a countable space, the actions are chosen from a finite space, and the immediate cost functions are bounded. Following this, connections with the common-information and approximate information-state approaches are established. The approximate information-states are characterized independent of the Lagrange-multipliers vector so that adaptations of the multiplier (during learning) will not necessitate new representations. Finally, a primal-dual multi-agent reinforcement learning (MARL) framework based on centralized training distributed execution (CTDE) and three time-scale stochastic approximation is developed with the aid of recurrent and feed-forward neural-networks as function-approximators.
\end{abstract}

\begin{IEEEkeywords}
Planning and Learning in Multi-Agent POMDP with Constraints, Strong Duality, Lower Semi-continuity, A Minimax Theorem for Functions with Positive Infinity, Tychonoff's theorem, Common Information, Approximate Information State, Dynamic Programming, Centralized Training Distributed Execution, Stochastic Approximation.
\end{IEEEkeywords}

%
\IEEEpeerreviewmaketitle

\section{Introduction}\label{sec:introduction}
Single-Agent Markov Decision Processes (SA-MDPs) \cite{bellman57} and Single-Agent Partially Observable Markov Decision Processes (SA-POMDPs) \cite{astrom65} have long served as the basic building-blocks in the study of sequential decision-making. 
An SA-MDP is an abstraction in which an agent (sequentially) interacts with a fully-observable Markovian environment to solve a multi-period optimization problem; in contrast, in SA-POMDP, the agent only gets to observe a noisy or incomplete version of the environment. 
In 1957, Bellman proposed dynamic-programming as an approach to solve SA-MDPs \cite{bellman57,howard:dp}. This combined with the characterization of SA-POMDP into an equivalent SA-MDP \cite{smallwood1973optimal,sondik1978optimal,kaelbing199899} (in which the agent maintains a belief about the environment's true state) made it possible to extend dynamic-programming results to SA-POMDPs. \emph{Reinforcement learning}~\cite{sutton98} based algorithmic frameworks (\cite{watkins1992,rummery1994,schulman2015,schulman2017,subramanian19,subramanian22} to name a few) use data-driven dynamic-programming approaches to solve such single-agent sequential decision problems when the environment is unknown.

In many engineering systems, there are multiple decision-makers that collectively solve a sequential decision-making problem but with safety constraints: e.g., a team of robots performing a joint task, a fleet of automated cars navigating a city, multiple traffic-light controllers in a city, etc. Bandwidth constrained communications and communication delays in such systems lead to a decentralized team problem with information asymmetry. In this work, we study a fairly general abstraction of such systems and develop an algorithmic framework that (approximately) solves the underlying constrained decision problem. The abstraction that we consider is that of a cooperative multi-agent constrained POMDP, 
hereon referred to as MA-C-POMDP. The special cases of MA-C-POMDP when there are no constraints, when there is only one agent, or when the environment is fully observable to each agent are referred to as MA-POMDP\footnote{For a good introduction to MA-POMDPs, see \cite{oliehoek16}.}, SA-C-POMDP, and MA-C-MDP respectively. The relationships among these models are shown in Figure \ref{fig:model_relationships}.

\subsection{Related Work}
Prior work on planning and learning under constraints has primarily focused on single-agent constrained MDP (SA-C-MDP) where unlike in SA-MDPs, the agent solves a constrained optimization problem. For this setup, a number of fundamental results from the planning perspective have been derived -- for instance \cite{altman94,altman96,feinberg94,feinberg95,feinberg96,feinberg2000,feinberg2020}; see \cite{altman-constrainedMDP} for a details of the convex-analytic approach. 
These results have led to the development of many algorithms in the learning setting: see \cite{borkar2005AnAA,bhatnagar2010AnAA,bhatnagar2012AnOA,Wei2022APM,wei22a-pmlr-v151,bura2021,vaswani2022}. Unlike SA-C-MDPs, rigorous results for SA-C-POMDPs are limited; few  reference works include \cite{dongho2011,jongmin18,undurti2010,jamgochian2022}.

There has been little work done on MA-POMDPs and MA-C-POMDPs due to challenges arising from the combination of \emph{partial observability} of the environment and \emph{information-asymmetry}\footnote{Here, information-asymmetry refers to the mismatch in the information each agent has when choosing their action; information asymmetry in decision problems typically results in non-classical information structures~\cite{mahajan2012information}.}; solving a finite-horizon MA-POMDP with more than two agents is known to be NEXP-complete \cite{bernstein00}. Nevertheless, conceptual approaches exist to establish solution methodologies and structural properties in (finite-horizon) MA-POMDPs namely: i) the person-by-person approach~\cite{witsenhausen1979structure}; ii) the designer's approach~\cite{witsenhausen1973standard}; and iii) the \emph{common-information (CI) approach}~\cite{nayyar13,nayyar14}, which we will use in the second part of this work. In the CI approach, the common information of all agents is used to instantiate a fictitious/virtual entity, known as the \emph{coordinator}, that takes an action based only on this common information, while its action is an enforcing prescription to each agent on how to act given a specific realization of their private history. Akin to a SA-POMDP, this transformation leads to the formulation of a dynamic program that in principle can be used 
to solve the (finite-horizon) MA-POMDP. The CI approach has also led to the development of a multi-agent reinforcement learning (MARL) framework \cite{hsu22} where agents learn good compressions of common and private information that can suffice for approximate optimality. On the empirical front, a few worth-mentioning works include \cite{gupta17,nolan2020,rashid2020,rashid2020-2}.

The technical challenges increase even more so for MA-C-POMDPs where restriction of the policy-profile space to 
deterministic policy-profiles is no longer an option\footnote{Restricting to deterministic policies can be sub-optimal in SA-C-MDPs and SA-C-POMDPs: see \cite{altman-constrainedMDP} and \cite{dongho2011} for details.}. Thus, 
the coordinator in the equivalent SA-C-POMDP has an uncountable prescription space, which leads to an uncountable state-space in its equivalent SA-C-MDP. This is an issue because most fundamental results in the theory of SA-C-MDPs (largely based on occupation-measures) rely heavily on the state-space being countable; see \cite{altman-constrainedMDP}. Due to these reasons, the study of MA-C-POMDPs calls for a new methodology, one which avoids this transformation and directly studies the problem. Our work takes the first steps in this direction.

\subsection{Contributions}
We present the first rigorous approach for MA-C-POMDPs that is based on a principled theory of strong duality and using measure theoretic results. In particular,
\begin{itemize}
\item
In Theorem \ref{thm:strongduality}, we establish strong duality and existence of a saddle-point for a fairly general formulation of a MA-C-POMDP.
\item
In Section \ref{sec:history_embedding}, we propose a universal compression methodology similar to \cite{hsu22} that holds for any valid vector of Lagrange-multipliers.
\item
In Section \ref{sec:marl}, we combine strong duality with the proposed compression methodology to lay out the design of a primal-dual MARL framework based on centralized training and decentralized execution (CTDE) -- for developing close-to-optimal algorithms (using neural-networks to approximate any  functions).
\end{itemize}

\subsection{Organization}
The rest of the paper is organized as follows. A mathematical formulation of cooperative MA-C-POMDPs is introduced in Section \ref{sec:problem}. Strong duality results are derived in Section \ref{sec:strongduality} with details deferred to Appendices \ref{sec:appendix:intermediary_results} and \ref{sec:appendix:helpful_facts}. Computationally feasible algorithms based on compression of agents' histories to approximate information states are developed in Sections \ref{sec:history_embedding} and \ref{sec:marl}. Finally, concluding remarks are given in Section \ref{sec:conclusion}.

\subsection{Notation}
Before we present the model, we highlight the key notation in this paper.
\begin{itemize}
\item The sets of integers and positive integers are respectively denoted by $\mbb{Z}$ and $\mbb{N}$. For integers $a$ and $b$, $[a,b]_{\mbb{Z}}$ represents the set $\{a, a+1, \dots, b\}$ if $a\le b$ and $\emptyset$ otherwise. The notations [a] and $[a,\infty]_{\mbb{Z}}$ are used as shorthands for $[1, a]_{\mbb{Z}}$ and $\{a, a+1, \dots \}$, respectively.
\item For integers $a \le b$ and $c \le d$, and a quantity of interest $q$, $\un{q}{a:b}$ is a shorthand for the vector $\l( \un{q}{a}, \un{q}{a+1}, \dots, \un{q}{b} \r)$ while $\ut{q}{c:d}$ is a shorthand for the vector $\l( \ut{q}{c}, \ut{q}{c+1}, \dots, \ut{q}{d} \r)$. The combined notation $\utn{q}{a:b}{c:d}$ is a shorthand for the vector $(\utn{q}{i}{j}: i \in [a,b]_{\mbb{Z}}, j \in [c, d]_{\mbb{Z}})$. The infinite tuples $\l( \un{q}{a}, \un{q}{a+1}, \dots, \r)$ and $\l( \ut{q}{c}, \ut{q}{c+1}, \dots, \r)$ are respectively denoted by $ \un{q}{a:\infty}$ and $\ut{q}{c:\infty}$.
\item For two real-valued vectors $v_1$ and $v_2$, the inequalities $v_1 \le v_2$ and $v_1 < v_2$ are meant to be element-wise inequalities.
\item Probability and expectation operators are denoted by $\pr$ and $\mbb{E}$, respectively. Random variables are denoted by upper-case letters and their realizations by the corresponding lower-case letters.  At times, we also use the shorthand  $\mbb{E}\l[ \cdot | x \r] \defeq \mbb{E}\l[ \cdot | X = x \r]$ and $\pr\l( y | x \r) \defeq \pr\l( Y = y | X = x \r)$ for conditional quantities.
\item Whenever a conditional probability or conditional expectation is written, it is implicitly assumed that the conditioning event has non-zero probability.
\item Topological spaces are denoted by upper-case calligraphic letters. For a topological-space $\mcl{W}$, $\borel{\mcl{W}}$ denotes the Borel $\sigma$-algebra, measurability is determined with respect to $\borel{\mcl{W}}$, and $\m{\mcl{W}}$ denotes the set of all probability measures on $\borel{\mcl{W}}$ endowed with the topology of weak convergence. Also, unless stated otherwise, ``measure'' means a non-negative measure.
\item Unless otherwise stated, if a set $\mcl{W}$ is countable, as a topological space it will be assumed to have the discrete topology. Therefore, the corresponding Borel $\sigma$-algebra $\borel{\mcl{W}}$ will be the power-set $2^{\mcl{W}}$.
\item Unless stated otherwise, the product of a collection of topological spaces will be assumed to have the product topology.
\item The notation in Appendices \ref{sec:appendix:helpful_facts} and \ref{sec:appendix:minimax} is exclusive and should be read independent of the rest of the manuscript.
\item To aid the reading experience, we strongly encourage the reader to refer to important notation and definitions listed in Appendix \ref{sec:appendix:notation}.
\end{itemize}
\begin{figure}
    \centering
    \includegraphics[width=0.9\linewidth]{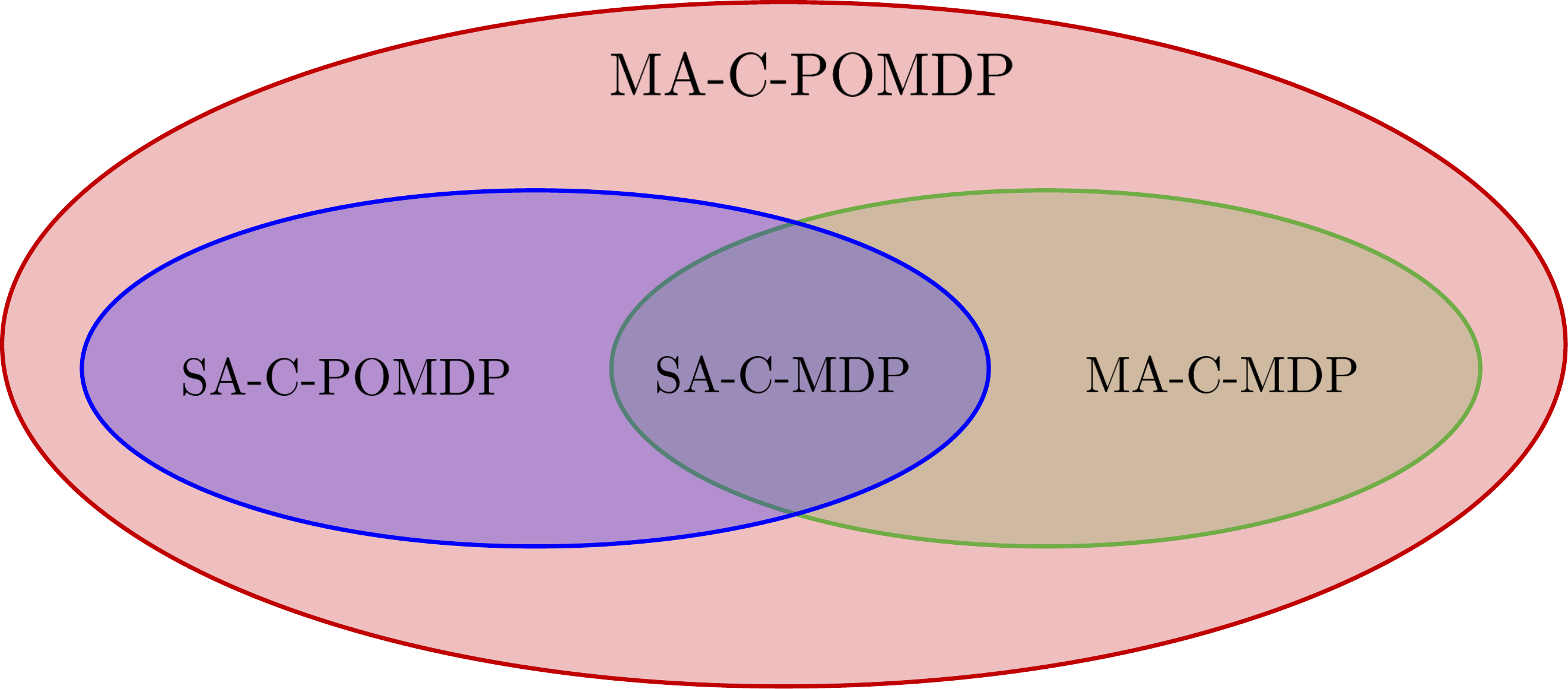}
    \caption{Relationships between Models of Team-based Sequential Decision-Making under Constraints.}
    \label{fig:model_relationships}
\end{figure}
\section{Model}\label{sec:problem}
Let $ \l( N, \sspace, \ospace, \aspace, \mcl{P}_{tr}, \l( c, d\r), P_{1}, \mcl{U}, \alpha \r)$
denote a (cooperative) MA-C-POMDP with $N$ 
agents, state space $\sspace$, joint-observation space $\ospace$, a joint-action space $\aspace$, transition-law $\mcl{P}_{tr}$, immediate-cost functions $c$ and $d$, (fixed) initial distribution $P_1$, space of decentralized policy-profiles $\mcl{U}$, and discount factor $\alpha\in\l(0, 1\r)$. The decision problem has the following attributes and notations.
\begin{enumerate}
\item \textbf{State Process}: 
The state-space $\sspace$ is some topological space with a Borel $\sigma$-algebra $\borel{\sspace}$. 
The state-process is denoted by $\l\{\Stt{t} \r\}_{t=1}^{\infty}$.
\item \textbf{Joint-Observation Process}: The joint-observation space $\ospace$ is a countable discrete space of the form $ \ospace = \prod_{n=0}^{N} \onspace{n}$, where $\onspace{0}$ denotes the common observation space of all agents and $\onspace{n}$ denotes the private observation space of agent $n \in [N]$. The joint-observation process is denoted by $\l\{ \Ot{t} \r\}_{t=1}^{\infty}$ where $\Ot{t} = \Otn{t}{0:N}$ and is such that at time $t$, agent $n\in [N]$ observes $\Otn{t}{0}$ and $\Otn{t}{n}$ only.

\item \textbf{Joint-Action Process}: The joint-action space $\aspace $ is a finite discrete space of the form $ \aspace  = \prod_{n=1}^{N} \anspace{n} $, where $\anspace{n}$ denotes the action space of agent $n\in [N]$. The joint-action process is denoted by $\l\{ \At{t} \r\}_{t=1}^{\infty}$ where $\At{t} = \Atn{t}{1:N}$ and $\Atn{t}{n}$ denotes the action of agent $n$ at time $t$.\footnote{The results in this work also hold if for every $( \hstn{h}{t}{0}, \hstn{h}{t}{n}) \in \hstnspace{t}{0}\times\hstnspace{t}{n}$, agent $n$ is allowed to take action from a separate finite discrete space $\anspace{n}(\hstn{h}{t}{0}, \hstn{h}{t}{n})$.} Since all $\anspace{n}$'s and $\aspace$ are finite, they are all compact metric spaces.\footnote{Hence, also complete and separable.}

\item \textbf{Transition-law}: 
At time $t \in \mbb{N}$, given the current state $\Stt{t}$ and current joint-action $\At{t}$, the next state $\Stt{t+1}$ and the next joint-observation $\Ot{t+1}$ are determined in a time-homogeneous manner independent of all previous states, all previous and current joint-observations, and all previous joint-actions. The transition-law is given by 
\begin{align*}
\mcl{P}_{tr} \defeq \l\{ P_{saBo}: s \in \sspace, a\in\aspace, B \in  \borel{\sspace}, o \in \ospace \r\},\numberthis\label{eq:transitionlaw}
\end{align*}
where for all $t \in \mbb{N}$, 
\begin{align}
\begin{split}\label{eq:psabo}
&\pr\l( \Stt{t+1} \in B , \Ot{t+1} = o | \Stt{1:t-1}  = \stt{1:t-1}, \r.\\
&\hspace{20pt} \l. \Ot{1:t} = \ot{1:t}, \At{1:t-1} = \at{1:t-1}, 
\Stt{t} = s, \At{t} = a \r)\\
&\hspace{10pt}
= \pr\l( \Stt{t+1} \in B, \Ot{t+1} = o | \Stt{t} = s, \At{t} = a \r)\\
&\hspace{10pt} 
\defeq P_{saBo}.
\end{split}
\end{align}

\item \textbf{Immediate-costs}: 
The immediate cost $c : \sspace \times \aspace \mapsto \mbb{R}$ is a real-valued function whose expected discounted aggregate (to be defined later) we would like to minimize. On the other hand, the immediate cost $d : \sspace \times \aspace \mapsto \mbb{R}^K$ is $\mbb{R}^K$-valued function whose expected discounted aggregate we would like to keep within a specified threshold. For these reasons, we call $c$ and $d$ as the immediate objective and constraint costs respectively. We make use of the following assumption on immediate costs.

\begin{assumption}[Bounded Immediate-costs]\label{assmp:boundedcosts}
\begin{enumerate}
\item[]
\item[(a)] Immediate objective cost is bounded from below, i.e., there exists $\udl{c} \in \mbb{R}$ such that
\begin{align*}
\udl{c} \le c(\cdot, \cdot). \numberthis\label{eq:cboundedbelow}
\end{align*}
\item[(b)] Immediate objective cost is bounded from above, i.e., there exists $\ov{c} \in \mbb{R}$ such that
\begin{align*}
c(\cdot, \cdot)  \le \ov{c}. \numberthis\label{eq:cboundedabove}
\end{align*}
\item[(c)] Immediate constraints costs are bounded both from above and below, i.e., there exist $\udl{d}, \ov{d} \in \mbb{R}^K$ such that
\begin{align*}
\udl{d} \le d(\cdot, \cdot) \le \ov{d}. \numberthis\label{eq:dbounded}
\end{align*}
\end{enumerate}
Let $ \ulbar{c} = |\udl{c}| \vee |\ov{c}|$ and $ \ulbar{d} = \| \udl{d} \|_{\infty} \vee  \| \ov{d} \|_{\infty}$ so that under (a)-(c), we have 
\begin{align*}
|c(\cdot, \cdot)| \le \ulbar{c} < \infty \text{ and } \| d(\cdot, \cdot) \|_{\infty} \le \ulbar{d} < \infty.
\end{align*}
\end{assumption}

\item \textbf{Initial Distribution}: $P_1$ is a (fixed) probability measure for the initial state and initial joint-observation, i.e., $P_1 \in \m{\sspace\times \ospace}$ and
\begin{align}
\begin{split}\label{eq:initialdistribution}
P_1\l( B, o \r) &\defeq \pr\l( \Stt{1} \in B, \Ot{1} = o \r).
\end{split}
\end{align}    

\item \textbf{Space of Policy-Profiles}: At time $t\in \mbb{N}$, the \emph{common history} of all agents is defined as all the common observations received thus far, i.e.,  $\Hstn{t}{0} \defeq \l( \Otn{1:t}{0}  \r)$. Similarly, the \emph{private history} of agent $n\in [N]$ at time $t$ is defined as all observations received and all the actions taken by the agent thus far (except for those that are part of the common information), i.e., 
\begin{align}
\begin{split}\label{eq:Hstn}
\Hstn{1}{n} &\defeq \Otn{1}{n} \setminus \Otn{1}{0}, \text{ and}\\
\Hstn{t}{n} &\defeq \l( \Hstn{t-1}{n}, (\Atn{t-1}{n}, \Otn{t}{n})\setminus \Otn{t}{0} \r) \ \forall t \in [2,\infty]_{\mbb{Z}}.
\end{split}
\end{align}
Finally, the \emph{joint-history} at time $t$ is defined as the tuple of the common history and all the private histories at time $t$, i.e., $\Hst{t}  \defeq \Hstn{t}{0:n}$. 

With the above setup, we define a (decentralized) behavioral policy-profile $u $ as a tuple $\un{u}{1:N} \in \uspace \defeq \prod_{n=1}^{N} \uspace^{(n)} $ where $\un{u}{n}$ denotes some behavioral policy used by agent $n$, i.e., $\un{u}{n}$ itself is a tuple of the form $ \utn{u}{1:\infty}{n}$ where $\utn{u}{t}{n}$ maps $\hstnspace{t}{0} \times \hstnspace{t}{n}$ to $\m{\anspace{n}}$, and where agent $n$ uses the distribution $\utn{u}{t}{n} ( \Hstn{t}{0}, \Hstn{t}{n} )$ to choose its action $\Atn{t}{n}$. We pause to emphasize that at any time $t$, each agent randomizes over its action-set independently of all other agents, that is, without any \textit{common randomness}. Thus, given a joint-history $\hst{h}{t} \in \hstspace{t}$ at time $t$, the probability that joint-action $\at{t} \in \aspace$ is taken is given by
\begin{align*}
\ut{u}{t}\l(\at{t}|\hst{h}{t}\r) = \ut{u}{t}\l(\hst{h}{t}\r)\l(\at{t} \r) 
&\defeq \prod_{n=1}^{N} \utn{u}{t}{n}\l( \hstn{h}{t}{0}, \hstn{h}{t}{n}  \r) \l( \atn{t}{n} \r)\\
&=\prod_{n=1}^{N} \utn{u}{t}{n}\l( \atn{t}{n} \big| \hstn{h}{t}{0}, \hstn{h}{t}{n}  \r).\numberthis\label{eq:uah}
\end{align*}
\begin{rem}	
With Assumption \ref{assmp:boundedcosts}(a) and \ref{assmp:boundedcosts}(c), the conditional expectations $\mbb{E}_{P_1} \l[ \cCost \mid \Hst{t} = \hst{h}{t}, \At{t} = \at{t} \r] $ and $\mbb{E}_{P_1} \l[ \dCost \mid \Hst{t} = \hst{h}{t}, \At{t} = \at{t} \r] $ exist, are unique, and are bounded from below. Furthermore, the latter are element-wise finite. 	
\end{rem}

\item \textbf{Optimization Problem}: Let $\prup{u}{P_1}$ be the probability measure corresponding to policy-profile $u\in\uspace$ and initial-distribution $P_1$ and let $\E{u}{P_1}$ denote the corresponding expectation operator.\footnote{The existence and uniqueness of $\prup{u}{P_1}$ can be ensured by an adaptation of the Ionesca-Tulcea theorem \cite{tulcea49}.} We define \textit{infinite-horizon expected total discounted costs} $C:\uspace\ra \mbb{R} \cup \{ \infty \}$ and $D:\uspace \ra \mbb{R}^K$ as 
\begin{align*}
\fullccosts{u} = \fullccost{u} &\defeq  \E{u}{P_1} \l[ \sum_{t=1}^{\infty} \alpha^{t-1} \cCost \r],
\numberthis\label{eq:C}\\
\text{and}\ 
\fulldcosts{u} = \fulldcost{u} &\defeq  \E{u}{P_1}\l[ \sum_{t=1}^{\infty} \alpha^{t-1} \dCost \r].\numberthis\label{eq:D}
\end{align*}
\begin{rem}\label{rem:real_valued_aggregatecosts}
Assumption \ref{assmp:boundedcosts}(a) (objective cost bounded from below) and \ref{assmp:boundedcosts}(c) (constraint costs bounded) ensures that $\fullccosts{u}\in \mbb{R} \cup \{ \infty \}$, and $\fulldcosts{u} \in \mbb{R}^K$ with (absolute) element-wise bound $\ulbar{d}/(1-\alpha)$.
\end{rem}
The decision process proceeds as follows: \textit{i}) At time $t\in\mbb{N}$, the current state $\Stt{t}$ and observations $\Ot{t}$ are generated (according to $\mcl{P}_{tr}$ and/or $P_1$); \textit{ii}) Each agent $n\in[N]$ chooses an action $\an{n} \in \anspace{n}$ based on $\Hstn{t}{0}, \Hstn{t}{n}$; \textit{iii}) the immediate-costs $c\l( \Stt{t}, \At{t} \r), d\l(\Stt{t}, \At{t}\r)$ are incurred\footnote{In the planning context, the immediate-costs are known by all agents. In the learning context, we assume that the immediate-costs are observed by all agents. This assumption is innocuous for CTDE based learning algorithms where all quantities are known by a central entity, which we will refer to as the \emph{supervisor} in this work. See Section \ref{sec:marl}.}; \textit{iv}) The system moves to the next state and observations according to the transition-law $\mcl{P}_{tr}$.
\begin{figure}
    \centering
    \includegraphics[width=\linewidth]{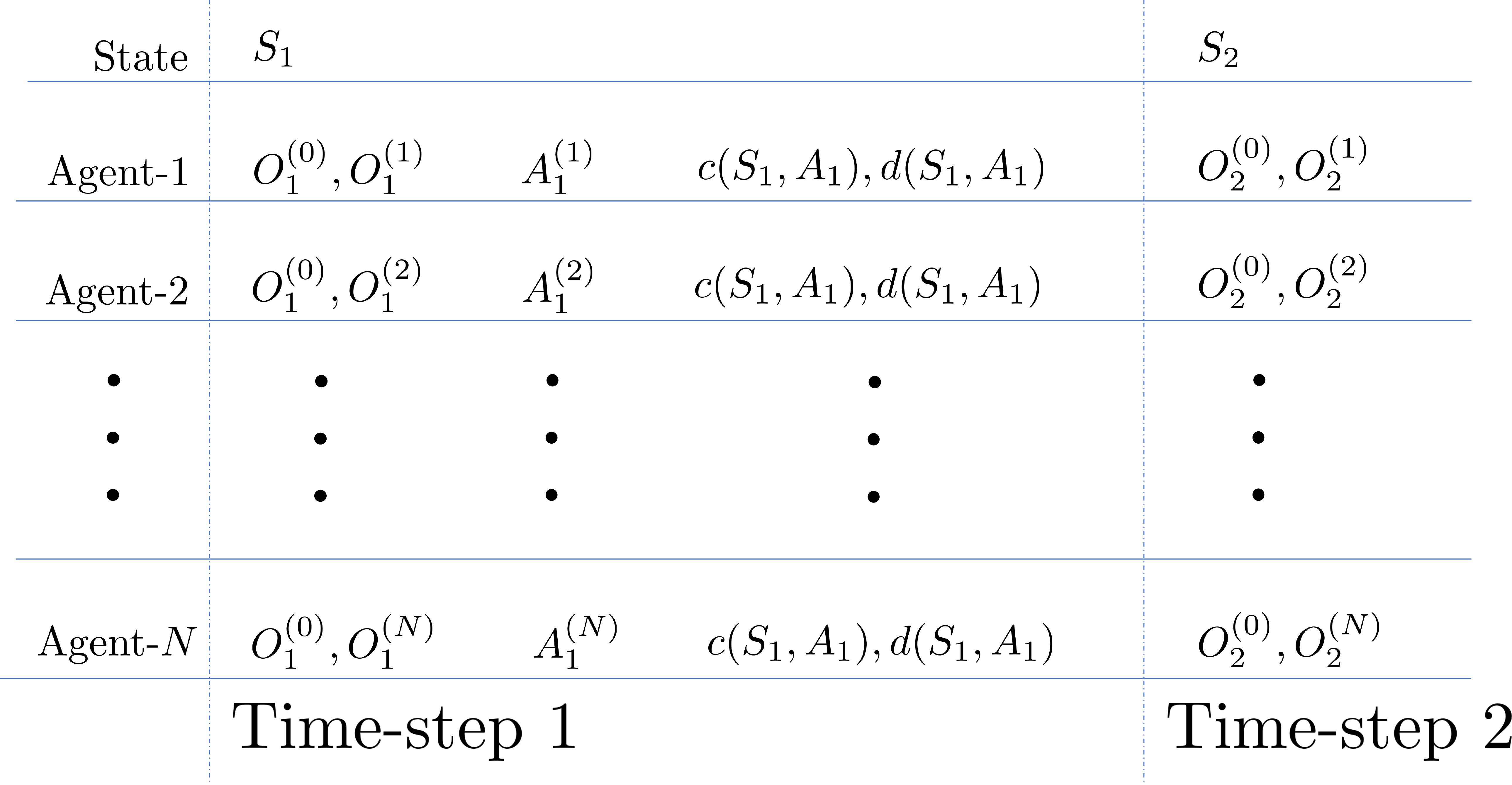}
    \caption{Relationships between Models of Team-based Sequential Decision-Making under Constraints.}
    \label{fig:timeline}
\end{figure}
Under these rules, the goal of the agents is to work cooperatively to solve the following constrained optimization problem.
\begin{equation}\tag{\textit{MA-C-POMDP}}\label{eq:macpomdp}
\begin{split}
&\text{minimize } \fullccosts{u}\\
&\hspace{10pt} \text{subject to } u \in \uspace \text{ and } \fulldcosts{u} \le \constraintv.
\end{split}
\end{equation}
Here, $\constraintv$ is a fixed $K$-dimensional real-valued vector. We refer to the solution of \eqref{eq:macpomdp} as its \emph{optimal value} and denote it by $\optcosts = \optcost$. In particular, if the set of feasible policy-profiles is empty, we set $\optcosts$ to $\infty$ and with slight abuse of terminology will consider any policy-profile in $\uspace$ to be optimal.

The following assumption about feasibility of \eqref{eq:macpomdp} will be used in the paper.
\begin{assumption}[Slater's Condition]\label{assmp:slatercondition}
There exists a policy-profile $\ov{u} \in \uspace$ and $\zeta > 0$ for which
\begin{align*}
\fulldcosts{\ov{u}} \le \constraintv - \zeta1.\numberthis\label{eq:slatercondition} 
\end{align*}
\end{assumption}

\end{enumerate} 
\section{Characterization of Strong Duality}\label{sec:strongduality}

To solve \eqref{eq:macpomdp}, let us define the Lagrangian function $L : \uspace\times \mcl{Y} \mapsto \mbb{R} \cup \{\infty\}$ as follows.
\begin{align*}
\lags{u}{\lambda} &= \lag{u}{\lambda} 
\defeq \fullccosts{u} + \dotp{\lambda}{\fulldcosts{u} -\constraintv}\\
&= \fullccosts{u} + \sum_{k=1}^{K} \lambda_k \l(D_{k}\l(u\r) - \constraintv_k \r)
,\numberthis\label{eq:lagrangian}
\end{align*}
Here, $\mcl{Y} \defeq \{ \lambda \in \mbb{R}^K : \lambda \ge 0\}$ is the set of tuples of $K$ non-negative real-numbers, each commonly known as a Lagrange-multiplier. Our first result shows that the the solution $\udl{C}$ satisfies
\begin{align*}
\optcosts &= \infsup{u\in \uspace }{\lambda\in \mcl{Y}} \lags{u}{\lambda}
,\numberthis\label{eq:optccost:infsup}
\end{align*}
and that the inf and sup can be interchanged, i.e.,
\begin{align*}
\optcosts &= \supinf{\lambda\in \mcl{Y}}{u\in \uspace } \lags{u}{\lambda}
.\numberthis\label{eq:optccost:supinf}
\end{align*}

\begin{thm}[Strong Duality and Existence of Saddle Point]\label{thm:strongduality}
Under Assumptions \ref{assmp:boundedcosts}(a) and \ref{assmp:boundedcosts}(c), the following statements hold.
\begin{enumerate}
\item[(a)] The optimal value satisfies 
\begin{align*}
\optcosts = \infsup{u\in\uspace}{\lambda\in \mcl{Y}} \lags{u}{\lambda}
.\numberthis   
\end{align*}
\item[(b)] A policy-profile $u^\star \in \uspace$ is optimal if and only if $\optcosts = \sup_{\lambda\in \mcl{Y}} \lags{u^\star}{\lambda}$.
\item[(c)] Strong duality holds for \eqref{eq:macpomdp}, i.e.,
\begin{align*}
\optcosts &= \infsup{u\in\uspace}{\lambda\in \mcl{Y}} \lags{u}{\lambda} 
= \supinf{\lambda\in \mcl{Y}}{u\in\uspace} \lags{u}{\lambda}.\numberthis
\end{align*}
Moreover, there exists a $u^\star \in \uspace$ such that $\optcosts = \sup_{\lambda \in \mcl{Y}} \lags{u^\star}{\lambda} $ and $u^\star$ is optimal for \eqref{eq:macpomdp}. 
\item[(d)] If Assumption \ref{assmp:slatercondition} holds, then there also exists $\lambda^\star \in \mcl{Y}$ such that the following saddle-point condition holds for all $(u,\lambda)\in \uspace \times \mcl{Y}$,
\begin{align*}
\lags{u^\star}{\lambda} \le \lags{u^\star}{\lambda^\star} = \optcosts \le \lags{u}{\lambda^\star}.
\numberthis\label{eq:saddlepointconditions} 
\end{align*}
i.e., $u^\star$ minimizes $\lags{\cdot}{\lambda^\star}$ and $\lambda^\star$ maximizes $\lags{u^\star}{\cdot}$. In addition to this, the primal dual pair $\l( u^\star, \lambda^\star \r)$ satisfies the complementary-slackness condition:
\begin{align*}\label{eq:compslack}
\dotp{\lambda^\star}{\fulldcosts{u^\star}-\constraintv} = 0.\numberthis
\end{align*}
\end{enumerate}
\end{thm}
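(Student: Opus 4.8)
The plan is to treat (a) and (b) as direct consequences of the definition of the Lagrangian and to concentrate the work in (c) and (d), where a compactness/semicontinuity argument is combined with a minimax theorem and, for (d), with Slater's condition.

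\emph{Parts (a) and (b).} For a fixed $u\in\uspace$: if $\fulldcosts{u}\le\constraintv$ then $\dotp{\lambda}{\fulldcosts{u}-\constraintv}\le 0$ for every $\lambda\in\mcl{Y}$, with equality at $\lambda=0$, so $\sup_{\lambda\in\mcl{Y}}\lags{u}{\lambda}=\fullccosts{u}$; if some coordinate of $\fulldcosts{u}-\constraintv$ is strictly positive, sending the matching coordinate of $\lambda$ to $+\infty$ gives $\sup_{\lambda\in\mcl{Y}}\lags{u}{\lambda}=+\infty$. Thus $\sup_\lambda\lags{u}{\lambda}$ equals $\fullccosts{u}$ on the feasible set and $+\infty$ off it, and minimizing over $u$ returns $\optcosts$ (including the convention when no $u$ is feasible); this is (a). Part (b) follows at once, since $u^\star$ is optimal precisely when it is feasible with $\fullccosts{u^\star}=\optcosts$, i.e.\ precisely when $\sup_\lambda\lags{u^\star}{\lambda}=\optcosts$.

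\emph{Part (c).} First topologize the policy space: since each $\anspace{n}$ is finite, each $\m{\anspace{n}}$ is a compact, convex, metrizable simplex, and since the common and private observation spaces are countable, $\uspace$ is a \emph{countable} product of such simplices; by Tychonoff's theorem it is compact, and, being a countable product of metric spaces, it is metrizable and convex, so sequences suffice throughout. Next show that $\fullccosts{\cdot}$ is lower semicontinuous and $\fulldcosts{\cdot}$ is continuous: a horizon-$T$ truncation of either functional is a countable sum over observation/action histories whose summands are products of finitely many coordinates of $u$ against bounded coefficients; after subtracting the lower bound $\udl{c}$ the objective summands are nonnegative, so each truncation is lsc (a supremum of continuous partial sums), and since $c\ge\udl{c}$ the truncations plus the constants $\udl{c}\,\alpha^{T}/(1-\alpha)$ increase to $\fullccosts{\cdot}$, which is thus lsc; because $d$ is bounded, the truncations of $\fulldcosts{\cdot}$ are continuous and converge uniformly, so $\fulldcosts{\cdot}$ is continuous. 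Consequently $u\mapsto\lags{u}{\lambda}$ is lsc for each $\lambda$, hence $u\mapsto\sup_\lambda\lags{u}{\lambda}$ is lsc, and on the compact set $\uspace$ it attains its infimum — which is $\optcosts$ by (a) — at some $u^\star$, optimal by (a)--(b). The remaining interchange of $\inf$ and $\sup$ uses that $\lambda\mapsto\lags{u}{\lambda}$ is affine (hence concave and continuous) on the convex set $\mcl{Y}$ together with the convex structure of the value on the policy side (passing, if needed, through a convexified class of policy-profiles); the precise tool is the minimax theorem of Appendix~\ref{sec:appendix:minimax}, tailored to allow the value $+\infty$ for $\fullccosts{\cdot}$ and to handle the noncompact $\mcl{Y}$, and it yields $\optcosts=\inf_u\sup_\lambda\lags{u}{\lambda}=\sup_\lambda\inf_u\lags{u}{\lambda}$.

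\emph{Part (d), and the main obstacle.} Under Slater's condition the dual optimum is attained: the value-perturbation function $z\mapsto\inf\{\fullccosts{u}:u\in\uspace,\ \fulldcosts{u}-\constraintv\le z\}$ is convex and, by Assumption~\ref{assmp:slatercondition}, finite on a neighborhood of the origin, hence subdifferentiable there; because it is nonincreasing, a subgradient has the form $-\lambda^\star$ with $\lambda^\star\in\mcl{Y}$, and a short computation identifies $\inf_u\lags{u}{\lambda^\star}$ with the dual value $\optcosts$ from (c). Together with the minimizer $u^\star$ of (c) this gives $\lags{u^\star}{\lambda}\le\optcosts=\lags{u^\star}{\lambda^\star}\le\lags{u}{\lambda^\star}$ for all $(u,\lambda)$, i.e.\ the saddle-point condition; setting $\lambda=0$ in $\lags{u^\star}{\lambda}\le\lags{u^\star}{\lambda^\star}$ forces $\dotp{\lambda^\star}{\fulldcosts{u^\star}-\constraintv}\ge 0$, while feasibility of $u^\star$ and $\lambda^\star\ge 0$ force the reverse inequality, giving complementary slackness. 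I expect the $\inf$--$\sup$ interchange in (c) to be the hardest step: the cost functionals are only multilinear, not convex, in the behavioral-policy coordinates, so the argument must recover the needed convexity on the policy side, and the minimax step must be robust to the extended value $+\infty$ for $\fullccosts{\cdot}$ and to the unboundedness of $\mcl{Y}$ — which is exactly why the textbook Sion theorem cannot be invoked off the shelf; the finite-horizon truncations behind semicontinuity are routine but need care because $\sspace$ may be uncountable.
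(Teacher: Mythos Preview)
Your handling of (a) and (b) matches the paper, and your compactness/lsc setup for $\uspace$ is essentially Lemmas~\ref{lem:lsc} and~\ref{lem:puth}; obtaining the primal minimizer $u^\star$ by Weierstrass is fine.

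The real gap is exactly where you locate it: the $\inf$--$\sup$ interchange in (c). Proposition~\ref{prop:sionminimax} needs $L(\cdot,\lambda)$ convex on a convex domain, and on $\uspace$ the Lagrangian is only multilinear in the per-agent, per-history coordinates; your parenthetical ``passing, if needed, through a convexified class of policy-profiles'' names the idea but not the class, and does not show that the passage preserves both $\inf\sup$ and $\sup\inf$. The paper's specific fix has three pieces: (i) enlarge $\uspace$ to $\uspacemix=\prod_{n}\m{\uspacen{n}}$, the \emph{decentralized} mixtures in which each agent independently draws its own behavioral policy, and extend $L$ to $\wh{L}(\mu,\lambda)=\int L(u,\lambda)\,\mu(du)$; (ii) prove a Kuhn-type equivalence (Lemma~\ref{lem:dominance}, Corollary~\ref{cor:lbar_and_l}) showing every $\mu\in\uspacemix$ is replicated in law by some $u\in\uspace$, so that replacing $\uspace$ by $\uspacemix$ changes neither $\inf\sup$ nor $\sup\inf$; (iii) upgrade lower semicontinuity from $L$ on $\uspace$ to $\wh{L}$ on $\uspacemix$ (Lemma~\ref{lem:lsc2}, via Proposition~\ref{prop:lsc} on integrals of lsc functions under weak convergence), and only then invoke the minimax theorem. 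Step (ii) is the crux and is not free---convexifying too far, say to all of $\m{\uspace}$ with \emph{correlated} mixing across agents, strictly enlarges the feasible set and can lower $\inf\sup$, so the transfer back to $\uspace$ would fail. Step (iii) is also separate work: lsc of $L$ on $\uspace$, which you established, does not by itself give lsc of $\mu\mapsto\int L\,d\mu$ under weak convergence of measures. Finally, your route to (d) via subdifferentiability of the perturbation function presupposes its convexity, which is again the same missing convex structure on the policy side; once (c) is in hand, (d) follows more directly from concavity, upper semicontinuity, and coercivity (under Slater) of the dual function $g(\lambda)=\inf_{u}L(u,\lambda)$.
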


\begin{proof}
\begin{enumerate}
\item[(a)] If $u \in \uspace$ is feasible (i.e., it satisfies $\fulldcosts{u} \le \constraintv$), then the $\sup$ is obtained by choosing $\lambda = 0$, so
\begin{align*}
\sup_{\lambda\in\mcl{Y}} \lags{u}{\lambda} &= \fullccosts{u}.
\numberthis\label{eq:ufeasible}
\end{align*}
If $u \in \uspace$ is not feasible, then 
\begin{align*}
\sup_{\lambda\in \mcl{Y}} \lags{u}{\lambda} = \infty.
\numberthis\label{eq:unotfeasible}
\end{align*}
Indeed, suppose WLOG that the $k^{th}$ constraint is violated, i.e., $\fulldkcosts{k}{u} > \constraintv_k$, then $\infty$ can be obtained by choosing $\lambda_k$ arbitrarily large and setting other $\lambda_k$'s to 0). From \eqref{eq:ufeasible}, \eqref{eq:unotfeasible}, and our convention that $\optcosts = \infty$ whenever the feasible-set is empty, it follows that
\begin{align*}
\optcosts = \infsup{u\in\uspace}{\lambda\in\mcl{Y} } \lags{u}{\lambda}.
\numberthis\label{eq:fullccostisinfsup}
\end{align*}

\item[(b)] By our convention on the value of $\optcosts$ (when there is no feasible policy-profile), $u^\star$ is optimal if and only if $\fullccosts{u^\star} = \optcosts $, i.e., $\sup_{\lambda\in \mcl{Y} } \lags{u^\star}{\lambda} = \optcosts$.

\item[(c)] To establish strong duality, we use a Minimax Theorem (see Proposition \ref{prop:sionminimax}) which requires $\uspace$ and $\mcl{Y}$ to be convex\footnote{Convexity is a set property rather than a topological property. In the rest of the paper, by a ``convex topological space'', we mean convexity of the set on which the topology is defined.} topological spaces (with $\uspace$ being compact also). It is clear that $\mcl{Y}$ is convex and we can endow it with the usual subspace topology of $\mbb{R}^K$. 
For $\uspace$ however, we need to endow it with a suitable topology in which it is both convex and compact. To achieve compactness, we can use the finiteness of joint-action space $\aspace$ and the countability of joint-observation space $\ospace$ to associate $\uspace$ with a product of compact sets that are parameterized by (countable number of) all possible histories. Tychonoff's theorem (see Proposition \ref{prop:tychonoff}) then helps establish compactness under the product topology. 
(Convexity holds trivially). 
Now, we make this idea precise. For $t \in \mbb{N}$ and $n\in[0,N]_{\mbb{Z}}$, let $\hstnspace{t}{n}$ denote the set of all possible realizations of $\Hstn{t}{n}$. Then, by countability of observation and action spaces, the sets
\begin{align}
\begin{split}\label{eq:hthnandh}
\hstspace{t} &\defeq \prod_{n=0}^{N} \hstnspace{t}{n},\\
\hsnspace{n} &\defeq \bigcup_{t= 1}^{\infty} \hstnspace{t}{0} \times \hstnspace{t}{n}, \text{ and }\\
\hsspace &\defeq \bigcup_{t=1}^{\infty} \hstspace{t},
\end{split}
\end{align}
are countable. Here, $\hstspace{t}$ is the set of all possible joint-histories at time $t$, $\hsnspace{n}$ is the set of all possible histories of agent $n$, and $\hsspace$ is the set of all possible joint-histories. With this in mind, one observes that $\uspace$ is in one-to-one correspondence with the set $\xuspace \defeq \prod_{n=1}^{N} \xuspacen{n}$, where
\begin{align*}
\xuspacen{n} \defeq \prod_{h \in \hsnspace{n}} \m{\anspace{n}; h},\numberthis\label{eq:xuspace}
\end{align*}
and $\m{\anspace{n}; h}$ is a copy of $\m{\anspace{n}}$ dedicated for agent-$n$'s history $h$. For example, a given policy $u$ would correspond to a point $x\in \xuspace$ such that $x_{n, \l( \hstn{h}{t}{0}, \hstn{h}{t}{n}\r) } = \utn{u}{t}{n} \l( \cdot | \hstn{h}{t}{0}, \hstn{h}{t}{n} \r) $, and similarly, vice versa. 

\hspace{5pt} Since $\anspace{n}$ is a complete separable (compact) metric space, by Prokhorov's Theorem (see Proposition \ref{prop:prokhorov}), each $\m{\anspace{n}; h}$ is a compact (and convex\footnote{Convexity of $\m{\anspace{n}}$ is trivial.}) metric space (with the topology of weak-convergence). Therefore, endowing $\xuspacen{n}$ and $\xuspace$ with the product topology makes each a compact (and convex) metric space via Tychonoff's theorem (see Proposition \ref{prop:tychonoff}), which is also metrizable (via Proposition \ref{prop:metrizability}). Given the one-to-one correspondence, \textbf{from now onward, we assume that $\uspacen{n}$ and $\uspace$ have the same topology as that of $\xuspacen{n}$ and $\xuspace$ respectively}. Henceforth, we will consider $C$, $D_k$, and $L$ as functions on topological spaces. Furthermore, since  $\uspacen{n}$'s and $\uspace$ have been shown to be compact metric spaces (hence, also complete and separable), we can also define $\borel{\uspacen{n}}$, $\borel{\uspace} = \otimes_{n=1}^{N} \borel{\uspacen{n}}$\footnote{For separable metric spaces $\mcl{W}_1, \mcl{W}_2, \ldots$, $\borel{\mcl{W}_1 \times \mcl{W}_2 \times \ldots } = \borel{\mcl{W}_1} \otimes \borel{\mcl{W}_2} \otimes \ldots$. See \cite{kallenberg2002foundations}[Lemma 1.2].}, and $\m{\uspace}$, where $\m{\uspace}$ is compact (and convex) metrizable space by Prokhorov's theorem (see Proposition \ref{prop:prokhorov}).

\hspace{5pt} To establish part (c), it will be helpful to work with (decentralized) mixtures of behavioral policy-profiles -- wherein each agent first uses a measure $\mun{n} \in \m{\uspacen{n}} $ to choose its policy-profile $\un{u}{n}$ and then proceeds with it from time 1 onward. We denote this set of mixtures by $\uspacemix \defeq \prod_{n=1}^{N} \m{\uspacen{n}}$, whose typical element, denoted by $\mu \defeq \mymathop{\times}_{n=1}^{N} \mun{n} $, is a factorized measure on $\uspace$, i.e., $\mun{n}\in \m{\uspacen{n}}$. Since $\uspacemix \subseteq \m{\uspace}$, we endow it with the same metric as that of $\m{\uspace}$. Now, we can extend the definitions of $C$, $D$, and $L$ to $\wh{C} : \uspacemix \ra \mbb{R} \cup \{\infty\}$, $\wh{D}: \uspacemix \ra \mbb{R}^K$, and $\wh{L}: \uspacemix \times \mcl{Y} \ra \mbb{R} \cup \{\infty\}$ as follows:		
\begin{align}		
\begin{split}\label{eq:lagrangianmix}		
\wh{C} (\mu) &= \wh{C}_{P_1}(\mu) \defeq \E{\mu}{P_1} \l[ \sum_{t=1}^{\infty} \alpha^{t-1} c(S_t, A_t) \r], \\		
\wh{D} (\mu) &= \wh{D}_{P_1}(\mu) \defeq \E{\mu}{P_1} \l[ \sum_{t=1}^{\infty} \alpha^{t-1} d(S_t, A_t) \r], \text{ and }\\		
\lagsmix{\mu}{\lambda} &= \lagmix{\mu}{\lambda} = \wh{C}(\mu) + \dotp{\lambda}{\wh{D}(u)}.		
\end{split}		
\end{align}
In Lemma \ref{lem:dominance}, it is shown that any $\mu \in \uspacemix$ can be replicated by a behavioral policy-policy $u \in \uspace$. Corollary \ref{cor:lbar_and_l} then shows that
\begin{align}
\begin{split}\label{eq:lag_and_lagmix}
\infsup{u\in\uspace}{\lambda\in\mcl{Y}} \lags{u}{\lambda} &= \infsup{\mu \in\uspacemix}{\lambda\in\mcl{Y}} \lagsmix{\mu}{\lambda}, \text{ and } \\
\supinf{\lambda\in\mcl{Y}}{u\in\uspace} \lags{u}{\lambda} &= \supinf{\lambda\in\mcl{Y}}{u\in\uspacemix} \lagsmix{\mu}{\lambda}.
\end{split}
\end{align}
In light of \eqref{eq:lag_and_lagmix}, it suffices to prove part (c) for $\wh{L}$. By definition, $\wh{L}$ is affine and thus trivially concave in $\lambda$. Proposition \ref{prop:integral_linearity} implies that $\wh{L}$ is convex in $\mu$ and Lemma \ref{lem:lsc2} shows that $\wh{L}$ is lower semi-continuous\footnote{For definition of lower semi-continuity, see Definition \ref{dfn:lsc}.} in $\mu$. From Proposition \ref{prop:sionminimax}, it then follows that
\begin{align*}
\infsup{u\in\uspacemix}{\lambda\in \mcl{Y}} \lagsmix{u}{\lambda} = \supinf{\lambda\in \mcl{Y}}{\uspacemix} \lagsmix{u}{\lambda},
\end{align*}
and that there exists $\mu^\star \in \uspacemix$ such that
\begin{align*}	
\sup_{\lambda\in \mcl{Y}} \lagsmix{\mu^\star}{\lambda} = \infsup{u\in\uspacemix}{\lambda\in \mcl{Y}} \lagsmix{u}{\lambda}.	
\end{align*}
The optimality of $\mu^\star$ is implied by parts (b) and (a).
\item[(d)] This follows from Lagrange-multiplier theory.
\end{enumerate}
This concludes the proof.
\end{proof}

\begin{lem}[Lower Semi-Continuity of $\wh{L}$ on $\uspacemix$]\label{lem:lsc2}
Under Assumptions \ref{assmp:boundedcosts}(a) and \ref{assmp:boundedcosts}(c), $\wh{L}$ is lower semi-continuous on $\uspacemix$.
\end{lem}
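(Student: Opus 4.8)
The plan is to reduce the assertion to lower semi‑continuity of $\wh{C}$ alone, and then to obtain that by writing $\wh{C}$ as an increasing pointwise supremum of \emph{continuous} functions on $\uspacemix$; the continuity of the finite‑horizon costs on $\uspace$, transported to $\uspacemix$ through weak convergence, will be the technical core. Throughout, fix $\lambda \in \mcl{Y}$; it suffices to show that $\mu \mapsto \lagsmix{\mu}{\lambda} = \wh{C}(\mu) + \dotp{\lambda}{\wh{D}(\mu) - \constraintv}$ is lower semi‑continuous on $\uspacemix$.

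First I would isolate the following \textbf{Claim}: for every bounded measurable $g : \sspace \times \aspace \ra \mbb{R}$, the map $\mu \mapsto \E{\mu}{P_1}\big[ \sum_{t=1}^{\infty} \alpha^{t-1} g(\Stt{t}, \At{t}) \big]$ is continuous on $\uspacemix$. Granting it: each component $d_k$ of $d$ is bounded (Assumption \ref{assmp:boundedcosts}(c)), so $\mu \mapsto \wh{D}_k(\mu)$ is continuous, hence so is $\mu \mapsto \dotp{\lambda}{\wh{D}(\mu) - \constraintv}$. For $\wh{C}$, put $c_M \defeq c \wedge M$; for integers $M \ge \udl{c}$ one has $\udl{c} \le c_M \le M$ by Assumption \ref{assmp:boundedcosts}(a), and $(c_M - \udl{c}) \uparrow (c - \udl{c})$ pointwise as $M \ra \infty$. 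By the Claim $\wh{C}_M(\mu) \defeq \E{\mu}{P_1}\big[ \sum_{t=1}^{\infty} \alpha^{t-1} c_M(\Stt{t}, \At{t}) \big]$ is continuous on $\uspacemix$, and the Monotone Convergence Theorem applied to the nonnegative integrands $\sum_{t=1}^{\infty} \alpha^{t-1}(c_M(\Stt{t}, \At{t}) - \udl{c})$ gives $\wh{C}_M(\mu) \uparrow \wh{C}(\mu)$ for every $\mu$. Hence $\wh{C} = \sup_M \wh{C}_M$ is a pointwise supremum of continuous functions, therefore lower semi‑continuous, and $\lagsmix{\cdot}{\lambda}$, being the sum of a lower semi‑continuous function and a continuous function, is lower semi‑continuous.

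Next I would prove the Claim. By conditioning on the policy‑profile sampled according to $\mu$ (cf. Proposition \ref{prop:integral_linearity}), $\E{\mu}{P_1}\big[ \sum_{t=1}^{\infty} \alpha^{t-1} g(\Stt{t}, \At{t}) \big] = \int_{\uspace} G(u)\, d\mu(u)$ where $G(u) \defeq \E{u}{P_1}\big[ \sum_{t=1}^{\infty} \alpha^{t-1} g(\Stt{t}, \At{t}) \big]$. Since $\uspacemix$ carries the subspace topology of $\m{\uspace}$ and $G$ is bounded by $\|g\|_{\infty}/(1-\alpha)$, by the definition of weak convergence it suffices to show $G$ is continuous on $\uspace$. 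As $\sup_{u \in \uspace} \big| G(u) - \sum_{t=1}^{T} \alpha^{t-1} \E{u}{P_1}[g(\Stt{t}, \At{t})] \big| \le \|g\|_{\infty} \alpha^{T}/(1-\alpha) \ra 0$, $G$ is a uniform limit of finite sums, so it is enough that $u \mapsto \E{u}{P_1}[g(\Stt{t}, \At{t})]$ be continuous on $\uspace$ for each fixed $t$. Partitioning the sample space by the (countably many) values of $(\Ot{1:t}, \At{1:t})$, factoring $\prup{u}{P_1}$ into policy factors and nature factors (the latter determined by $P_1$ and $\mcl{P}_{tr}$, cf. \eqref{eq:psabo}), and using that the conditional law of $\Stt{t}$ given $(\Ot{1:t}, \At{1:t-1})$ is determined by $P_1$ and $\mcl{P}_{tr}$ alone, one obtains
\begin{align*}
\E{u}{P_1}\big[ g(\Stt{t}, \At{t}) \big] = \sum_{o_{1:t} \in \ospace^{t},\; a_{1:t} \in \aspace^{t}} \Big[ \prod_{s=1}^{t} \ut{u}{s}\big( a_s \mid h_s(o_{1:s}, a_{1:s-1}) \big) \Big]\; W\big( o_{1:t}, a_{1:t} \big),
\end{align*}
where $W(o_{1:t}, a_{1:t})$ bundles the likelihood of the observation path $o_{1:t}$ given $a_{1:t-1}$ with the conditional expectation $\int g(s_t, a_t)\, \pr(ds_t \mid o_{1:t}, a_{1:t-1})$ and is independent of $u$. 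Each summand is continuous in $u$: by \eqref{eq:uah}, $\ut{u}{s}(a_s \mid h_s) = \prod_{n=1}^{N} \utn{u}{s}{n}(a_s^{(n)} \mid h_s^{(0)}, h_s^{(n)})$, and under the identification of $\uspace$ with $\xuspace$ from \eqref{eq:xuspace} each factor $\utn{u}{s}{n}(a_s^{(n)} \mid h_s^{(0)}, h_s^{(n)})$ is the evaluation at the singleton $\{a_s^{(n)}\}$ of one coordinate of $x \in \xuspace$; coordinate projections are continuous in the product topology, and since $\anspace{n}$ is finite and discrete, $\nu \mapsto \nu(\{a_s^{(n)}\})$ is continuous on $\m{\anspace{n}}$ (as $\11_{\{a_s^{(n)}\}}$ is bounded and continuous). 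So each summand, a finite product of continuous functions times the constant $W$, is continuous on $\uspace$. Finally $\prod_{s=1}^{t} \ut{u}{s}(a_s \mid h_s) \le 1$ for all $u$ and, for each fixed $a_{1:t-1}$, the observation‑path likelihoods sum to $1$ over $o_{1:t}$, so
\begin{align*}
\sum_{o_{1:t},\, a_{1:t}}\ \sup_{u \in \uspace} \Big| \Big[ \prod_{s=1}^{t} \ut{u}{s}\big( a_s \mid h_s \big) \Big]\, W\big( o_{1:t}, a_{1:t} \big) \Big| \ \le\ \|g\|_{\infty}\, |\aspace|^{t} \ <\ \infty .
\end{align*}
By the Weierstrass $M$‑test the series converges uniformly on $\uspace$, and a uniform limit of continuous functions is continuous; hence $u \mapsto \E{u}{P_1}[g(\Stt{t}, \At{t})]$ is continuous on $\uspace$, which proves the Claim and the lemma.

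The main obstacle is the continuity of the finite‑horizon cost on the infinite‑product topology of $\uspace$ when $c$ and $d$ carry no regularity in the state variable: one cannot apply weak convergence against $g(\Stt{t}, \cdot)$ directly, and must instead decompose along the countable family of observation‑action paths so that $u$ enters each term through only finitely many coordinates, continuously (this is where finiteness and discreteness of the action spaces are used), while controlling the tail of the series uniformly in $u$ (this is where finiteness of $\aspace$ and the normalization of the observation‑path likelihoods enter). Transporting continuity from $\uspace$ to $\uspacemix$ via weak convergence and absorbing the unbounded‑above objective cost $c$ by monotone truncation are comparatively routine.
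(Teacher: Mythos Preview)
Your proof is correct, and it follows a route that is close in spirit to the paper's but organized differently. The paper first proves that $L(\cdot,\lambda)$ is lower semi-continuous on $\uspace$ (its Lemma~\ref{lem:lsc}, via Fatou's lemma together with the convergence of path probabilities in Lemma~\ref{lem:puth}), and then invokes a general transport result (Proposition~\ref{prop:lsc}): if $f$ is lower semi-continuous and bounded below on a Polish space, then $\mu\mapsto\int f\,d\mu$ is lower semi-continuous on $\m{\mcl{X}}$. That general result in turn rests on Baire's characterization of lower semi-continuous functions (Proposition~\ref{prop:lsc3}).

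You instead establish the stronger fact that for \emph{bounded} $g$ the map $u\mapsto \E{u}{P_1}\big[\sum_t \alpha^{t-1}g(\Stt{t},\At{t})\big]$ is \emph{continuous} on $\uspace$ (via the Weierstrass $M$-test over observation--action paths, exploiting finiteness of $\aspace$), transfer continuity to $\uspacemix$ directly from the definition of weak convergence, and finally absorb the possibly unbounded-above objective $c$ by monotone truncation $c_M\uparrow c$. This is more elementary in that it bypasses Propositions~\ref{prop:lsc} and~\ref{prop:lsc3}, and it yields as a byproduct that $\wh{D}$ is actually continuous (not merely lower semi-continuous) on $\uspacemix$. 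The paper's route, on the other hand, isolates a reusable general lemma and handles the unboundedness of $c$ once, at the level of $\uspace$, rather than via truncation. The technical core---continuity in $u$ of the joint-history/action probabilities under the product topology---is the same in both arguments; it is your explicit $M$-test bound $\sum_{o_{1:t},a_{1:t}}\sup_u|\cdots|\le \|g\|_\infty|\aspace|^t$ that replaces the paper's use of Fatou's lemma and makes your uniform-limit argument go through.
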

\begin{proof}
Fix $\lambda\in\mcl{Y}$ and $\mu\in \uspacemix$. Let $\l\{ \mu_i \r\}_{i=1}^{\infty}$ be a sequence of (factorized) measures in $\uspacemix$ that converges to $\mu \in \uspacemix$. Since $\uspacemix \subseteq \m{\uspace}$ and has the same metric as $\m{\uspace}$, it means that $\l\{ \mu_i \r\}_{i=1}^{\infty}$ also converges to $\mu$ in $ \m{\uspace}$. We want to show
\begin{align*}
\liminf_{i\ra\infty} \E{U \sim \mu_i}{P_1} \l[  \lags{U}{\lambda} \r] \ge \E{U \sim \mu}{P_1} \l[  \lags{U}{\lambda} \r].
\end{align*}
By Lemma \ref{lem:lsc}, $L$ is point-wise lower semi-continuous on $\uspace$. Therefore, Proposition \ref{prop:lsc} applies on $\m{\uspace}$ and the above inequality follows.
\end{proof}

\begin{lem}[Lower Semi-Continuity of $L$ on $\uspace$]\label{lem:lsc}
Under Assumptions \ref{assmp:boundedcosts}(a) and \ref{assmp:boundedcosts}(c), the functions $C$ and $D_k$'s are lower semi-continuous on $\uspace$. Hence, $L$ is lower semi-continuous on $\uspace$. 
\end{lem}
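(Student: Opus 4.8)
The plan is to show that $C$ and each $D_k$ are lower semi-continuous on the compact metric space $\uspace$, since then $L = C + \sum_k \lambda_k (D_k - \constraintv_k)$ is l.s.c.\ as a finite sum of l.s.c.\ functions (for fixed $\lambda \ge 0$; note $\lambda_k \ge 0$ is used so that $\lambda_k D_k$ remains l.s.c.). For the constraint costs $D_k$, since $d$ is bounded both above and below (Assumption \ref{assmp:boundedcosts}(c)), one can apply the argument to $D_k$ and to $-D_k$ separately, so it suffices to treat a single cost that is bounded below (and separately one bounded above), i.e., the same argument applied to $C$ (bounded below only) handles the lower bound on $D_k$, and applied to $-d_k$ handles the upper bound; continuity of $D_k$ then follows. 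So the heart of the matter is the l.s.c.\ of $C$.

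First I would recall the topology on $\uspace$: via the one-to-one correspondence with $\xuspace = \prod_{n=1}^N \prod_{h \in \hsnspace{n}} \m{\anspace{n}; h}$, a sequence $u_i \to u$ means exactly that, for every agent $n$ and every history $h \in \hsnspace{n}$, the action-distributions $\utn{(u_i)}{t}{n}(\cdot \mid h) \to \utn{u}{t}{n}(\cdot \mid h)$ weakly in $\m{\anspace{n}}$; but $\anspace{n}$ is finite discrete, so weak convergence is just pointwise convergence of the finitely many probabilities $\utn{u_i}{t}{n}(\atn{}{n} \mid h) \to \utn{u}{t}{n}(\atn{}{n} \mid h)$. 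Hence $u_i \to u$ in $\uspace$ iff $\ut{(u_i)}{t}(\at{t} \mid \hst{h}{t}) \to \ut{u}{t}(\at{t} \mid \hst{h}{t})$ for every $t$, every $\hst{h}{t} \in \hstspace{t}$, every $\at{t} \in \aspace$, using \eqref{eq:uah} and the fact that a finite product of convergent sequences converges.

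Next, I would expand $C(u)$ explicitly. For a fixed horizon $T$, the truncated cost
\[
C_T(u) \defeq \E{u}{P_1}\Bigl[ \sum_{t=1}^{T} \alpha^{t-1} \cCost \Bigr]
\]
is a function of $u$ only through finitely many of the probabilities $\ut{u}{t}(\at{t}\mid\hst{h}{t})$, $t \le T$: indeed, by Ionescu--Tulcea, $\prup{u}{P_1}$ on histories up to time $T$ is a finite sum/series (over the countable set of histories and finite set of actions) of products of $P_1$, the transition kernel entries $P_{saBo}$, and the policy factors $\ut{u}{t}(\cdot\mid\cdot)$. Each such term is continuous in $u$ (a product of coordinates, each converging), and the series converges uniformly in $u$ because the summands are dominated by the corresponding term with all policy factors replaced by $1$ and $c$ replaced by $\ulbar{c}$ — actually I would instead argue via Fatou/monotone-type control: write $C_T$ as a countable sum of nonnegative continuous functions (after adding the constant $\ulbar{c}\sum\alpha^{t-1}$ to make $c$ nonnegative) and invoke that a countable sum of nonnegative l.s.c.\ functions is l.s.c. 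Thus $C_T + \text{const}$ is l.s.c., and in fact continuous, on $\uspace$. Then $C(u) = \sup_T C_T(u)$ when $c \ge \udl{c}$ is replaced by $c - \udl{c} \ge 0$ (the tail is a monotone increasing limit), and a supremum of l.s.c.\ (here continuous) functions is l.s.c.; subtracting back the constant $\udl{c}/(1-\alpha)$ preserves l.s.c. This gives l.s.c.\ of $C$.

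The main obstacle I anticipate is the interchange of limits: establishing that $C(u) = \sup_T C_T(u)$ (with $c$ shifted to be nonnegative) rather than merely $\liminf$, and justifying that each $C_T$ is genuinely continuous — i.e., that the infinite series defining $\prup{u}{P_1}$-expectations over the countable history space behaves well under coordinatewise convergence of $u_i \to u$. The clean way around this is the nonnegativity reduction: once $c$ is shifted by $\ulbar{c}$ to be nonnegative, every term $\alpha^{t-1} c(s_t,a_t)\, \prup{u}{P_1}(\text{that history path})$ is a nonnegative continuous function of $u$, $C$ (shifted) is their countable sum, hence l.s.c.\ by the standard fact that countable sums of nonnegative l.s.c.\ functions are l.s.c.; no limit interchange or dominated convergence is needed. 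I would present it in that order: topology identification, nonnegativity shift, path-expansion of the expectation into a countable sum of continuous nonnegative terms, apply the sum-of-l.s.c.\ fact, undo the shift, then handle $D_k$ by applying the same to $d_k$ and $-d_k$, and finally combine into $L$.
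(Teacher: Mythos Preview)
Your final approach is essentially the paper's: shift $c$ by $\udl{c}$ to make it nonnegative, expand $C$ as a countable sum of nonnegative terms each continuous in $u$ (the paper isolates the continuity of $u\mapsto \prup{u}{P_1}(H_t=h_t,A_t=a_t)$ as a separate induction lemma), and invoke Fatou, which is exactly your ``countable sum of nonnegative l.s.c.\ functions is l.s.c.'' step. One caveat on your write-up: since $\sspace$ is a general topological space you cannot decompose into a countable sum over state-paths as the phrasing ``$c(s_t,a_t)\,\prup{u}{P_1}(\text{that history path})$'' suggests; the paper sums over the countable pairs $(h_t,a_t)\in\hstspace{t}\times\aspace$ after applying the tower property, so the relevant $u$-independent factor multiplying $p(u,t,h_t,a_t)$ is $\mathbb{E}_{P_1}[c(S_t,A_t)\mid H_t=h_t,A_t=a_t]$ rather than a pointwise $c(s_t,a_t)$.
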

\begin{proof}
We will prove the statement for $C$. The proof of lower semi-continuity of $D_k$'s is similar. For brevity, let 
\begin{align*}
\pruphsts{u}{t}{\hst{h}{t}, \at{t}} &= \pruphst{u}{t}{\hst{h}{t}, \at{t}} \defeq \prup{u}{P_1}\l(\Hst{t} = \hst{h}{t}, \At{t} = \at{t}\r),
\\
\zuphsts{u}{t}{\hst{h}{t}, \at{t}} &= \zuphst{u}{t}{\hst{h}{t}, \at{t}}\\
&\hspace{-25pt} \defeq \pruphsts{u}{t}{\hst{h}{t}, \at{t}} \mbb{E}_{P_1}\l[ \cCost | \Hst{t} = \hst{h}{t}, \At{t} = \at{t} \r],
\end{align*}
where we use the convention $0 \cdot \infty = 0$. Then,
\begin{align*}
\fullccosts{u} &= \E{u}{P_1}\l[ \sum_{t=1}^{\infty} \alpha^{t-1} c(S_t, A_t) \r] \\
&= \E{u}{P_1}\l[ \sum_{t=1}^{\infty} \alpha^{t-1} \l( c(S_t, A_t) - \udl{c} \r) \r] + \sum_{t=1}^{\infty} \alpha^{t-1} \udl{c}\\
&\labelrel{=}{eqr:cp1u:a} \sum_{t=1}^{\infty} \alpha^{t-1} \E{u}{P_1} \l[ c(S_t, A_t) - \udl{c} \r] + \sum_{t=1}^{\infty} \alpha^{t-1} \udl{c}\\
&= \sum_{t=1}^{\infty} \alpha^{t-1} \E{u}{P_1} \l[ c(S_t, A_t) \r] \\
&\labelrel{=}{eqr:cp1u:b} \sum_{t=1}^{\infty} \alpha^{t-1} \E{u}{P_1} \l[ \mbb{E}_{P_1} \l[ c(S_t, A_t) | \Hst{t}, \At{t} \r] \r]\\
&= \sum_{t=1}^{\infty} \sum_{\hst{h}{t} \in \hstspace{t}} \sum_{\at{t}\in \aspace} \alpha^{t-1} \zuphsts{u}{t}{\hst{h}{t}, \at{t}}.
\end{align*}
Here, \eqref{eqr:cp1u:a} follows from applying the Monotone-Convergence Theorem to the (non-decreasing and non-negative) sequence $\{ \sum_{t=1}^{i} \alpha^{t-1} \l( c\l( \Stt{t}, \At{t} \r) - \udl{c} \r) \}_{i=1}^{\infty}$ (see Proposition \ref{prop:mct}); and \eqref{eqr:cp1u:b} uses the tower property of conditional expectation.\footnote{The conditional expectations $\mbb{E}_{P_1} \l[ c(S_t, A_t) | \Hst{t}, \At{t} \r]$ exist and are unique because $c(\cdot, \cdot)$ is bounded from below.}

Let $\l\{ \useq{i}{u} \r\}_{i=1}^{\infty}$ be a sequence in $\uspace$ that converges to $u$. By Fatou's Lemma (see Proposition \ref{prop:fatou}),
\begin{align*}
\liminf_{i\ra \infty} \fullccosts{\useq{i}{u}} \ge \sum_{t=1}^{\infty} \sum_{\hst{h}{t} \in \hstspace{t}} \sum_{\at{t}\in \aspace} \alpha^{t-1} \liminf_{i\ra\infty} \zuphsts{\useq{i}{u}}{t}{\hst{h}{t}, \at{t}}.\numberthis\label{eq:step1}
\end{align*}
Following Lemma \ref{lem:puth}, $\pruphsts{\useq{i}{u}}{t}{\hst{h}{t}, \at{t}} $ converges to $\pruphsts{u}{t}{\hst{h}{t}, \at{t}}$. Therefore,
\begin{align*}
\lim_{i\ra\infty} \zuphsts{\useq{i}{u}}{t}{\hst{h}{t}, \at{t}} = \zuphsts{u}{t}{\hst{h}{t}, \at{t}}.\numberthis\label{eq:step2}
\end{align*}
From \eqref{eq:step1} and \eqref{eq:step2}, it follows that
\begin{align*}
\liminf_{i\ra \infty} \fullccosts{\useq{i}{u}} \ge \fullccosts{u},
\end{align*}
which establishes the lower semi-continuity of $\fullccosts{u}$. 
\end{proof}
\section{Planning Using Common-Information Approach and Approximate-Information States}\label{sec:history_embedding}
Theorem \ref{thm:strongduality} provides firm theoretical support for primal-dual type planning and learning algorithms for a given MA-C-POMDP. Indeed, given the optimal Lagrange-multipliers vector $\lambda^\star$, MA-C-POMDP simply reduces to a MA-POMDP\footnote{Except the fact that randomization amongst equally valuable actions cannot be ignored, in general.}, so essentially all MA-POMPD algorithms apply for gradient descent in the primal space $\uspace$. However, one must find 
$\lambda^\star$. In light of the first inequality in \eqref{eq:saddlepointconditions}, we can do this by a projected gradient ascent in the dual space $\mcl{Y}$ -- on a slower time-scale so that it sees the minimization over the primal space $\uspace$ as having essentially equilibrated. In this section, we will assume that Assumption \ref{assmp:boundedcosts} ((a)-(c)) holds.

In this section, we shall first review the common-information approach \cite{nayyar13,nayyar14} that transforms a given MA-POMDP into an equivalent SA-POMDP. We will then use insights from the resulting SA-POMDP in order to derive a compression-framework for approximately solving the original MA-C-POMDP. This framework will be an extension of \cite{hsu22} the details of which will be left as an exercise for the reader. Nevertheless, an important goal will be achieved via this exercise: the approximation criteria of the compression-framework will be independent of the Lagrange-multipliers vector $\lambda$. This property will be essential in the learning context where we would like the learning of the compression-mapping to be independent of $\lambda$ (since $\lambda$ needs to be learned as well). Note that if we, instead, directly followed the approach of \cite{hsu22}, then for each value of the Lagrange-multipliers vector $\lambda$, we would have to find a new compression-mapping, and then adapt it as $\lambda$ is changed.

To achieve optimality gaps for the said compression-framework, we will first consider \eqref{eq:macpomdp} over a finite-horizon $T<\infty$, and then (with the aid of Assumption \ref{assmp:boundedcosts}) let $T$ go to infinity. Before, we proceed further, we present a simple lemma that uses Assumption \ref{assmp:slatercondition} to get an upper-bound on $\lambda^\star$ -- the existence and search of which can therefore be restricted to a compact cube in $\mbb{R}^K$. (This shall enable us to get $\lambda$-independent optimality-gaps for the compression-framework: see Remark \ref{rem:universalbound_optimalitygap}).  

\begin{lem}\label{lem:upperbound_lambda}
Under Assumptions \ref{assmp:boundedcosts}(a) and \ref{assmp:slatercondition}, the optimal Lagrange-multipliers vector $\lambda^\star$ is upper-bounded as follows:
\begin{align*}
\| \lambda^{\star} \|_{\infty} \le \| \lambda^{\star} \|_1 \le \frac{1}{\zeta} \l(  \fullccosts{\ov{u}} - \frac{\udl{c}}{1-\alpha} \r).
\end{align*}
\end{lem}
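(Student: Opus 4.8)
The plan is to exploit the saddle-point characterization in Theorem \ref{thm:strongduality}(d) together with the strict-feasibility margin $\zeta$ supplied by Slater's condition. First I would invoke Theorem \ref{thm:strongduality}(d): under Assumption \ref{assmp:slatercondition} there exists a pair $(u^\star,\lambda^\star)\in\uspace\times\mcl{Y}$ satisfying \eqref{eq:saddlepointconditions}, so in particular $\lags{u^\star}{\lambda^\star}=\optcosts$ and $u^\star$ minimizes $\lags{\cdot}{\lambda^\star}$. Evaluating the latter at the Slater policy-profile $\ov{u}$ gives the key inequality $\optcosts=\lags{u^\star}{\lambda^\star}\le\lags{\ov{u}}{\lambda^\star}$.

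Next I would expand the right-hand side with the definition \eqref{eq:lagrangian}, namely $\lags{\ov{u}}{\lambda^\star}=\fullccosts{\ov{u}}+\dotp{\lambda^\star}{\fulldcosts{\ov{u}}-\constraintv}$. Since $\lambda^\star\ge 0$ and, by Assumption \ref{assmp:slatercondition}, $\fulldcosts{\ov{u}}-\constraintv\le-\zeta 1$, the inner product is bounded above by $-\zeta\sum_{k=1}^{K}\lambda^\star_k=-\zeta\|\lambda^\star\|_1$, the last equality holding because all coordinates of $\lambda^\star$ are nonnegative. Combining with the previous step yields $\optcosts\le\fullccosts{\ov{u}}-\zeta\|\lambda^\star\|_1$, i.e., $\|\lambda^\star\|_1\le\tfrac{1}{\zeta}\bigl(\fullccosts{\ov{u}}-\optcosts\bigr)$.

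To finish I would lower-bound $\optcosts$. By Assumption \ref{assmp:boundedcosts}(a) we have $c(\cdot,\cdot)\ge\udl{c}$, so for every $u\in\uspace$, $\fullccosts{u}\ge\sum_{t=1}^{\infty}\alpha^{t-1}\udl{c}=\udl{c}/(1-\alpha)$; taking the infimum over the (nonempty, thanks to Slater) feasible set gives $\optcosts\ge\udl{c}/(1-\alpha)$. Substituting this into the bound above produces $\|\lambda^\star\|_1\le\tfrac{1}{\zeta}\bigl(\fullccosts{\ov{u}}-\udl{c}/(1-\alpha)\bigr)$, and $\|\lambda^\star\|_\infty\le\|\lambda^\star\|_1$ is immediate, which is the claim.

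I do not expect a genuine obstacle here; this is the standard dual-boundedness argument. The only points needing a word of care are the degenerate cases — if $\fullccosts{\ov{u}}=\infty$ the asserted bound is vacuous, and the case of an empty feasible set (where $\optcosts=\infty$) is excluded by Assumption \ref{assmp:slatercondition} — and making sure one uses the saddle point itself, not merely the optimal value, to pass from $\optcosts$ to $\lags{\ov{u}}{\lambda^\star}$.
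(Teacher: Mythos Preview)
Your proposal is correct and follows essentially the same route as the paper: invoke the saddle-point inequality $\optcosts=\lags{u^\star}{\lambda^\star}\le\lags{\ov{u}}{\lambda^\star}$, use Slater's condition and $\lambda^\star\ge 0$ to turn the inner product into $-\zeta\|\lambda^\star\|_1$, and lower-bound $\optcosts$ by $\udl{c}/(1-\alpha)$ via Assumption~\ref{assmp:boundedcosts}(a). The only cosmetic difference is that the paper chains these inequalities in one string, whereas you first isolate $\|\lambda^\star\|_1\le\tfrac{1}{\zeta}(\fullccosts{\ov{u}}-\optcosts)$ and then substitute the lower bound on $\optcosts$.
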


\begin{proof}
We have the following sequence of inequalities:
\begin{align*}
\sum_{t=1}^{\infty} \alpha^{t-1} \udl{c} &\labelrel{\le}{eqr:lamda:assmp:boundedcosts}  \optcosts = \lags{u^\star}{\lambda^\star} \\
&\hspace{-0pt} \labelrel{\le}{eqr:lamda:saddlepoint} \lags{\ov{u}}{\lambda^\star}\\
&\hspace{-0pt} = \fullccosts{\ov{u}} + \dotp{\lambda^\star}{\fulldcosts{\ov{u}}-\constraintv}\\
&\hspace{-0pt} \labelrel{\le}{eqr:lamda:assmp:slater} \fullccosts{\ov{u}} + \dotp{\lambda^\star}{-\zeta1}\\
&\hspace{-0pt} \labelrel{=}{eqr:lamda:positive} \fullccosts{\ov{u}} - \zeta \|\lambda^\star\|_1 \\
&\hspace{-0pt}\le \fullccosts{\ov{u}} - \zeta \|\lambda^\star\|_{\infty}.
\end{align*}
Here, \eqref{eqr:lamda:assmp:boundedcosts} uses Assumption \ref{assmp:boundedcosts}; \eqref{eqr:lamda:saddlepoint} uses the second inequality in \eqref{eq:saddlepointconditions}; \eqref{eqr:lamda:assmp:slater} uses Assumption \ref{assmp:slatercondition}; and \eqref{eqr:lamda:positive} uses the fact that $\lambda^{\star}$ is non-negative. 
\end{proof}

\subsection{Common-Information Approach}
The common-information approach \cite{nayyar13,nayyar14} shows that a (cooperative) MA-POMDP can be converted into an equivalent SA-POMDP---called the \textit{coordinated-system}. In this system, the single agent, called the \emph{coordinator}, is a virtual entity that has access to the common observations $\Otn{t}{0}$, and does not get to see the agents' private observations and actions $( \Otn{t}{1:N}, \Atn{t}{1:N} ) \setminus \Otn{t}{0}$. Therefore, from the perspective of the coordinator, the unknown state is the environment's state combined with the private histories of all agents, i.e., $( \Stt{t}, \Hstn{t}{1:N} )$. 

At time $t \in \mbb{N}$, via a \emph{coordination policy} (to be defined later), the coordinator uses the common-history $\Hstn{t}{0}$ to \udl{deterministically} chooses an action $ \Gt{t} = \Gtn{t}{1:N} $\footnote{As $\Gt{t}$ depends deterministically on the common-history, all agents can replicate it with consensus.}, where $\Gtn{t}{n}$ maps $ \hstnspace{t}{n}$ to $ \m{\anspace{n}} $ and is designed to be an enforcing \textit{prescription} for agent-$n$---agent-$n$ applies $\Gtn{t}{n}$ to its private history $\Hstn{t}{n}$ and then draws its action $\Atn{t}{n} \sim \Gtn{t}{n} ( \Hstn{t}{n})$. We use $\gtspace{t}$ to denote the set of all possible prescriptions at time $t$, i.e., $\gtspace{t} = \prod_{n=1}^{N} \gtnspace{t}{n}$ and $\gtnspace{t}{n} \defeq \{ \gtn{t}{n} : \hstnspace{t}{n} \ra \m{\anspace{n}} \}$. We note that $\gtspace{t}$ is in one-to-one correspondence with the set
\begin{align*}
    \xtspace{\gtspace{t}} \defeq \prod_{n=1}^{N} \prod_{ \hstn{h}{t}{n} \in \hstnspace{t}{n} } \m{\anspace{n}; \hstn{h}{t}{n} }.\numberthis\label{eq:xgtspace}
\end{align*}
which is a compact (and convex) space by Tychonoff's theorem (see Proposition \ref{prop:tychonoff}), and is also metrizable (see Proposition \ref{prop:metrizability}). Henceforth, we will assume $\gtspace{t}$ to have the same topology as $\xtspace{\gtspace{t}}$.
\begin{rem}
To help achieve equivalence between the coordinated system and MA-C-POMDP (which uses decentralized policy-profiles only), we have restricted the coordinator to choose prescriptions in a deterministic manner---no randomization over the elements of 
$\xtspace{\gtspace{t}}$'s. 
\end{rem}
Let $\wtHstn{t}{0}$ denote the prescription-observation history of the coordinator, i.e.,
\begin{align}
\begin{split}\label{eq:aoh:coordinator}
\wtHstn{1}{0} &\defeq \Otn{1}{0} \text{ and } \\
\wtHstn{t}{0} &\defeq \l( \wtHstn{t-1}{0}, \Gt{t-1}, \Otn{t}{0} \r) \text{ for all } t\in[2,\infty]_{\mbb{Z}}.
\end{split}
\end{align}
Then we can define a coordination policy $v$ as a tuple $\ut{v}{1:\infty} \in \vvspace$ where $\ut{v}{t}$ maps $\wthstnspace{t}{0}$ to $\gtspace{t}$ and where agent $n$ draws its action $\Atn{t}{n}$ according to the distribution $\Gtn{t}{n}( \Hstn{t}{n} ) = ( \ut{v}{t}( \wtHstn{t}{0} ) )^{(n)}( \Hstn{t}{n} )$. 
Note that in this setup, each $\wtHstn{t}{0}$ is some deterministic function of $\Hstn{t}{0}$. Therefore, we can replace $\ut{v}{t} ( \wtHstn{t}{0} )$ by $\ut{\wt{v}}{t} ( \Hstn{t}{0} )$. One then establishes equivalence between the coordinated system and MA-C-POMDP by setting 
\begin{align*}
\utn{u}{t}{n} \l( \hstn{h}{t}{0}, \hstn{h}{t}{n} \r) 
&= \l( \ut{v}{t} \l(  \hstn{\wt{h}}{t}{0} \r) \r)^{(n)} \l( \hstn{h}{t}{n} \r) \\
&= \l( \wt{v}_t  \l(  \hstn{h}{t}{0} \r) \r)^{(n)} \l( \hstn{h}{t}{n} \r).
\end{align*}
In light of the above equivalence and the strong duality result of Theorem \ref{thm:strongduality}, from now onward, we will restrict our focus to deriving optimal and approximately optimal coordination policies for coordinated-systems whose immediate-costs are parametrized by 
$\lambda\in\mcl{Y}$, namely $\un{l}{\lambda}: \sspace \times \aspace \ra \mbb{R}$, where
\begin{align*}
\un{l}{\lambda}\l(s,a\r) &\defeq c\l(s,a\r) + \dotp{\lambda}{d\l(s,a\r) -\constraintv}.\numberthis\label{eq:l_lamda}
\end{align*}
Also, in line with our aforementioned plan, we parametrize the aggregate discounted costs by horizons $T \in \mbb{N} \cup \{ \infty \}$, namely $L_T: \vvspace \times \mcl{Y} \ra \mbb{R}$, where
\begin{align*}
    L_T \l( v, \lambda \r) = L_T^{(P_1, \alpha)} \l( v, \lambda \r) &\defeq \E{v}{P_1} \l[ \sum_{t=1}^{T} \alpha^{t-1} \lCost \r].\numberthis\label{eq:L_T}
\end{align*}
With the above setup in place, for a (finite) horizon $T \in \mbb{N}$ and a fixed Lagrange-multiplier $\lambda\in\mcl{Y}$, one has the following dynamic program (Algorithm \ref{alg:cidecomposition}) to find the optimal coordination policy for the objective $L_T$ (see \cite{nayyar13,nayyar14}). We note that the usage of $\min$ and $\argmin$ in \eqref{eq:V_tT_lamda} and \eqref{eq:v_tT_lamda_star} is justified due to compactness of $\gtspace{t}$, and the choice of prescription in \eqref{eq:v_tT_lamda_star} is arbitrary. 

\begin{algorithm}[H]
\DontPrintSemicolon
\KwInput{Time-horizon $T \in \mbb{N}$, Discount-factor $\alpha\in(0,1]$, MA-C-POMDP Model (see Section \ref{sec:problem}).}
\Parameter{Lagrange-multipliers vector $\lambda \in \mcl{Y}$.}
\KwOutput{$\utn{v}{1:T}{\lambda,\star}$ determined by $\l\{ \Qtl{t,T}{\lambda} : \wthstnspace{t}{0} \times \gtspace{t} \ra \mbb{R} \r\}_{t=1}^{T}$.
}

\nonl $V_{T+1,T}^{(\lambda)} \equiv 0$.

\nonl \For{$t = T, T-1, \dots, 1$}{
\nonl \begin{align*}
&\Qtl{t,T}{\lambda} \l( \hstn{\wt{h}}{t}{0}, \gt{t} \r) 
= \mbb{E} \l[ \lCost \r. \\
&\hspace{10pt} \l. 
+ \alpha \Vtl{t+1,T}{\lambda} \l( \wtHstn{t+1}{0} \r)  \mid \wtHstn{t}{0} = \hstn{\wt{h}}{t}{0}, \Gt{t} = \gt{t} \r].\numberthis\label{eq:Q_tT_lamda}\\
&\Vtl{t,T}{\lambda} \l( \hstn{\wt{h}}{t}{0} \r) = \min_{\gt{t} \in \gtspace{t} } \Qtl{t,T}{\lambda} \l( \hstn{\wt{h}}{t}{0}, \gt{t} \r).\numberthis\label{eq:V_tT_lamda}\\
&\utn{v}{t}{\lambda, \star} \l( \hstn{\wt{h}}{t}{0} \r) \in \argmin_{\gt{t} \in \gtspace{t}} \Qtl{t,T}{\lambda} \l( \hstn{\wt{h}}{t}{0}, \gt{t} \r).\numberthis\label{eq:v_tT_lamda_star}
\end{align*}
}
\caption{Dynamic Programming with Full Common and Private Histories in Finite-Horizon setting.}\label{alg:cidecomposition}
\end{algorithm}
\begin{rem}\label{rem:necessaryonly}
    Akin to SA-C-MDP and SA-C-POMDPs, even when $\lambda^\star$ is known, finding an optimal policy for the unconstrained objective $L_T\l( \cdot, \lambda^*\r)$ does not necessarily imply solving the (finite-horizon version of the) original constrained optimization problem---because the coordination policy $\utn{v}{1:T}{\lambda, \star} $ obtained from \eqref{eq:v_tT_lamda_star} may not satisfy the constraints. However, from Theorem \ref{thm:strongduality}, we are guaranteed that an optimal coordination policy exists and it is also true that any such policy must choose a prescription from the set in the right-hand-side of \eqref{eq:v_tT_lamda_star}. It is hard to characterize how the optimal policy randomizes between these prescriptions, thus our choice of arbitrary selection in \eqref{eq:v_tT_lamda_star}. Having said that, this issue shall remain somewhat innocuous in the learning context where $\lambda$ will be continuously updated based on constraint violations. A similar remark would apply to the (approximate) dynamic program in Algorithm \ref{alg:dp_approximatestate}.
\end{rem}

From dynamic programming theory, it is known that $\Vtl{t,T}{\lambda} : \wthstnspace{t}{0} \ra \mbb{R}$ (see \eqref{eq:V_tT_lamda}) satisfies 
\begin{align*}
\Vtl{t,T}{\lambda} \l( \hstn{\wt{h}}{t}{0} \r) = \inf_{v\in\vvspace} \E{v}{P_1} \l[ \sum_{\tau=t}^{T} \alpha^{\tau - t} \un{l}{\lambda}\l(\Stt{\tau}, \At{\tau}\r) \Big| \wtHstn{t}{0} = \hstn{\wt{h}}{t}{0} \r].
\end{align*}
Therefore, the (finite-horizon) coordination policy $\utn{v}{1:T}{\lambda, \star}$ (see \eqref{eq:v_tT_lamda_star}) minimizes the (finite-horizon) objective $L_T$, i.e.,
\begin{align*}
    L_T\l( \utn{v}{1:T}{\lambda, \star}, \lambda \r) &= \inf_{v\in\vvspace} L_T\l( v, \lambda \r).
\end{align*}
\begin{rem}
The first argument of $L_T$, by definition (see \eqref{eq:L_T}), should be an element of $\vvspace$. However, since the specification of the policy for times $T+1$ and onward does not matter, the above equation is consistent (with slight abuse of notation). 
\end{rem}

Using Assumption \ref{assmp:boundedcosts}, we can now compare $\Vtl{t,T}{\lambda}$ (the optimal performance on a finite-horizon $T\ge t$) with the optimal performance on the infinite-horizon. Let us define value-functions $\{ \Vtl{t}{\lambda} : \wthstnspace{t}{0} \ra \mbb{R} \}_{t=1}^{\infty}$,
\begin{align*}
&\Vtl{t}{\lambda} \l( \hstn{\wt{h}}{t}{0} \r) \\
&\defeq \inf_{v\in \vvspace} 
\E{v}{P_1} \l[ \sum_{\tau=t}^{\infty} \alpha^{\tau-t} \un{l}{\lambda}\l(\Stt{\tau}, \At{\tau}\r) \Big| \wtHstn{t}{0} = \hstn{\wt{h}}{t}{0} \r],\numberthis\label{eq:V_t_lamda}
\end{align*}
and the corresponding prescription-value-functions $\{ \Qtl{t}{\lambda} : \wthstnspace{t}{0} \times \gtspace{t} \ra \mbb{R} \}_{t=1}^{\infty}$,
\begin{align*}
&\Qtl{t}{\lambda} \l( \hstn{\wt{h}}{t}{0}, \gt{t} \r) \defeq \mbb{E}_{P_1} \l[ \un{l}{\lambda}\l(\Stt{t}, \At{t}\r) \r. \\
&\hspace{10pt} \l. + \alpha \Vtl{t+1}{\lambda} \l( \wtHstn{t+1}{0} \r) \Big| \wtHstn{t}{0} = \hstn{\wt{h}}{t}{0}, \Gt{t} = \gt{t} \r].\numberthis\label{eq:Q_t_lamda}
\end{align*}
Then, the following bound on $\Vtl{t}{\lambda}$ with respect to $\Vtl{t,T}{\lambda}$ ($T\ge t$) holds.
\begin{prop}\label{prop:VtT_vs_Vt}
    Fix $\lambda\in\mcl{Y}$, $\alpha\in(0,1)$, and $t \in \mbb{N}$. Suppose Assumption \ref{assmp:boundedcosts} holds and consider a (finite) horizon $T\in [t, \cdot]$. Then, for any $\hstn{\wt{h}}{t}{0} \in \wthstnspace{t}{0}$, the following relation holds between $\Vtl{t,T}{\lambda} \l( \hstn{\wt{h}}{t}{0} \r) $ and $\Vtl{t}{\lambda} $.
    \begin{align*}
        &\Vtl{t,T}{\lambda} \l( \hstn{\wt{h}}{t}{0} \r) + \frac{\alpha^{T-t+1}}{1-\alpha} \un{\udl{l}}{\lambda} \le \Vtl{t}{\lambda} \l( \hstn{\wt{h}}{t}{0} \r) \\
        &\hspace{20pt} \le \Vtl{t,T}{\lambda} \l( \hstn{\wt{h}}{t}{0} \r) + \frac{\alpha^{T-t+1}}{1-\alpha} \un{\ov{l}}{\lambda},
    \end{align*}
    where
    \begin{align*}\numberthis\label{eq:lower_and_upper_lcost}
        \un{\udl{l}}{\lambda} \defeq \udl{c} + \dotp{\lambda}{\udl{d} - \constraintv } \text{ and }
        \un{\ov{l}}{\lambda} \defeq \ov{c} + \dotp{\lambda}{\ov{d} - \constraintv }
    \end{align*}
\end{prop}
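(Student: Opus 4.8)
The plan is to exploit the additive discounted structure of the cost-to-go. For any coordination policy $v\in\vvspace$, the infinite-horizon conditional cost from time $t$ splits into a \emph{head} (times $t$ through $T$), whose infimum over $v$ is exactly $\Vtl{t,T}{\lambda}$ by the dynamic-programming discussion preceding the proposition, plus a \emph{tail} (times $T+1$ onward) that is uniformly small because, for the fixed $\lambda$, the instantaneous cost $\un{l}{\lambda}$ is squeezed between $\un{\udl{l}}{\lambda}$ and $\un{\ov{l}}{\lambda}$ (a consequence of Assumption \ref{assmp:boundedcosts}). Taking infima over $v$ then delivers the two inequalities.

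Concretely, first I would record that for the fixed $\lambda\in\mcl{Y}$ Assumption \ref{assmp:boundedcosts} gives the pointwise bounds $\un{\udl{l}}{\lambda}\le \un{l}{\lambda}(\cdot,\cdot)\le \un{\ov{l}}{\lambda}$, with $\un{\udl{l}}{\lambda},\un{\ov{l}}{\lambda}$ as in \eqref{eq:lower_and_upper_lcost}; in particular $\un{l}{\lambda}$ is bounded, so $\sum_{\tau=t}^{\infty}\alpha^{\tau-t}\un{l}{\lambda}(\Stt{\tau},\At{\tau})$ is absolutely summable and bounded and all conditional expectations below exist and are finite (so $\Vtl{t}{\lambda}$ is indeed real-valued). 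Splitting the sum and interchanging it with the conditional expectation (justified by dominated convergence, the partial sums being dominated by the geometric bound $(|\un{\udl{l}}{\lambda}|\vee|\un{\ov{l}}{\lambda}|)/(1-\alpha)$), for every $v\in\vvspace$ and $\hstn{\wt{h}}{t}{0}\in\wthstnspace{t}{0}$,
\begin{align*}
&\E{v}{P_1}\!\l[\sum_{\tau=t}^{\infty}\alpha^{\tau-t}\un{l}{\lambda}(\Stt{\tau},\At{\tau})\,\Big|\,\wtHstn{t}{0}=\hstn{\wt{h}}{t}{0}\r] \\
&\qquad = \E{v}{P_1}\!\l[\sum_{\tau=t}^{T}\alpha^{\tau-t}\un{l}{\lambda}(\Stt{\tau},\At{\tau})\,\Big|\,\wtHstn{t}{0}=\hstn{\wt{h}}{t}{0}\r] + r_T(v),
\end{align*}
where $r_T(v)$ denotes the conditional expectation of the tail $\sum_{\tau=T+1}^{\infty}\alpha^{\tau-t}\un{l}{\lambda}(\Stt{\tau},\At{\tau})$ and, since $\sum_{\tau=T+1}^{\infty}\alpha^{\tau-t}=\alpha^{T-t+1}/(1-\alpha)$, satisfies $\tfrac{\alpha^{T-t+1}}{1-\alpha}\un{\udl{l}}{\lambda}\le r_T(v)\le\tfrac{\alpha^{T-t+1}}{1-\alpha}\un{\ov{l}}{\lambda}$.

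Finally I would take $\inf_{v\in\vvspace}$ of the above. For the lower bound, the decomposition together with $r_T(v)\ge\tfrac{\alpha^{T-t+1}}{1-\alpha}\un{\udl{l}}{\lambda}$ shows that for every $v$ the infinite-horizon cost is at least its head plus $\tfrac{\alpha^{T-t+1}}{1-\alpha}\un{\udl{l}}{\lambda}$, which is at least $\Vtl{t,T}{\lambda}(\hstn{\wt{h}}{t}{0})+\tfrac{\alpha^{T-t+1}}{1-\alpha}\un{\udl{l}}{\lambda}$; taking the infimum over $v$ on the left and using \eqref{eq:V_t_lamda} gives $\Vtl{t}{\lambda}(\hstn{\wt{h}}{t}{0})\ge\Vtl{t,T}{\lambda}(\hstn{\wt{h}}{t}{0})+\tfrac{\alpha^{T-t+1}}{1-\alpha}\un{\udl{l}}{\lambda}$. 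For the upper bound, $r_T(v)\le\tfrac{\alpha^{T-t+1}}{1-\alpha}\un{\ov{l}}{\lambda}$ gives, for every $v$, infinite-horizon cost $\le$ head $+\,\tfrac{\alpha^{T-t+1}}{1-\alpha}\un{\ov{l}}{\lambda}$; since $\inf_v(f(v)+c)=c+\inf_v f(v)$ and $f\le g$ pointwise implies $\inf f\le\inf g$, taking infima and using that $\inf_v$ of the head equals $\Vtl{t,T}{\lambda}(\hstn{\wt{h}}{t}{0})$ yields $\Vtl{t}{\lambda}(\hstn{\wt{h}}{t}{0})\le\Vtl{t,T}{\lambda}(\hstn{\wt{h}}{t}{0})+\tfrac{\alpha^{T-t+1}}{1-\alpha}\un{\ov{l}}{\lambda}$. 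Combining the two bounds is the claim. The argument is essentially bookkeeping; the only points that warrant care are the interchange of the infinite discounted sum with the conditional expectation (dominated convergence via boundedness of $\un{l}{\lambda}$ for the fixed $\lambda$) and the identification of the infimum of the head with $\Vtl{t,T}{\lambda}$, i.e.\ the dynamic-programming fact recorded via Algorithm \ref{alg:cidecomposition} just before the proposition.
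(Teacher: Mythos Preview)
Your argument is correct. The paper does not actually give a proof: it simply states that ``the proof can be established by backward induction'' and omits the details. Your route is a direct head--tail decomposition of the conditional discounted sum, bounding the tail uniformly via $\un{\udl{l}}{\lambda}\le \un{l}{\lambda}\le \un{\ov{l}}{\lambda}$ and then passing to the infimum over $v\in\vvspace$ using the variational characterizations of $\Vtl{t}{\lambda}$ and $\Vtl{t,T}{\lambda}$ recorded just before the proposition. The backward-induction approach the paper alludes to would instead induct on $T-t$, using the Bellman recursion \eqref{eq:Q_tT_lamda}--\eqref{eq:V_tT_lamda} together with the one-step bound on $\un{l}{\lambda}$ and the contraction of the discount; your argument bypasses the recursion entirely and is arguably more elementary, at the modest cost of invoking dominated convergence to justify splitting the infinite sum. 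Both yield exactly the same constants $\tfrac{\alpha^{T-t+1}}{1-\alpha}\un{\udl{l}}{\lambda}$ and $\tfrac{\alpha^{T-t+1}}{1-\alpha}\un{\ov{l}}{\lambda}$.
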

\begin{proof}
    The proof can be established by backward induction. For brevity, it is omitted.
\end{proof}
With Proposition \ref{prop:VtT_vs_Vt}, we have a bound on the gap between $\Vtl{t}{\lambda}$ and $\Vtl{t,T}{\lambda}$ ($T\ge t$), that decays exponentially with $T$ (due to discounting). Therefore, for the time being, let us focus on the finite-horizon setup for which an optimal coordination policy can be found by Algorithm \ref{alg:cidecomposition}. There are, however, two key issues with Algorithm \ref{alg:cidecomposition}. Firstly, the domain of $\wtHstn{t}{0}$ grows exponentially in time and while one may compress $\wtHstn{t}{0}$ to a belief-state $\Pi_t \defeq \pr_{P_1} ( \Stt{t}, \Hstn{t}{1:N} \mid \wtHstn{t}{0} )  $ (without loss of optimality), 
the update of $\Pi_t$ requires knowledge of $\mcl{P}_{tr}$ (the transition-law) and $P_1$ (joint distribution of initial state and initial joint-observation) which are not available in the learning context. Secondly, and more importantly, the domain of the unobserved state $\Stt{t}, \Hstn{t}{1:N}$ grows exponentially with time and the number of agents---which leads to doubly-exponential growth of the coordinator's prescriptions. Due to these issues, Algorithm \ref{alg:cidecomposition} is generally not implementable, and thus remains conceptual. 

One means to address the above challenges is to compress the increasing common and private histories in such a way that when policies are restricted to take actions based on the compressed images of these histories, the performance is still approximately good. This is the goal in \cite{hsu22} which proposes one such compression-framework, and then characterizes its optimality-gap (as a function of the compression attributes). In the remainder of the section, we will extend the notions of \cite{hsu22} to the constrained setting, and as mentioned earlier, we will do so in a manner that the learning of the compression mapping shall remain independent of the Lagrange-multipliers vector $\lambda$. Also, it will be helpful to keep in mind that our ultimate goal will be to get a bound on the gap between $\Vtl{t,T}{\lambda}$ and $\whVtl{t,T}{\lambda}$ where $\whVtl{t,T}{\lambda}$ will be the optimal cost-to-go when coordination policies are restricted to use a prespecified compression framework.

\subsection{Approximate State Representations (Finite-Horizon)}
The framework in \cite{hsu22} uses a three-step process. It first compresses private histories from $\Hstn{t}{1:N} $ to an \textit{approximate sufficient private state (ASPS)}, which we will denote by $\Zhtn{t}{1:N} 
$. Compressing private histories to ASPS then changes the set of coordinator's prescriptions to a reduced set, the elements of which we will denote by $\lamdaht{t}$. Finally, the coordinator's prescription-observation history (now with the set of reduced prescriptions) is compressed from $\wtHstn{t}{0}$ to an \textit{approximate sufficient common state (ASCS)}, which we will denote by $\Zhtn{t}{0}$. With these three steps in mind, we now give formal definitions of ASPS and ASCS. (Note that ASCS relies on having an ASPS beforehand).

\begin{dfn}[Finite-Horizon ASPS Generator]\label{dfn:asps_generator_finite_horizon}
Let $T\in\mbb{N}$ and let $\zhtnspace{1:T}{1:N}$ be a pre-specified collection of topological spaces.\footnote{
We can assume each $\zhtnspace{t}{n}$ is a Euclidean space of some fixed dimension that can vary with $t$ and $n$, but a time-invariant domain is best in practice.} A collection $ \varthetahtn{1:T}{1:N} $ of compression-functions where $\varthetahtn{t}{n}$ maps $\hstnspace{t}{0} \times \hstnspace{t}{n}$ to $\zhtnspace{t}{n}$ is called a $T$-horizon $\l(\eps_{p,1}, \eps_{p,2}, \delta_p \r)$-ASPS generator if the process $\{ \Zhtn{t}{1:N} \}_{t=1}^{T}$, with $\Zhtn{t}{n} = \varthetahtn{t}{n}( \Hstn{t}{0}, \Hstn{t}{n} ) $ almost-surely, satisfies the following properties.
\begin{itemize}
\item[] \textbf{(ASPS1)} It evolves in a recursive manner: for each $n \in [N] $,
\begin{align*}
\Zhtn{t}{n}=\phihtn{t}{n} \l( \Zhtn{t-1}{n}, 
\Otn{t}{0}, \l( \Otn{t}{n}, \Atn{t-1}{n}\r)\setminus \Otn{t}{0} \r).
\end{align*}

\item[] \textbf{(ASPS2.1)} It suffices for approximate prediction of objective-cost: for all $\l(\hstn{\wt{h}}{t}{0}, \hstn{h}{t}{1:N}, \at{t}\r) \in \wthstnspace{t}{0} \times \prod_{n=1}^{N} \hstnspace{t}{n} \times \aspace$,
\begin{align*}
&\l| \mbb{E}_{P_1} \l[  c\l( \Stt{t}, \At{t} \r) \Big| \hstn{\wt{h}}{t}{0}, \hstn{h}{t}{1:N}, \at{t} \r] - \r. \\
&\hspace{40pt} \l. \mbb{E}_{P_1} \l[ c\l( \Stt{t}, \At{t} \r) \Big| \hstn{\wt{h}}{t}{0}, \zhtn{t}{1:N}, \at{t} \r]  \r| \le \frac{\eps_{p,1}}{4}.
\end{align*}

\item[] \textbf{(ASPS2.2)} It suffices for approximate prediction of constraint cost: for all $\l(\hstn{\wt{h}}{t}{0}, \hstn{h}{t}{1:N}, \at{t}\r) \in \wthstnspace{t}{0} \times \prod_{n=1}^{N} \hstnspace{t}{n} \times \aspace$,
\begin{align*}
&\l\| \mbb{E}_{P_1} \l[  d\l( \Stt{t}, \At{t} \r) \Big| \hstn{\wt{h}}{t}{0}, \hstn{h}{t}{1:N}, \at{t} \r] - \r. \\
&\hspace{40pt} \l. \mbb{E}_{P_1} \l[ d\l( \Stt{t}, \At{t} \r) \Big| \hstn{\wt{h}}{t}{0}, \zhtn{t}{1:N}, \at{t} \r]  \r\|_{\infty} \le \frac{\eps_{p,2}}{4}.
\end{align*}

\item[] \textbf{(ASPS3)} It suffices for approximate prediction of observations: for all $\l(\hstn{\wt{h}}{t}{0}, \hstn{h}{t}{1:N}, \at{t}\r) \in \wthstnspace{t}{0} \times \prod_{n=1}^{N} \hstnspace{t}{n} \times \aspace$,
\begin{align*}
&\kappa \l( 
\mbb{P}_{P_1} \l( \Otn{t+1}{0:N} \mid \hstn{\wt{h}}{t}{0}, \hstn{h}{t}{1:N}, \at{t} \r), \r. \\ 
&\hspace{30pt} \l. \mbb{P}_{P_1} \l( \Otn{t}{0:N} \mid \hstn{\wt{h}}{t}{0}, \zhtn{t}{1:N}, \at{t} \r) 
\r)  \leq \frac{\delta_p}{8},
\end{align*}
where $\kappa\l(\cdot, \star \r)$ is the total variation distance between the two probability measures.
\end{itemize}
\end{dfn}

Let us denote the range of $\varthetahtn{t}{n}$ by $\zhtnspaces{t}{n}$, i.e., $\zhtnspaces{t}{n} \defeq \varthetahtn{t}{n} \l( \hstnspace{t}{0} \times \hstnspace{t}{n} \r)$. Then the above definition induces a compression in the coordinator's prescriptions from $\Gt{t}$ to $\Lamdaht{t} = \Lamdahtn{t}{1:N}$ where $\Lamdahtn{t}{n}$ maps $\zhtnspaces{t}{n}$ (instead of $\hstnspace{t}{n}$) to $\m{\anspace{n}}$. We use $\lamdahtspace{t}$ to denote the set of all possible reduced prescriptions at time $t$, i.e., $\lamdahtspace{t} = \lamdahtnspace{t}{1:N}$ where $ \lamdahtnspace{t}{n} \defeq \{ \lamdahtn{t}{n} : \zhtnspaces{t}{n} \mapsto \m{\anspace{n}} \}$. We note that $\lamdahtspace{t}$ is in one-to-one correspondence with the set
\begin{align*}
    \xtspace{\lamdahtspace{t}} \defeq \prod_{n=1}^{N} \prod_{ \zhtn{t}{n} \in \zhtnspaces{t}{n} } \m{\anspace{n}; \zhtn{t}{n}}.\numberthis\label{eq:xlamdahtspace}
\end{align*}
which is a compact (and convex) space by Tychonoff's theorem (see Proposition \ref{prop:tychonoff}), and is also metrizable. From hereon, we will assume $\lamdahtspace{t}$ to have the same topology as $\xtspace{\lamdahtspace{t}}$.

Having detailed the compression of \textit{i}) private histories to ASPS, and \textit{ii}) private-history based prescriptions to ASPS-based prescriptions, we now proceed to formally characterizing the compression of the common history to ASCS.

\begin{dfn}[Finite-Horizon ASCS Generator]\label{dfn:ascs_generator_finite_horizon}
Let $T\in\mbb{N}$ and let $\zhtnspace{1:T}{0}$ be a collection of topological spaces.\footnote{
For our purposes, we can assume each $\zhtnspace{t}{0}$ is a Euclidean space of some fixed dimension that can vary with $t$.} For a given $T$-horizon $\l(\eps_{p,1}, \eps_{p,2}, \delta_p \r)$-ASPS generator, a collection $ \varthetahtn{1:T}{0} $ of compression-functions where $\varthetahtn{t}{0}$ maps $\wthstnspace{t}{0}$ to $\zhtnspace{t}{0}$ is called the corresponding $T$-horizon $\l(\eps_{c,1}, \eps_{c,2}, \delta_c \r)$-ASCS generator if the process $\{ \Zhtn{t}{0} \}_{t=1}^{T}$, with $\Zhtn{t}{0} = \varthetahtn{t}{0}( \wtHstn{t}{0} ) $ almost-surely, satisfies the following properties.
\begin{enumerate}
\item \textbf{(ASCS1)} It evolves in a recursive manner:
\begin{align*}
\Zhtn{t}{0}=\phihtn{t}{0} \l(\Zhtn{t-1}{0}, \Lamdaht{t-1}, \Otn{t}{0} \r).
\end{align*}

\item \textbf{(ASCS2.1)} It suffices for approximate prediction of objective-cost: for all $\l(\hstn{\wt{h}}{t}{0}, \lamdaht{t} \r) \in \wthstnspace{t}{0} \times \lamdahtspace{t}$,
\begin{align*}
&\l| \mbb{E}_{P_1} \l[  c\l( \Stt{t}, \At{t} \r) \Big| \hstn{\wt{h}}{t}{0}, \lamdaht{t} \r] - \r. \\
&\hspace{40pt} \l. \mbb{E}_{P_1} \l[ c\l( \Stt{t}, \At{t} \r) \Big| \zhtn{t}{0}, \lamdaht{t} \r]  \r| \le \frac{\eps_{c,1}}{4}.
\end{align*}

\item \textbf{(ASCS2.2)} It suffices for approximate prediction of constraint-cost: for all $\l(\hstn{\wt{h}}{t}{0}, \lamdaht{t} \r) \in \wthstnspace{t}{0} \times \lamdahtspace{t}$,
\begin{align*}
&\l\| \mbb{E}_{P_1} \l[  d\l( \Stt{t}, \At{t} \r) \Big| \hstn{\wt{h}}{t}{0}, \lamdaht{t} \r] - \r. \\
&\hspace{40pt} \l. \mbb{E}_{P_1} \l[ d\l( \Stt{t}, \At{t} \r) \Big| \zhtn{t}{0}, \lamdaht{t} \r]  \r\|_{\infty} \le \frac{\eps_{c,2}}{4}.
\end{align*}

\item \textbf{(ASCS3)} It suffices for approximate prediction of common-observations: for all $\l(\hstn{\wt{h}}{t}{0}, \lamdaht{t} \r) \in \wthstnspace{t}{0} \times \lamdahtspace{t}$,
\begin{align*}
&\kappa \l( 
\mbb{P}_{P_1} \l( \Otn{t+1}{0} \mid \hstn{\wt{h}}{t}{0}, \lamdaht{t} \r), \mbb{P}_{P_1} \l( \Otn{t}{0} \mid \zhtn{t}{0}, \lamdaht{t} \r) 
\r)  \leq \frac{\delta_c}{8}.
\end{align*}
\end{enumerate}
\end{dfn}
\begin{rem}
ASPS-1 and ASCS-1 are important for designing implementable algorithms; the recursive nature of $\Zhtn{t}{n}$'s obviates the requirement of storing the full histories.
\end{rem}

Let us denote the range of $\varthetahtn{t}{0}$ by $\zhtnspaces{t}{0}$, i.e., $\zhtnspaces{t}{0} \defeq \varthetahtn{t}{0} \l( \wthstnspace{t}{0} \r)$. Then combining ASCS with ASPS (and ASPS-based prescriptions) leads to a reduction in the policy search space of the coordinator from $\vvspace$ to $\wh{\vvspace}( \varthetahtn{1:T}{0:N} )$. A typical element $\wh{v}$ of $\wh{\vvspace}$ is a tuple $\wh{v}_{1:\infty}$, where for all $t \in [T]$, $\wh{v}_t$ maps $\zhtnspaces{t}{0}$ to $\lamdahtspace{t}$, such that to take action $\Atn{t}{n}$, agent $n$ uses the distribution $$\Lamdahtn{t}{n}\l(\Zhtn{t}{n} \r) = \l[ \wh{v}_t \l(\Zhtn{t}{0}\r) \r]^{(n)} \l( \Zhtn{t}{n} \r), $$ 
which is almost-surely the same as 
$$ \l[ \wh{v}_t \l( \varthetahtn{t}{0}\l(\wtHstn{t}{0}\r) \r) \r]^{(n)} \l( \varthetahtn{t}{n} \l( \Hstn{t}{0}, \Hstn{t}{n} \r)  \r).$$

Given $T$-horizon ASPS and ASCS generators $\varthetahtn{1:T}{0:N}$, we can find an optimal-policy in $\wh{\vvspace}$ (thus approximately optimal in $\vvspace$) using the (approximate) dynamic program in Algorithm \ref{alg:dp_approximatestate}.

\begin{algorithm}[H]
\DontPrintSemicolon
\KwInput{Time-horizon $T \in \mbb{N}$, Discount-factor $\alpha\in(0,1]$, MA-C-POMDP Model (see Section \ref{sec:problem}).}
\Parameter{Lagrange-multipliers vector $\lambda \in \mcl{Y}$.}
\KwOutput{$\utn{\wh{v}}{1:T}{\lambda,\star}$ determined by $\l\{ \whQtl{t,T}{\lambda} : \zhtnspaces{t}{0} \times \lamdahtspace{t} \ra \mbb{R} \r\}_{t=1}^{T}$.
}

\nonl $\wh{V}_{T+1,T}^{(\lambda)} \equiv 0$.

\nonl \For{$t = T, T-1, \dots, 1$}{
\nonl \begin{align*}
&\whQtl{t,T}{\lambda}\l(\zhtn{t}{0}, \lamdaht{t}\r) = 
\mbb{E} \l[
\lCost \r. \\
&\hspace{10pt} \l. + \alpha \whVtl{t+1,T}{\lambda}  \l( \Zhtn{t+1}{0} \r)  \mid \Zhtn{t}{0} = \zhtn{t}{0}, \Lamdaht{t} = \lamdaht{t} \r].\numberthis\label{eq:whQ_tT_lamda}\\
&\whVtl{t,T}{\lambda}\l( \zhtn{t}{0} \r) = \min_{\lamdaht{t} \in \lamdahtspace{t} } \whQtl{t,T}{\lambda}\l( \zhtn{t}{0}, \lamdaht{t} \r)\numberthis\label{eq:whV_tT_lamda}\\
&\utn{\wh{v}}{t}{\lambda,\star} \l(\zhtn{t}{0} \r) = \argmin_{\lamdaht{t} \in \lamdahtspace{t}} \whQtl{t,T}{\lambda}\l( \zhtn{t}{0}, \lamdaht{t} \r).\numberthis\label{eq:whv_tT_lamda_star}
\end{align*}
}
\caption{
Dynamic Programming with Compressed Common and Private Histories in Finite-Horizon setting.}\label{alg:dp_approximatestate}
\end{algorithm}
Like before, the usage of $\min$ and $\argmin$ in Algorithm \ref{alg:dp_approximatestate} is justified due to compactness of $\lamdahtspace{t}$, and the choice of the reduced prescription in \eqref{eq:whv_tT_lamda_star} is arbitrary. Since ``$\Zhtn{t}{0} = \varthetahtn{t}{0} \l( \wtHstn{t}{0}  \r)$'' and ``$\Zhtn{t}{n} = \varthetahtn{t}{n} \l( \Hstn{t}{0}, \Hstn{t}{n} \r) \forall\  n\in[N]$'' hold almost-surely, from standard results in dynamic programming theory, it follows that $\whVtl{t,T}{\lambda} : \wthstnspace{t}{0} \ra \mbb{R} $ (see \eqref{eq:whV_tT_lamda}) satisfies 
\begin{align*}
&\whVtl{t,T}{\lambda} \l( \varthetahtn{t}{0} \l( \hstn{\wt{h}}{t}{0} \r) \r) \\
&\hspace{10pt} = \inf_{v\in\wh{\vvspace}} \E{\wh{v}}{P_1} \l[ \sum_{\tau=t}^{T} \alpha^{\tau - t} \lCost \Big| \wtHstn{t}{0} = \hstn{\wt{h}}{t}{0} \r].
\end{align*}
Therefore, the (finite-horizon) coordination policy $\utn{\wh{v}}{1:T}{\lambda, \star}$ (see \eqref{eq:whv_tT_lamda_star}) minimizes the (finite-horizon) objective \eqref{eq:L_T} amongst all coordination policies that are restricted to draw actions based on ASCS, ASPS, and ASPS-based prescriptions over the horizon $[T]$, i.e.,
\begin{align*}
    L_T\l( \utn{\wh{v}}{1:T}{\lambda, \star}, \lambda \r) &= \inf_{\wh{v}\in\wh{\vvspace}( \varthetahtn{1:T}{0:N} )} L_T\l( v, \lambda \r).
\end{align*}
A natural question is to estimate the finite-horizon optimality gap of the \emph{ASPS-ASCS coordination policy} obtained from Algorithm \ref{alg:dp_approximatestate} from the one obtained via Algorithm \ref{alg:cidecomposition}.

\begin{prop}[Optimality Gap for Finite-Horizon]\label{prop:optimalitygap_T}
Fix $ \lambda \in \mcl{Y}$, $\alpha\in(0,1]$, $T \in \mbb{N}$,  
and $T$-horizon ASPS and ASCS generators $\varthetahtn{1:T}{0:N}$ (see Definitions \ref{dfn:asps_generator_finite_horizon} and \ref{dfn:ascs_generator_finite_horizon}). 
Suppose that Assumption \ref{assmp:boundedcosts} holds. Then, for any $t\in[T]$ and any $\hstn{\wt{h}}{t}{0} \in \wthstnspace{t}{0}$ with $\gt{t}^\star \in \argmin_{\gt{t} \in \gtspace{t} } \Qtl{t,T}{\lambda} \l( \hstn{\wt{h}}{t}{0}, \gt{t} \r)$, there exists $\lamdaht{t} \in \lamdahtspace{t}$ such that
\begin{align*}
&\whQtl{t,T}{\lambda} \l( \varthetahtn{t}{0}\l( \hstn{\wt{h}}{t}{0} \r), \lamdaht{t} \r) - \Qtl{t,T}{\lambda}\l( \hstn{\wt{h}}{t}{0}, \gt{t}^\star \r) \\
&\hspace{80pt} \le M_c\l(t; \alpha, T \r) + M_p \l(t; \alpha, T \r),\\
&\whVtl{t,T}{\lambda} \l( \varthetahtn{t}{0}\l( \hstn{\wt{h}}{t}{0} \r) \r) - \Vtl{t,T}{\lambda}\l( \hstn{\wt{h}}{t}{0} \r) \\
&\hspace{80pt} \le M_c\l(t; \alpha, T \r) + M_p\l(t; \alpha, T \r),
\end{align*}
where,
\begin{align*}
&M_c \l(t; \alpha, T \r) = M_c^{(\eps_{c,1}, \eps_{c,2}, \delta_c, \lambda, \ulbar{c}, \ulbar{d}, 
\constraintv)} \l( t; \alpha, T \r) \\
&\hspace{10pt} \defeq \l( \eps_{c,1} + \|\lambda\|_1 \eps_{c,2} \r) \\ 
&\hspace{20pt} + \alpha \l( \sum_{\tau = t+1}^{T} \alpha^{T-\tau} \r) \l[ \l( \eps_{c,1} + \eps_{c,2} \| \lambda \|_1 + N\l( \alpha, T\r) \delta_c \r) \r],\numberthis\label{eq:M_c}\\
&M_p \l(t; \alpha, T \r) = M_p^{(\eps_{p,1}, \eps_{p,2}, \delta_p, \lambda, \ulbar{c}, \ulbar{d}, 
\constraintv)} \l(t; \alpha, T \r) \\
&\hspace{10pt} \defeq \l( \eps_{p,1} + \|\lambda\|_1 \eps_{p,2} \r)\l( \sum_{\tau=t}^{T} \alpha^{T-\tau} \r) \\
&\hspace{20pt} + \alpha \l( \sum_{i=0}^{T-t-1} \sum_{j=0}^{T-t-1-i} \alpha^{i+j} \r) \l[ \l( \eps_{p,1} + \|\lambda\|_1 \eps_{p,2} \r. \r. \\
&\hspace{150pt} \l. \l. + N \l( \alpha, T \r)\delta_p \r)  \r],\numberthis\label{eq:M_p}\\
&N\l( \alpha, T \r) = N^{(\lambda,\ulbar{c},\ulbar{d},\constraintv)} \l( \alpha, T \r) \\
&\hspace{0pt} \defeq  \sum_{\tau=1}^{T} \alpha^{\tau-1} \l[ \ulbar{c} + \|\lambda\|_1 \l( \ulbar{d} + \frac{1}{2} \l( \max_k \constraintv_k - \min_k \constraintv_k \r) \r)  \r].\numberthis\label{eq:N_alpha_T}
\end{align*}
\end{prop}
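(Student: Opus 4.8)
The plan is to establish the two displayed bounds \emph{simultaneously} by backward induction on $t$, from $t=T$ down to $t=1$, using the exact and approximate dynamic-programming recursions \eqref{eq:Q_tT_lamda}--\eqref{eq:v_tT_lamda_star} and \eqref{eq:whQ_tT_lamda}--\eqref{eq:whv_tT_lamda_star}. The induction hypothesis carried forward is the value-gap bound $\whVtl{t+1,T}{\lambda}(\varthetahtn{t+1}{0}(\hstn{\wt{h}}{t+1}{0}))-\Vtl{t+1,T}{\lambda}(\hstn{\wt{h}}{t+1}{0})\le M_c(t+1;\alpha,T)+M_p(t+1;\alpha,T)$ for every $\hstn{\wt{h}}{t+1}{0}$; at the base step $t=T$ this is vacuous because $\Vtl{T+1,T}{\lambda}\equiv\whVtl{T+1,T}{\lambda}\equiv 0$.

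For the inductive step I would fix $\hstn{\wt{h}}{t}{0}$ and an exact optimizer $\gt{t}^\star$ from \eqref{eq:v_tT_lamda_star}, so that $\Qtl{t,T}{\lambda}(\hstn{\wt{h}}{t}{0},\gt{t}^\star)=\Vtl{t,T}{\lambda}(\hstn{\wt{h}}{t}{0})$, and then \emph{construct} a reduced prescription $\lamdaht{t}\in\lamdahtspace{t}$ by collapsing $\gt{t}^\star$ onto ASPS-equivalence classes -- concretely, for each $n$ and each reachable $\zhtn{t}{n}\in\zhtnspaces{t}{n}$ picking a representative private history $\hstn{h}{t}{n}$ with $\varthetahtn{t}{n}(\hstn{\wt{h}}{t}{0},\hstn{h}{t}{n})=\zhtn{t}{n}$ and setting $\lamdahtn{t}{n}(\zhtn{t}{n})=(\gt{t}^\star)^{(n)}(\hstn{h}{t}{n})$. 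Since $\whVtl{t,T}{\lambda}(\varthetahtn{t}{0}(\hstn{\wt{h}}{t}{0}))=\min_{\lamdaht{t}\in\lamdahtspace{t}}\whQtl{t,T}{\lambda}(\varthetahtn{t}{0}(\hstn{\wt{h}}{t}{0}),\lamdaht{t})$ is no larger than $\whQtl{t,T}{\lambda}(\varthetahtn{t}{0}(\hstn{\wt{h}}{t}{0}),\lamdaht{t})$ for this particular choice, the value-gap bound at $t$ reduces to the $Q$-gap bound $\whQtl{t,T}{\lambda}(\varthetahtn{t}{0}(\hstn{\wt{h}}{t}{0}),\lamdaht{t})-\Qtl{t,T}{\lambda}(\hstn{\wt{h}}{t}{0},\gt{t}^\star)\le M_c(t;\alpha,T)+M_p(t;\alpha,T)$, i.e.\ the first displayed inequality. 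To prove the $Q$-gap bound I would decompose the difference into (i) an immediate-cost discrepancy between $\mbb{E}[\lCost\mid\cdot]$ under the compressed and the full conditioning, and (ii) $\alpha$ times a future-cost discrepancy between $\mbb{E}[\whVtl{t+1,T}{\lambda}(\Zhtn{t+1}{0})\mid\cdot]$ and $\mbb{E}[\Vtl{t+1,T}{\lambda}(\wtHstn{t+1}{0})\mid\cdot]$. Term (i) I would telescope through two substitutions: first replace the full private histories by the ASPS and $\gt{t}^\star$ by $\lamdaht{t}$, controlled by (ASPS2.1) and (ASPS2.2); then replace the full common history by the ASCS, controlled by (ASCS2.1) and (ASCS2.2). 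Because $\lCost=c+\dotp{\lambda}{d-\constraintv}$, combining the $c$- and $d$-estimates with the weight vector $\lambda$ produces the leading terms $\eps_{p,1}+\|\lambda\|_1\eps_{p,2}$ and $\eps_{c,1}+\|\lambda\|_1\eps_{c,2}$ (the factor-$\tfrac14$ slack in Definitions \ref{dfn:asps_generator_finite_horizon}--\ref{dfn:ascs_generator_finite_horizon} being exactly what is needed, since each condition is invoked at several places, on both the $Q$- and $V$-sides). Term (ii) I would split as $\mbb{E}[(\whVtl{t+1,T}{\lambda}-\Vtl{t+1,T}{\lambda})(\cdot)]$ -- bounded by the induction hypothesis $M_c(t+1;\alpha,T)+M_p(t+1;\alpha,T)$ -- plus the change in $\mbb{E}[\Vtl{t+1,T}{\lambda}(\wtHstn{t+1}{0})]$ caused by swapping the full conditioning for the compressed one; the latter I would bound using (ASPS3) and (ASCS3), which say the next-observation law moves by at most $\delta_p/8$ and $\delta_c/8$ in total variation, together with the fact that, after the harmless constant shift of $\lCost$ that leaves \eqref{eq:v_tT_lamda_star} unchanged, $\Vtl{t+1,T}{\lambda}$ has oscillation at most $2N(\alpha,T)$ -- which is precisely where the $\tfrac12(\max_k\constraintv_k-\min_k\constraintv_k)$ in \eqref{eq:N_alpha_T} appears, since centering $\constraintv$ shrinks $|\dotp{\lambda}{d-\constraintv}|$. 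Gathering the $\alpha$, the geometric discounting, and these estimates yields the $N(\alpha,T)\delta_c$ and $N(\alpha,T)\delta_p$ contributions.

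Finally I would unroll the one-step recursion $e_t\le[\text{immediate}]+\alpha\big([\text{observation-swap}]+e_{t+1}\big)$ with $e_{T+1}=0$; the single geometric sums $\sum_{\tau}\alpha^{T-\tau}$ in $M_c$ come out directly, while the nested double sum $\sum_i\sum_j\alpha^{i+j}$ in $M_p$ arises because each future-step ASPS error must itself be re-expanded through the ASCS recursion (ASCS1) before it is discounted, so it accumulates twice over. The hard part will be term (ii): propagating a one-step total-variation perturbation of the observation law through the finite-horizon value functions while keeping the multiplicative constant tight (the $N(\alpha,T)$ oscillation bound and the $\constraintv$-centering trick), and -- hand in hand with this -- the careful bookkeeping of how the private-compression error and the common-compression error interleave over the horizon so that the accumulated constants match \eqref{eq:M_c}--\eqref{eq:N_alpha_T} term for term. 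A second, more technical point requiring care is checking that the pointwise-in-history guarantees (ASPS2.1)--(ASPS3) and (ASCS2.1)--(ASCS3) really do control the corresponding \emph{conditional-expectation} differences once $\gt{t}^\star$ has been collapsed onto ASPS classes.
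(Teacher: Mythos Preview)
Your proposal is correct and follows precisely the methodology the paper invokes: the paper's own proof is omitted, stating only that it ``follows the same methodology as in \cite{hsu22}[Theorem~7]'' with the present result specializing to that theorem when $\alpha=1$ and $\lambda=0$. The backward-induction scheme you outline---constructing $\lamdaht{t}$ by collapsing $\gt{t}^\star$ onto ASPS equivalence classes, splitting the $Q$-gap into an immediate-cost term (handled by ASPS2.1/2.2 then ASCS2.1/2.2, with the $\|\lambda\|_1$ weighting coming from $\un{l}{\lambda}=c+\dotp{\lambda}{d-\constraintv}$) and a discounted future-cost term (handled by the induction hypothesis plus the total-variation bounds ASPS3/ASCS3 against the oscillation $2N(\alpha,T)$ of the centered value function), and then unrolling the recursion to produce the single and double geometric sums in \eqref{eq:M_c}--\eqref{eq:M_p}---is exactly that methodology adapted to the present $(\alpha,\lambda)$-parametrized cost.
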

\begin{proof}
The proof follows the same methodology as in \cite{hsu22}[Theorem 7]. For brevity, it is omitted. It is helpful to note that the main result of \cite{hsu22} (namely Theorem 7) follows as a special case of this proposition (take $\alpha=1$ and $\lambda = 0$).
\end{proof}
Propositions \ref{prop:VtT_vs_Vt} and \ref{prop:optimalitygap_T} yield the following corollary.
\begin{cor}\label{cor:optimalitygap_T}
    Fix $\lambda\in\mcl{Y}$, $\alpha\in(0,1)$, $T \in \mbb{N}$, and $T$-horizon ASPS and ASCS generators $\varthetahtn{1:T}{0:N}$ (see Definitions \ref{dfn:asps_generator_finite_horizon} and \ref{dfn:ascs_generator_finite_horizon}). Suppose that Assumption \ref{assmp:boundedcosts} holds. Then, for any $t\in[T]$ and any $\hstn{\wt{h}}{t}{0} \in \wthstnspace{t}{0}$, the following relation holds between $\whVtl{t,T}{\lambda} \l( \varthetahtn{t}{0}\l(\hstn{\wt{h}}{t}{0}\r) \r) $ and $\Vtl{t}{\lambda} \l( \hstn{\wt{h}}{t}{0} \r) $.
    \begin{align*}
        &\whVtl{t,T}{\lambda} \l( \varthetahtn{t}{0}\l(\hstn{\wt{h}}{t}{0}\r) \r) + \frac{\alpha^{T-t+1}}{1-\alpha} \un{\udl{l}}{\lambda} \\
        &\hspace{30pt} - M_c\l(t;\alpha, T \r) - M_p\l(t;\alpha, T \r) \le \Vtl{t}{\lambda} \l( \hstn{\wt{h}}{t}{0} \r) \\
        &\hspace{90pt} \le \whVtl{t,T}{\lambda} \l( \varthetahtn{t}{0}\l(\hstn{\wt{h}}{t}{0}\r) \r) + \frac{\alpha^{T-t+1}}{1-\alpha} \un{\ov{l}}{\lambda}.
    \end{align*}
\end{cor}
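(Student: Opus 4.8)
The plan is to derive the two-sided estimate by concatenating Proposition~\ref{prop:VtT_vs_Vt} (which brackets $\Vtl{t}{\lambda}$ between $\Vtl{t,T}{\lambda}$ shifted by the geometric-tail terms) with Proposition~\ref{prop:optimalitygap_T} (which controls the ASPS--ASCS optimality gap $\whVtl{t,T}{\lambda}-\Vtl{t,T}{\lambda}$), supplemented by one elementary monotonicity remark. Fix $\lambda\in\mcl{Y}$, $\alpha\in(0,1)$, $T\in\mbb{N}$, $t\in[T]$, and $\hstn{\wt{h}}{t}{0}\in\wthstnspace{t}{0}$; since Assumption~\ref{assmp:boundedcosts} is in force, both propositions apply for this common tuple of parameters.

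First I would record the monotonicity remark: $\Vtl{t,T}{\lambda}\l(\hstn{\wt{h}}{t}{0}\r)\le\whVtl{t,T}{\lambda}\l(\varthetahtn{t}{0}\l(\hstn{\wt{h}}{t}{0}\r)\r)$. Indeed, every $\wh{v}\in\wh{\vvspace}(\varthetahtn{1:T}{0:N})$ lifts, via the almost-sure identities $\Zhtn{t}{0}=\varthetahtn{t}{0}(\wtHstn{t}{0})$ and $\Zhtn{t}{n}=\varthetahtn{t}{n}(\Hstn{t}{0},\Hstn{t}{n})$, to an element of $\vvspace$ realizing the same law of play, so the infimum defining $\whVtl{t,T}{\lambda}$ ranges over a subfamily of the policies over which $\Vtl{t,T}{\lambda}$ is optimized, all conditioned on the same event $\wtHstn{t}{0}=\hstn{\wt{h}}{t}{0}$. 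With this in hand, the upper bound is immediate: the right-hand inequality of Proposition~\ref{prop:VtT_vs_Vt} gives $\Vtl{t}{\lambda}(\hstn{\wt{h}}{t}{0})\le\Vtl{t,T}{\lambda}(\hstn{\wt{h}}{t}{0})+\frac{\alpha^{T-t+1}}{1-\alpha}\un{\ov{l}}{\lambda}$, and replacing $\Vtl{t,T}{\lambda}(\hstn{\wt{h}}{t}{0})$ by the larger quantity $\whVtl{t,T}{\lambda}(\varthetahtn{t}{0}(\hstn{\wt{h}}{t}{0}))$ yields the asserted right-hand side.

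For the lower bound I would start from the left-hand inequality of Proposition~\ref{prop:VtT_vs_Vt}, $\Vtl{t}{\lambda}(\hstn{\wt{h}}{t}{0})\ge\Vtl{t,T}{\lambda}(\hstn{\wt{h}}{t}{0})+\frac{\alpha^{T-t+1}}{1-\alpha}\un{\udl{l}}{\lambda}$, and then substitute the rearranged second inequality of Proposition~\ref{prop:optimalitygap_T}, namely $\Vtl{t,T}{\lambda}(\hstn{\wt{h}}{t}{0})\ge\whVtl{t,T}{\lambda}(\varthetahtn{t}{0}(\hstn{\wt{h}}{t}{0}))-M_c(t;\alpha,T)-M_p(t;\alpha,T)$, which reproduces exactly the asserted left-hand side. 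The argument is otherwise routine arithmetic chaining; the only point needing care is the monotonicity step, i.e.\ making the embedding $\wh{\vvspace}(\varthetahtn{1:T}{0:N})\hookrightarrow\vvspace$ precise and checking that both value functions are conditioned on the identical event, so that no additional error is incurred there. One should also note that the strict restriction $\alpha\in(0,1)$ (inherited from Proposition~\ref{prop:VtT_vs_Vt}) is what keeps the factor $\frac{1}{1-\alpha}$ finite.
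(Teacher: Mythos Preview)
Your proposal is correct and matches the paper's intended argument: the paper states only that the corollary follows from Propositions~\ref{prop:VtT_vs_Vt} and~\ref{prop:optimalitygap_T}, and your chaining of the two (together with the monotonicity remark $\Vtl{t,T}{\lambda}\le\whVtl{t,T}{\lambda}\circ\varthetahtn{t}{0}$, which the paper leaves implicit in its identification of $\wh{\vvspace}$ with a subclass of $\vvspace$) is exactly how that deduction goes.
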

\begin{rem}\label{rem:universalbound_optimalitygap}
Assumption \ref{assmp:slatercondition} can be combined with Corollary \ref{cor:optimalitygap_T} to get $\lambda$-independent bounds on the optimality gap. 
\end{rem}

\subsection{Approximate State Representations (Infinite-Horizon)}
We will now extend the notions from the finite-horizon setting to the infinite-horizon case. In doing so, synonymous to the standard MDP literature, our goal is to identify an ASPS-ASCS based fixed-point iteration scheme that can approximate the optimal value function $\Vtl{t}{\lambda}$ and return a policy that depends, in a time-homogeneous way, on the approximate-state processes $\{ \Zhtn{t}{0:N}\}_{t=1}^{\infty}$.

\begin{dfn}[Infinite-Horizon ASPS-Generator]\label{dfn:asps_generator_infinite_horizon}
Let $\zhnspace{1:N}$ be a collection of topological spaces. A collection $ \varthetahn{1:N} $ of compression-functions where $\varthetahn{n}$ maps $\cup_{t=1}^{\infty} \hstnspace{t}{0} \times \hstnspace{t}{n}$ to $\zhnspace{n}$ is called an infinite-horizon $\l(\eps_{p,1}, \eps_{p,2}, \delta_p \r)$-ASPS generator if the process $\{ \Zhtn{t}{1:N} \}_{t=1}^{\infty}$, with $\Zhtn{t}{n} = \varthetahn{n}( \Hstn{t}{0}, \Hstn{t}{n} ) $ almost-surely, satisfies ASPS-1, ASPS-2.1, ASPS-2.2, and ASPS-3 along with the addition that the evolution functions do not depend on time, i.e., $\phihtn{t}{n} = \phihtn{t'}{n}$.
\end{dfn}

\begin{dfn}[Infinite-Horizon ASCS-Generator]\label{dfn:ascs_generator_infinite_horizon}
Let $\zhnspace{0}$ be a topological space. A compression-function $ \varthetahn{0} $ where $\varthetahn{0}$ maps $\cup_{t=1}^{\infty}\wthstnspace{t}{0}$ to $\zhnspace{0}$ is called an infinite-horizon $\l(\eps_{c,1}, \eps_{c,2}, \delta_c \r)$-ASCS generator if the process $\{ \Zhtn{t}{0} \}_{t=1}^{\infty}$, with $\Zhtn{t}{0} = \varthetahn{0}( \wtHstn{t}{0} ) $ almost-surely, satisfies ASCS-1, ASCS-2.1, ASCS-2.2, and ASCS-3, along with the addition that 
\begin{enumerate}
    \item the evolution functions do not depend on time, i.e., $\phihtn{t}{0} = \phihtn{t'}{0}$; and
    \item the conditional expectations $ \mbb{E}_{P_1} \l[ c\l( \Stt{t}, \At{t} \r) \Big| \zhtn{t}{0}, \lamdaht{t} \r] $ and $ \mbb{E}_{P_1} \l[ d\l( \Stt{t}, \At{t} \r) \Big| \zhtn{t}{0}, \lamdaht{t} \r] $ do not depend on time.
\end{enumerate}

\end{dfn}
Let us denote the range of $\varthetahn{n}$ by $\zhnspaces{n}$, i.e., $\zhnspaces{0} \defeq \varthetahn{0} \l( \cup_{t=1}^{\infty} \wthstnspace{t}{0} \r)$ and $\zhnspaces{n} \defeq \varthetahn{n} \l( \cup_{t=1}^{\infty} \hstnspace{t}{0} \times \hstnspace{t}{n} \r)$ for all $n \in [N]$. Then the above definition leads to a time-invariant ASPS-based prescription space 
$\lamdahspace = \lamdahnspace{1:N}$ where $ \lamdahnspace{n} \defeq \{ \lamdahn{n} : \zhnspaces{n} \mapsto \m{\anspace{n}} \}$. We note that $\lamdahspace$ is in one-to-one correspondence with the set
\begin{align*}
    \xtspace{\lamdahspace} \defeq \prod_{n=1}^{N} \prod_{ \zhn{n} \in \zhnspaces{n} } \m{\anspace{n}; \zhn{n}}.\numberthis\label{eq:xlamdahspace}
\end{align*}
which is a compact (and convex) space by Tychonoff's theorem (see Proposition \ref{prop:tychonoff}), and is also metrizable (see Proposition \ref{prop:metrizability}). From hereon, we will assume $\lamdahspace$ to have the same topology as $\xtspace{\lamdahspace}$.

Definitions \ref{dfn:asps_generator_infinite_horizon} and \ref{dfn:ascs_generator_infinite_horizon} 
naturally lead to an approximate Bellman-type operator $\wh{B} : \{ \zhnspaces{0} \ra \mbb{R} \} \ra \{\zhnspaces{0} \ra \mbb{R}\} $ defined as follows: for a uniformly bounded function $\whVl{\lambda} : \zhnspaces{0} \ra \mbb{R}$,
\begin{align*}
    \l[ \wh{B} \whVl{\lambda} \r] \l( \zhn{0} \r) &\defeq \min_{\lamdah \in \lamdahspace} \mbb{E}_{P_1} \l[ \lCost \r. \\
    &\hspace{-30pt} \l. + \alpha \whVl{\lambda}\l( \Zhtn{t+1}{0} \r) \Big| \Zhtn{t}{0} = \zhtn{t}{0}, \Lamdaht{t} = \lamdah \r].\numberthis\label{eq:whB_whV}
\end{align*}
Note that with the definitions of infinite-horizon ASPS and ASCS (and the time-homogeneous prescriptions), the expectation on the right-hand-side in \eqref{eq:whB_whV} does not depend on time, and due to discounting ($\alpha\in(0,1)$), the operator $\wh{B}$ is a contraction under the supremum norm. Therefore, under Assumption \ref{assmp:boundedcosts}, the fixed point equation $\whVl{\lambda} = \wh{B} \whVl{\lambda}$ has a unique bounded solution according to the Banach fixed-point theorem, (the vector space of uniformly bounded functions with the supremum norm is a complete metric space, see Proposition \ref{prop:banach}).

\begin{thm}[Optimality Gap for Infinite-Horizon]\label{thm:optimalitygap}
Fix $\alpha\in(0,1)$ and $\lambda\in\mcl{Y}$, and infinite-horizon ASPS and ASCS generators $\varthetahn{0:N}$ (see Definitions \ref{dfn:asps_generator_infinite_horizon} and \ref{dfn:ascs_generator_infinite_horizon}). 
Suppose that Assumption \ref{assmp:boundedcosts} holds. Consider the fixed point equation \eqref{eq:whB_whV} which we rewrite as follows:
\begin{align*}
     \whVl{\lambda} \l( \zhn{0} \r) &\defeq \min_{\lamdah \in \lamdahspace} \whQl{\lambda} \l( \zhn{0}, \lamdah \r),\numberthis \label{eq:whVl} \\ 
    \whQl{\lambda} \l( \zhn{0}, \lamdah \r) &\defeq \mbb{E}_{P_1} \l[ \lCost \r. \\
    &\hspace{-30pt} \l. + \alpha \whVl{\lambda}\l( \Zhtn{t+1}{0} \r) \Big| \Zhtn{t}{0} = \zhtn{t}{0}, \Lamdaht{t} = \lamdah \r].\numberthis\label{eq:whQl}
\end{align*}
Let $ \whVl{\lambda,\star}$ denote the fixed-point of \eqref{eq:whB_whV} and $\whQl{\lambda, \star}$ denote the corresponding prescription-value function. Then, for any $t\in\mbb{N}$ and any $\hstn{\wt{h}}{t}{0} \in \wthstnspace{t}{0}$ with $\gt{t}^\star \in \argmin_{\gt{t} \in \gtspace{t} } \Qtl{t}{\lambda} \l( \hstn{\wt{h}}{t}{0}, \gt{t} \r)$, there exists $\lamdah \in \lamdahspace$ such that
\begin{align*}
&\whQl{\lambda,\star}\l( \varthetahn{0} \l( \hstn{\wt{h}}{t}{0} \r), \lamdah \r) - M_c \l( \alpha, \infty \r) - M_p\l( \alpha, \infty \r) \\
&\hspace{40pt} \le \Qtl{t}{\lambda}\l( \hstn{\wt{h}}{t}{0}, \gt{t}^\star \r) \le \whQl{\lambda,\star}\l( \varthetahn{0} \l( \hstn{\wt{h}}{t}{0} \r), \lamdah \r),\\
&\whVl{\lambda}\l( \varthetahn{0} \l( \hstn{\wt{h}}{t}{0} \r) \r) - M_c\l( \alpha, \infty \r) - M_p\l( \alpha, \infty \r) \\
&\hspace{40pt} \le \Vtl{t}{\lambda}\l( \hstn{\wh{h}}{t}{0} \r) \le \whVl{\lambda}\l( \varthetahn{0} \l( \hstn{\wt{h}}{t}{0} \r) \r),
\end{align*}
where $\Vtl{t}{\lambda}$ and $\Qtl{t}{\lambda}$ were defined in \eqref{eq:V_t_lamda} and \eqref{eq:Q_t_lamda}, respectively.
\end{thm}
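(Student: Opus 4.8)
The idea is to obtain Theorem~\ref{thm:optimalitygap} by letting the horizon $T\to\infty$ in the finite-horizon estimate of Corollary~\ref{cor:optimalitygap_T}, exploiting that the infinite-horizon ASPS/ASCS generators are time-invariant and that $\wh{B}$ is a contraction in the supremum norm. As a preliminary step I would identify $M_c(\alpha,\infty)$ and $M_p(\alpha,\infty)$ with the $T\to\infty$ limits of $M_c(t;\alpha,T)$ and $M_p(t;\alpha,T)$ from \eqref{eq:M_c}--\eqref{eq:N_alpha_T}: as $T\to\infty$ one has $\sum_{\tau=t+1}^{T}\alpha^{T-\tau}\to\frac{1}{1-\alpha}$, $\sum_{\tau=t}^{T}\alpha^{T-\tau}\to\frac{1}{1-\alpha}$, $\sum_{i=0}^{T-t-1}\sum_{j=0}^{T-t-1-i}\alpha^{i+j}=\sum_{k=0}^{T-t-1}(k+1)\alpha^{k}\to\frac{1}{(1-\alpha)^2}$, and $N(\alpha,T)\to N(\alpha,\infty)\defeq\frac{1}{1-\alpha}\bigl(\ulbar{c}+\|\lambda\|_1(\ulbar{d}+\tfrac12(\max_k\constraintv_k-\min_k\constraintv_k))\bigr)$; all these limits are finite and independent of $t$, so $M_c(\alpha,\infty)$ and $M_p(\alpha,\infty)$ are well-defined constants depending only on $\alpha$, $\lambda$, and the compression parameters.

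Next I would show $\whVtl{t,T}{\lambda}\to\whVl{\lambda,\star}$ uniformly as $T\to\infty$. Since the infinite-horizon generators are time-invariant, for each $T$ their restriction to $[T]$ is a $T$-horizon ASPS/ASCS generator with the same parameters, and---by conditions (1)--(2) of Definition~\ref{dfn:ascs_generator_infinite_horizon} together with the recursions ASPS-1/ASCS-1---the one-step conditional expectation in \eqref{eq:whQ_tT_lamda} does not depend on $t$. Hence Algorithm~\ref{alg:dp_approximatestate} is exactly value iteration for $\wh{B}$: $\whVtl{T+1,T}{\lambda}\equiv 0$ and $\whVtl{t,T}{\lambda}=\wh{B}\,\whVtl{t+1,T}{\lambda}$, so $\whVtl{t,T}{\lambda}=\wh{B}^{\,T-t+1}(0)$ with $0$ the identically-zero function. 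As $\wh{B}$ is an $\alpha$-contraction with bounded fixed point $\whVl{\lambda,\star}$,
\[
\bigl\|\whVtl{t,T}{\lambda}-\whVl{\lambda,\star}\bigr\|_\infty=\bigl\|\wh{B}^{\,T-t+1}(0)-\wh{B}^{\,T-t+1}(\whVl{\lambda,\star})\bigr\|_\infty\le\alpha^{\,T-t+1}\bigl\|\whVl{\lambda,\star}\bigr\|_\infty,
\]
which tends to $0$; moreover, by time-invariance $\varthetahtn{t}{0}=\varthetahn{0}$ for every $t$.

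Now I would pass to the limit in Corollary~\ref{cor:optimalitygap_T}. Fix $t\in\mbb{N}$ and $\hstn{\wt{h}}{t}{0}\in\wthstnspace{t}{0}$; for every $T\ge t$ that corollary gives
\[
\whVtl{t,T}{\lambda}\bigl(\varthetahn{0}(\hstn{\wt{h}}{t}{0})\bigr)+\tfrac{\alpha^{T-t+1}}{1-\alpha}\un{\udl{l}}{\lambda}-M_c(t;\alpha,T)-M_p(t;\alpha,T)\le\Vtl{t}{\lambda}(\hstn{\wt{h}}{t}{0})\le\whVtl{t,T}{\lambda}\bigl(\varthetahn{0}(\hstn{\wt{h}}{t}{0})\bigr)+\tfrac{\alpha^{T-t+1}}{1-\alpha}\un{\ov{l}}{\lambda}.
\]
Letting $T\to\infty$, the $\tfrac{\alpha^{T-t+1}}{1-\alpha}\un{\udl{l}}{\lambda}$ and $\tfrac{\alpha^{T-t+1}}{1-\alpha}\un{\ov{l}}{\lambda}$ terms vanish, while the previous two paragraphs give $\whVtl{t,T}{\lambda}(\varthetahn{0}(\hstn{\wt{h}}{t}{0}))\to\whVl{\lambda,\star}(\varthetahn{0}(\hstn{\wt{h}}{t}{0}))$, $M_c(t;\alpha,T)\to M_c(\alpha,\infty)$, and $M_p(t;\alpha,T)\to M_p(\alpha,\infty)$; this yields the asserted two-sided bound on $\Vtl{t}{\lambda}(\hstn{\wt{h}}{t}{0})$. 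For the bound involving $\whQl{\lambda,\star}$, I would take $\lamdah\in\argmin_{\lamdah\in\lamdahspace}\whQl{\lambda,\star}(\varthetahn{0}(\hstn{\wt{h}}{t}{0}),\lamdah)$, which is nonempty since the minimum in \eqref{eq:whVl} is attained ($\lamdahspace$ being compact), so that $\whQl{\lambda,\star}(\varthetahn{0}(\hstn{\wt{h}}{t}{0}),\lamdah)=\whVl{\lambda,\star}(\varthetahn{0}(\hstn{\wt{h}}{t}{0}))$; combined with $\Qtl{t}{\lambda}(\hstn{\wt{h}}{t}{0},\gt{t}^\star)=\Vtl{t}{\lambda}(\hstn{\wt{h}}{t}{0})$ (the defining property of $\gt{t}^\star$ via the Bellman equation relating \eqref{eq:V_t_lamda} and \eqref{eq:Q_t_lamda}), the $\whQl{\lambda,\star}$ inequalities reduce to the $\whVl{\lambda,\star}$ inequalities just established.

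The delicate point is the claim in the second paragraph that, in the time-homogeneous regime, Algorithm~\ref{alg:dp_approximatestate} is iteration of the single operator $\wh{B}$---equivalently, that $\mbb{E}_{P_1}[\,\cdot\mid\Zhtn{t}{0}=\zhn{0},\Lamdaht{t}=\lamdah]$ is genuinely $t$-independent; this is precisely what conditions (1)--(2) of Definition~\ref{dfn:ascs_generator_infinite_horizon} and the recursive structure of ASPS/ASCS are designed to secure, as noted in the remark preceding the theorem, so once it is granted the rest is a routine interchange of the limit with convergent geometric series. A minor bookkeeping caveat: before invoking Corollary~\ref{cor:optimalitygap_T}, all of $\varthetahtn{1:T}{0:N}$, $\lamdahtspace{t}$, and the finite-horizon value functions must be read as the time-invariant restrictions of their infinite-horizon counterparts.
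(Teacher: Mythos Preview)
Your proposal is correct and follows essentially the same route as the paper: identify $\whVtl{t,T}{\lambda}$ with the $(T-t+1)$-th iterate of $\wh{B}$ starting from $0$, invoke Corollary~\ref{cor:optimalitygap_T}, and send $T\to\infty$ using the contraction property and the convergence of the error terms. You are simply more explicit than the paper about the limits of $M_c$, $M_p$, and the value-iteration identification, and about deducing the $\whQl{\lambda,\star}$ bounds from the $\whVl{\lambda,\star}$ bounds via the choice of a minimizing $\lamdah$ together with $\Qtl{t}{\lambda}(\hstn{\wt{h}}{t}{0},\gt{t}^\star)=\Vtl{t}{\lambda}(\hstn{\wt{h}}{t}{0})$.
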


\begin{proof}
Consider the following sequence of value-functions: $^{0}\whVl{\lambda} \equiv 0$ and for $i\in \mbb{N}$, $^{i+1}\whVl{\lambda} = \wh{B} \l(^{i}\whVl{\lambda}\r)$. Let $T \in [t,\cdot]_{\mbb{Z}}$; by construction of $\l\{^{i}\whVl{\lambda}\r\}_{i=0}^{\infty}$, it follows that $^{T-t+1}\whVl{\lambda} = \whVtl{t,T}{\lambda} $. Therefore, from Corollary 5.1, 
\begin{align*}
    &^{T-t+1}\whVl{\lambda} \l( \varthetahtn{t}{0}\l(\hstn{\wt{h}}{t}{0}\r) \r) + \frac{\alpha^{T-t+1}}{1-\alpha} \un{\udl{l}}{\lambda} \\
    &\hspace{20pt} - M_c\l( \alpha, T \r) - M_p\l( \alpha, T \r) \le \Vtl{t}{\lambda} \l( \hstn{\wt{h}}{t}{0} \r)\\
    &\hspace{40pt} \le ^{T-t+1}\whVl{\lambda} \l( \varthetahtn{t}{0}\l(\hstn{\wt{h}}{t}{0}\r) \r) + \frac{\alpha^{T-t+1}}{1-\alpha} \un{\ov{l}}{\lambda}.
    \end{align*}
Taking the limit $T\ra\infty$, we get
\begin{align*}
    &\whVl{\lambda,\star} \l( \varthetahtn{t}{0}\l(\hstn{\wt{h}}{t}{0}\r) \r) - M_c\l(\alpha, \infty\r) - M_p\l(\alpha, \infty\r)  \\
    &\hspace{20pt} \le \Vtl{t}{\lambda} \l( \hstn{\wt{h}}{t}{0} \r) \le \whVl{\lambda, \star} \l( \varthetahtn{t}{0}\l(\hstn{\wt{h}}{t}{0}\r) \r).
\end{align*}
By choosing $\lamdah \in \lamdahspace$ to be the minimizing prescription in the corresponding prescription-value function $\whQl{\lambda,\star}$, we get the inequality in \eqref{eq:whQl}.
\end{proof}

Theorem \ref{thm:optimalitygap} gives us a time-homogeneous ASPS-ASCS based coordination policy that is approximately optimal for the $\lambda$-parametrized infinite-horizon objective $L_{\infty}\l( \cdot, \lambda \r)$. Specifically, let $\ut{\wh{v}} = \ut{\wh{v}}{1:\infty} $ be such that for all $t\in\mbb{N}$, $\ut{\wh{v}}{t}\l( \varthetahn{0} \l( \hstn{\wt{h}}{t}{0} \r) \r) \in \argmin_{\lamdah \in \lamdahspace} \whQl{\lambda,\star} \l( \varthetahn{0} \l( \hstn{\wt{h}}{t}{0} \r) \r)$. Then,
\begin{align*}
L_\infty \l( \wh{v}, \lambda \r) - \inf_{v\in \vvspace} L_\infty \l( v, \lambda \r) \le M_c\l( \alpha, \infty \r) + M_p\l(\alpha, \infty \r).
\end{align*}
\section{Primal Dual Type Reinforcement Learning Using Approximate Information States}\label{sec:marl}
\begin{figure*}[!t]
\centering
\subfloat
{\includegraphics[width=.95\linewidth]{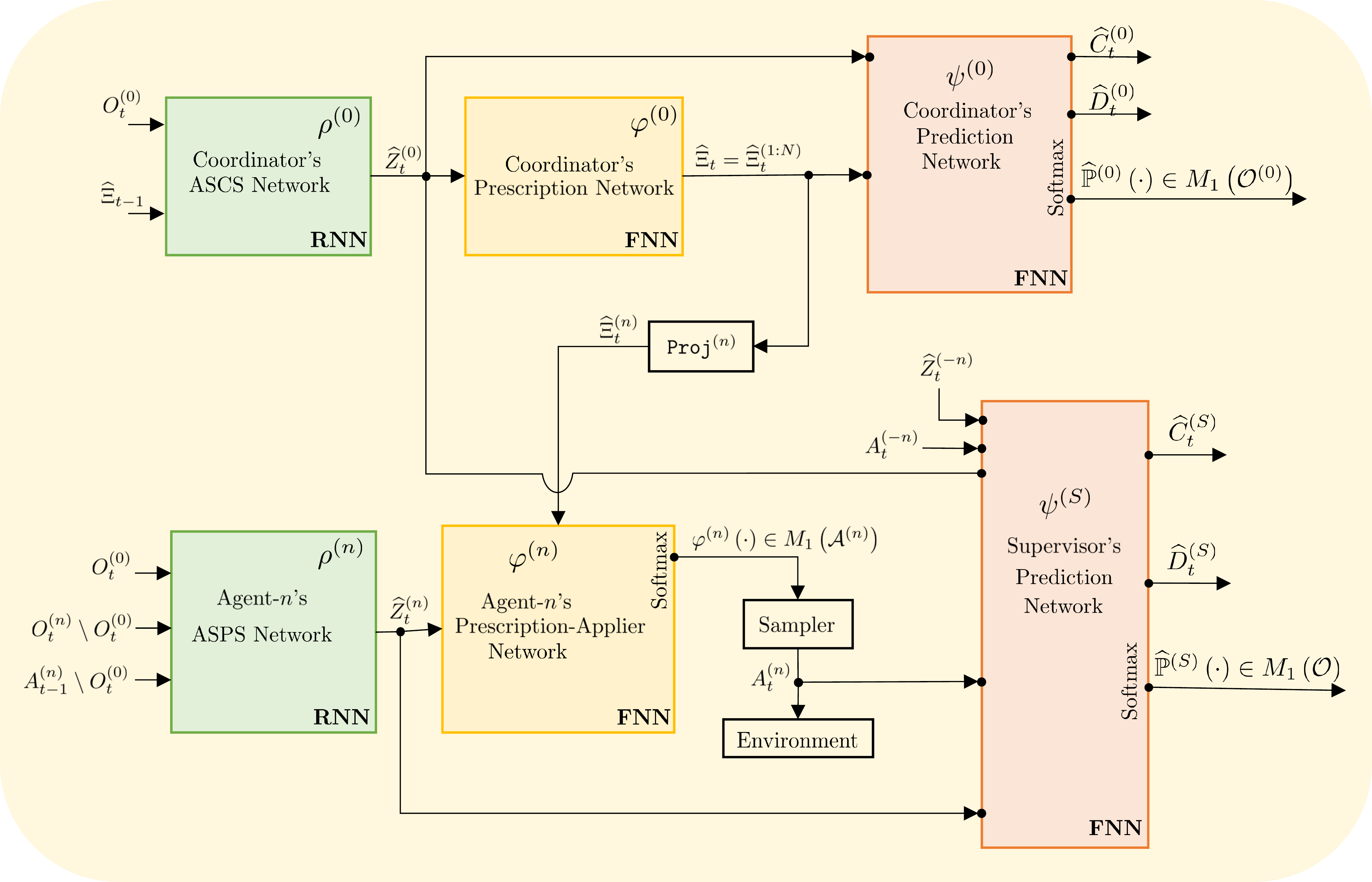}}
\caption{Centralized Training for cooperative MA-C-POMDPs based on ASPS and ASCS. (The superscript $(-n)$ denotes $[N]\setminus \{n\}$). 
Once the training is complete, the execution phase is distributed -- the prediction networks $\psi^{(0)}$ and $\psi^{(S)}$ are no longer needed.
}
\label{fig:marl1}
\end{figure*}
In this section, we use the notions of ASPS and ASCS to develop an algorithmic framework for reinforcement learning in (cooperative) MA-C-POMDPs. This framework will involve recurrent and feed-forward neural-networks as function-approximators, and will be based on \emph{centralized training, distributed execution (CTDE)} wherein the training will be performed in a three time-scale stochastic approximation setup\footnote{See \cite{borkar08_sabook} for details.} as follows: first, the (time-invariant) approximate-state generators will be learnt on fast time-scale (using loss-functions motivated by ASPS-2.1/2.2/3 and ASCS-2.1/2.2/3), then an ASPS-ASCS based coordination policy will be learnt on medium time-scale (using off-the-shelf policy-gradient algorithms), and finally the optimal Lagrange-multipliers vector will be learnt on the slowest time-scale (using projected gradient-ascent).

\subsection{Function-Approximators}
We use the following types of 
function-approximators (see Figure \ref{fig:marl1}):
\begin{enumerate}
    \item \emph{Coordinator's ASCS Network}, modelled by a recurrent neural network (RNN), has inputs $\Otn{t}{0}, \Lamdaht{t-1}$, internal state $\Zhtn{t-1}{0} $, and output $\Zhtn{t}{0}$. It is denoted by $\rho^{(0)}$.
    \item \emph{Agent-$n$'s ASPS Network}, modelled by an RNN with inputs $\Otn{t}{0}, ( \Otn{t}{n}, \Atn{t-1}{n} ) \setminus \Otn{t}{0} $, internal state $\Zhtn{t-1}{n} $, and output $\Zhtn{t}{n}$. It is denoted by $\rho^{(n)}$. 
    \item \emph{Coordinator's Prescription Network}, modelled by a feed-forward neural network (FNN) with input $\Zhtn{t}{0} $ and output $\Lamdahtn{t}{1:N}$. It is denoted by $\varphin{0}$.
    \item \emph{Agent-$n$'s Prescription-Applier Network}, modelled by an FNN (with softmax as its last layer), has inputs $\Zhtn{t}{n}, \Lamdahtn{t}{n}$, and outputs a distribution on $\anspace{n}$.
    \item \emph{Coordinator's Prediction Network}, modelled by an FNN, has inputs $ \Zhtn{t}{0}, \Lamdahtn{t}{1:N}$, and outputs $\utn{\wh{C}}{t}{0}, \utn{\wh{D}}{t}{0}$ and $\un{\wh{\pr}}{0}\l( \cdot \r) \in \m{\onspace{0}}$. It is denoted by $\psi^{(0)}$. Here, $\utn{\wh{C}}{t}{0}$ and $\utn{\wh{D}}{t}{0}$ respectively serve as estimates of the conditional expectation of $\cCost$ and $\dCost$ given $\Zhtn{t}{0}, \Lamdahtn{t}{1:N}$ (see ASCS-2.1/2.2) and $\un{\wh{\pr}}{0}\l( \cdot \r)$ serves as an estimate of the conditional distribution of $\Otn{t+1}{0}$ given $\Zhtn{t}{0}, \Lamdahtn{t}{1:N}$ (see ASCS-3).
    \item \emph{Supervisor's Prediction Network}, modelled by an FNN, has inputs $ \Zhtn{t}{0:N}, \Atn{t}{1:N}$, and outputs $\utn{\wh{C}}{t}{S}, \utn{\wh{D}}{t}{S}$ and $\un{\wh{\pr}}{S}\l( \cdot \r) \in \m{\onspace{0:N}}$. It is denoted by $\psi^{(S)}$. Here, $\utn{\wh{C}}{t}{S}$ and $\utn{\wh{D}}{t}{S}$ respectively serve as estimates of the conditional expectation of $\cCost$ and $\dCost$ given $\wtHstn{t}{0}, \Zhtn{t}{1:N}, \Atn{t}{1:N}$ (see ASPS-2.1/2.2), and $\un{\wh{\pr}}{S}\l( \cdot \r)$ serves as an estimate of the conditional distribution of $\Ot{t+1}$ given $\wtHstn{t}{0}, \Zhtn{t}{1:N}, \Atn{t}{1:N}$ (see ASCS-3).
    
    Here, \emph{supervisor} is a omniscient entity that can access the union of the information of all agents and is needed because the approximation criteria in ASPS-2.1/2.2/3 require the knowledge of the tuples $\Zhtn{t}{0:N}$ and $\Atn{t}{1:N}$.
\end{enumerate}

\subsection{Centralized Training Distributed Execution}
The architectural setup of the aforementioned function-approximators is shown pictorially in \figurename \ref{fig:marl1}. Here, the networks $\rho \defeq \rhon{0:N}$ and $ \varphi \defeq \varphin{0:N}$ collectively define a parametrized ASPS-ASCS coordination policy which we denote by $\ut{\wh{v}}{\rho, \varphi}$. The state networks $\rho$ collectively generate ASPS and ASCS $\Zhtn{t}{0:N}$. The coordinator's prescription network $\varphin{0}$ takes the generated ASCS $\Zhtn{t}{0}$ as input and outputs a \emph{pseudo-prescription} tuple $\Lamdaht{t}$.\footnote{We use the term \emph{pseudo-prescription} because the original interpretation of a prescription is lost.} Then, each agent-$n$'s prescription-applier network $\varphin{n}$ takes $\Zhtn{t}{n}$ and 
$\Lamdahtn{t}{n}$ as input and outputs a distribution on $\anspace{n}$. This distribution is used by the agent to draw its action $\Atn{t}{n}$. 
Finally, the environment generates the next observations in response to the joint-action $\At{t}$. 

\begin{algorithm}[t]
\DontPrintSemicolon

\KwInput
{
Initial weights of all neural-networks 
$\l( \utn{\rho}{0}{0:N}, \utn{\varphi}{0}{0:N}, \utn{\psi}{0}{0}, \utn{\psi}{0}{S} \r) $.}
\Parameter{
Batch-size ($B$), 
simulated-episodes' horizon ($T$), 
Learning schedules $\{ \delta_{1,i}, \delta_{2,i}, \delta_{3,i} \}_{i=1}^{\infty}$ satisfying \eqref{eq:stepsizes}, $maxiters$.
}
\KwOutput{
Learned weights of all neural-networks.
}
$i \la 0$.\\
\Repeat{convergence or $i > maxiters$.}{
Collect a set of $B$ trajectories namely $\l\{ \tau_j \r\}_{j=1}^{B}$, each of horizon $T$, by running the ASPS-ASCS based coordination policy $\ut{\wh{v}}{\rho_i,\varphi_i}$. (See \eqref{eq:tauj}).\label{line:alg3:trajectories}

\tcc{Coordinator's Backprop}
Compute $R_{ \un{\rho}{0}, \un{\psi}{0} } \l( \l\{ \tau_j \r\}_{j=1}^{B} \r)$ according to \eqref{eq:R_rho0_psi0}.\label{line:alg3:R_rho0_psi0}

\nonl \begin{align*}
\begin{bmatrix}
\un{\rho_{i+1}}{0}\\
\un{\psi_{i+1}}{0}
\end{bmatrix} \la 
\begin{bmatrix}
\un{\rho_{i}}{0}\\
\un{\psi_{i}}{0}
\end{bmatrix}
- \delta_{1,i} \nabla_{\un{\rho}{0}, \un{\psi}{0} } R_{\un{\rho}{0}, \un{\psi}{0}} \big|_{\un{\rho_i}{0}, \un{\psi_i}{0}} . 
\end{align*}

\tcc{Supervisor's Backprop}
\nl Compute $R_{ \rho, \un{\psi}{S} } \l( \l\{ \tau_j \r\}_{j=1}^{B} \r)$ according to \eqref{eq:R_rho_psiS}.\label{line:alg3:R_rho_psiS}
\nonl \begin{align*}
&\begin{bmatrix}
\un{\rho_{i+1}}{0}\\
\un{\rho_{i+1}}{1:N}\\
\un{\psi_{i+1}}{S}
\end{bmatrix} \la 
\begin{bmatrix}
\un{\rho_{i+1}}{0}\\
\un{\rho_{i}}{1:N}\\
\un{\psi_{i}}{S}
\end{bmatrix}
\\
&\hspace{60pt} - \delta_{1, i} \nabla_{\rho, \un{\psi}{S} } R_{\rho, \un{\psi}{S}} \big|_{\un{\rho_{i+1}}{0}, \un{\rho_i}{1:N}, \un{\psi_i}{S} } . 
\end{align*}

\tcc{Gradient-descent in the primal}
\nl Compute cost-to-go terms $\{ \{ g_{j,t} \}_{t=1}^{T} \}_{j=1}^{B}$ given by \eqref{eq:gjt}.\label{line:alg3:gjt}

Compute $\wh{\nabla_{\varphi}L_{\infty}}\l(\ut{\wh{v}}{\rho,\varphi} ,\lambda  \r) \big|_{\varphi_i}$ according to \eqref{eq:pg_estimate_reinforce}.\label{line:alg3:pg_estimate}

\nonl \begin{align*}
    \begin{bmatrix}
    \un{\varphi_{i+1}}{0}\\
    \un{\varphi_{i+1}}{1:N}
    \end{bmatrix}
    &\la
    \begin{bmatrix}
    \un{\varphi_{i}}{0}\\
    \un{\varphi_{i}}{1:N}
    \end{bmatrix}
    - \delta_{2, i}
    \wh{\nabla_{\varphi}L_{\infty}}\l(\ut{\wh{v}}{\rho,\varphi} ,\lambda  \r) \big|_{\varphi_i}.
\end{align*}

\tcc{Gradient-ascent in the dual}
\nl Compute $\wh{\nabla_{\lambda}L_{\infty}}\l(\ut{\wh{v}}{\rho,\varphi} ,\lambda  \r) \big|_{\lambda_i}$ according to \eqref{eq:lg_estimate}.\label{line:alg3:lg_estimate}

\nonl \begin{align*}
    \lambda_{i+1} \la \lambda_{i} + \delta_{3, i} \wh{\nabla_{\lambda}L_{\infty}}\l(\ut{\wh{v}}{\rho,\varphi} ,\lambda  \r) \big|_{\lambda_i}.
\end{align*}
$i \la i+1$.
}
\caption{Pseudo-code for History-Embedding Based Reinforcement Learning in constrained Multi-Agent POMDPs.}\label{alg:reinforce_macpomdp}
\end{algorithm}

In the above framework, the prediction-networks $\psi^{(0)}$ and $\psi^{(S)}$ are used (during the training phase) to generate estimates of the conditional expectations of immediate-costs and conditional distributions of observations. These estimates are compared with ground-truth realizations to drive the learning of the state-generators $\rho$. As concerns the learning of the coordination policy, the basic idea (synonymous to single-agent learning settings) is to get sample-paths based estimates of the policy-gradient, 
\begin{align*}
\nabla_{\varphi} L_{\infty} \l(\ut{\wh{v}}{\rho, \varphi},\lambda  \r) = \nabla_{\varphi} \E{\ut{\wh{v}}{\rho, \varphi}}{P_1} \l[ \sum_{t=1}^{\infty} \alpha^{t-1} \lCost \r].\numberthis\label{eq:policy_gradient}
\end{align*}
For learning the Lagrange-multipliers vector, sample-bath estimates of the below gradient are used.
\begin{align*}
\nabla_{\lambda} L_{\infty}\l(\ut{\wh{v}}{\rho, \varphi},\lambda \r) = \nabla_{\lambda}  \E{\ut{\wh{v}}{\rho, \varphi}}{P_1} \l[ \sum_{t=1}^{\infty} \alpha^{t-1} \lCost \r].\numberthis\label{eq:lagrangian_gradient}
\end{align*}
Once the training is complete, the execution phase is distributed -- the prediction networks $\psi^{(0)}$ and $\psi^{(S)}$ are no longer needed and (as mentioned earlier) the coordinator's networks can be instantiated by all agents. In the remainder of this section, we give a concrete instance of this framework that is based on multi-agent extension of the (single-agent) REINFORCE algorithm. Extending other policy-gradient algorithms such as actor-critic methods is 
left as an exercise.

\subsection{Three Time-scale Stochastic Approximation - Example Instantiation Based on REINFORCE \cite{sutton98} Algorithm}
For simplicity, we assume that the observation-sets $\onspace{n}$'s are finite. Also, with slight abuse of notation, we will denote the weights of a neural-network by the same letter that is used to denote it. Let $\l\{ \delta_{1,i} \r\}_{i=1}^{\infty}, \l\{ \delta_{2,i} \r\}_{i=1}^{\infty}, \l\{ \delta_{3,i} \r\}_{i=1}^{\infty} $ be three sequences of step-sizes that satisfy the standard three time-scale stochastic approximation conditions \cite{borkar08_sabook}, namely,
\begin{align}
\begin{split}\label{eq:stepsizes}
    \sum_{i=1}^{\infty} \delta_{1, i} = \sum_{i=1}^{\infty} \delta_{2, i} = \sum_{i=1}^{\infty} \delta_{3, i} = \infty,\\
    \sum_{i=1}^{\infty} \delta_{1, i}^2 + \sum_{i=1}^{\infty} \delta_{2, i}^2 + \sum_{i=1}^{\infty} \delta_{3, i}^2 < \infty,\\
    \frac{\delta_{3, i} }{ \delta_{2, i}} , \frac{\delta_{2, i} }{ \delta_{1, i}} \xrightarrow{i\ra\infty} 0.
    \end{split}
\end{align}
In our setup, $\delta_{1,i}$ will be used to learn the approximate-state generator networks, $\delta_{2,i}$ will be used to update the parameters of the ASPS-ASCS based coordination policy, and $\delta_{3,i}$ will be used to update the Lagrange-multipliers vector.

\begin{rem}
In practical implementations, the above conditions are rarely satisfied.
\end{rem}

\subsubsection{Learning the Approximate-State Generator Networks}
To drive the learning of the approximate-state generators, a few definitions are in order.
\begin{enumerate}
    \item To minimize the epsilons in the definitions of ASCS and ASPS, we define the loss-function,
    \begin{align*}
        l_2 &: \mbb{R}\times \mbb{R} \ra \mbb{R}_{\ge 0}\\
        l_2\l(\cdot, \star \r) &= \operatorname{smoothL1}\l( \cdot - \star \r)\\
        &\defeq \begin{cases}
        \frac{1}{2} \l( \cdot - \star \r)^2, &|\cdot-\star|<1,\\
        |\cdot - \star| - \frac{1}{2}, &\text{otherwise}.
        \end{cases}\numberthis\label{eq:l2}
    \end{align*}
    \item To minimize $\delta_c$ in ASCS-3, we define the negative log-likelihood loss-function,
    \begin{align*}
        l_{c,3} : \onspace{0} \times \m{\onspace{0}} &\ra \mbb{R}_{\ge 0} 
        ,\\
        l_{c,3} \l( \on{0}, \wh{\pr}^{(0)}\l( \cdot \r) \r) &\defeq - \log \l(  \wh{\pr}^{(0)}\l( \on{0} \r) + \eta \r).\numberthis\label{eq:lc3}
    \end{align*}
    \item And similarly, to minimize $\delta_p$ in ASPS-3, we define
    \begin{align*}
        l_{p,3} : \ospace \times \m{\ospace} &\ra \mbb{R}_{\ge 0} 
        ,\\
        l_{p,3}\l( o, \wh{\pr}^{(S)}\l(\cdot\r) \r) &\defeq - \log \l( \wh{\pr}^{(S)}\l(o\r) + \eta \r).\numberthis\label{eq:lp3}
    \end{align*}
\end{enumerate}
In \eqref{eq:lc3} and \eqref{eq:lp3}, $\eta$ is a sufficiently-small hyper-parameter to avoid indefinite gradients.

Consider a set of $B$ finite-horizon trajectories of length $T$ generated (independently) using coordination policy $\ut{\wh{v}}{\rho, \varphi} $. We denote this set by $\l\{ \tau_j \r\}_{j=1}^{B}$, where $\tau_j$ is given by
\begin{align*}
\tau_j &\defeq \l\{ o_{j,t}, a_{j,t}, c_{j,t}, d_{j,t}, \zhtn{j,t}{0:N}, \lamdaht{j,t}, \r. \\
&\hspace{20pt} \l. \wh{c}_{j,t}^{(0)}, \wh{d}_{j,t}^{(0)}, \wh{c}_{j,t}^{(S)}, \wh{d}_{j,t}^{(S)}, \wh{\pr}^{(0)}_{j,t}\l( \cdot \r), \wh{\pr}^{(S)}_{j,t}\l( \cdot \r) \r\}_{t=1}^{T}.\numberthis\label{eq:tauj}
\end{align*}
Here, $\star_{j,t}$ denotes the realization of the corresponding variable at time $t$ of the $j^{th}$ trajectory and $T$ denotes the common length of all the $B$ trajectories.\footnote{For the case of infinite-horizon total expected discounted costs, $T$ should preferably be on the order of $\frac{1}{1-\alpha}$.} For learning of ASCS-generator $\rho^{(0)}$ (coupled with the learning of the prediction network $\psin{0}$), we can use the loss-function: 
\begin{align*}
    &l_{\rhon{0}, \psin{0}} \l( c_{j, t}, d_{j, t}, o_{j, t+1}^{(0)}, \wh{c}_{j,t}^{(0)}, \wh{d}_{j,t}^{(0)}, \wh{\pr}_{j,t}^{(0)}\l(\cdot \r) \r) \\
    &\defeq \beta_0 l_2\l( \ut{c}{j,t}, \ut{\wh{c}}{j,t} \r) + \sum_{k=1}^{K} \beta_k l_2 \l( \ut{\l(d_k\r)}{j,t}, \ut{\l(\wh{d}_k\r)}{j,t} \r) \\
    &\hspace{10pt} + \beta_{K+1}l_{c,3} \l( \otn{j,t}{0}, \wh{\pr}_{j,t}^{(0)}\l( \cdot \r) \r).\numberthis\label{eq:l_rho0_psi0}
\end{align*}
Similarly, for learning of the ASPS-generator (coupled with the learning of $\rhon{0}$ and $\psin{S}$), we can use the loss-function:
\begin{align*}
    l_{\rho, \psin{S}} &\defeq \beta'_0 l_2\l( \ut{c}{j,t}, \ut{\wh{c}}{j,t} \r) + \sum_{k=1}^{K} \beta'_k l_2 \l( \ut{\l(d_k\r)}{j,t}, \ut{\l(\wh{d}_k\r)}{j,t} \r) \\
    &\hspace{30pt} + \beta'_{K+1}l_{p,3} \l( \ot{j,t}, \wh{\pr}_{j,t}^{(S)}\l( \cdot \r) \r).\numberthis\label{eq:l_rho_psiS}
\end{align*}
In \eqref{eq:l_rho0_psi0} and \eqref{eq:l_rho0_psi0}, the weights $\beta_k, \beta'_k$ are hyper-parameters that satisfy $\beta_k, \beta'_k > 0, \text{ and } \sum_{k=0}^{K+1} \beta_k = \sum_{k=0}^{K+1}\beta'_k = 1$. The above loss-functions lead to the following empirical risk functions whose gradients can be used to update the networks $\rho, \psin{0}$, and $\psin{S}$.
\begin{align*}
    &R_{ \un{\rho}{0}, \un{\psi}{0} } \l( \l\{ \tau_j \r\}_{j=1}^{B} \r)
    \defeq  \frac{1}{BT} \sum_{j=1}^{B} \sum_{t=1}^{T} l_{\un{\rho}{0},\un{\psi}{0} } 
    \l( \cdot \r),\numberthis\label{eq:R_rho0_psi0}\\
    &R_{ \rho, \un{\psi}{S} } 
    \l( \l\{ \tau_j \r\}_{j=1}^{B} \r)
    \defeq  \frac{1}{BT} \sum_{j=1}^{B} \sum_{t=1}^{T} l_{\un{\rho}{0},\un{\psi}{0} } 
    \l( \cdot \r).\numberthis\label{eq:R_rho_psiS}
\end{align*}
These risk functions can be used to update the state-generator and prediction networks $\rho, \psi^{(0)}, \psi^{(S)}$ (See Lines \ref{line:alg3:R_rho0_psi0} and \ref{line:alg3:R_rho_psiS} in Algorithm \ref{alg:reinforce_macpomdp}).

\subsubsection{Learning Prescription and Prescription-Applier Networks}
Based on the collected trajectories, we define cost-to-go terms,
\begin{align*}
    g_{j,t} \defeq \sum_{t'=t}^{T} \alpha^{t'-t} \l( c_{j,t} + \dotp{\lambda}{d_{j,t}-\constraintv} \r).\numberthis\label{eq:gjt}
\end{align*}
Then, the sample-paths based estimate of $\nabla_{\varphi} L_{\infty} \l(\ut{\wh{v}}{\rho,\varphi},\lambda \r)$, in flavor of the REINFORCE algorithm \cite{sutton98}, is given by
\begin{align*}
&\wh{\nabla_{\varphi} L_{\infty}}\l(\ut{\wh{v}}{\rho,\varphi},\lambda \r) \\
&\hspace{5pt} \defeq \nabla_{\varphi} \frac{1}{B} \sum_{j=1}^{B} \sum_{t=1}^{T}  g_{j, t} \l[ \sum_{n=1}^{N} \log \l( \varphi^{(n)}\l( \atn{j,t}{n}, \zhtn{j,t}{n}, \r. \r. \r.\\
&\hspace{100pt} \l. \l. \l. \proj{n}{ \un{\varphi}{0}\l( \zhtn{j,t}{0} \r) } 
\r)   \r) \r],\numberthis\label{eq:pg_estimate_reinforce}
\end{align*}
which can be used to update the prescription and prescription-applier networks $\varphi$. (See Lines \ref{line:alg3:gjt} and \ref{line:alg3:pg_estimate} in Algorithm \ref{alg:reinforce_macpomdp}).

\subsubsection{Learning the Lagrange-Multiplier}
The sample-paths based estimate of $\nabla_{\lambda} L_{\infty} \l(\ut{\wh{v}}{\rho,\varphi},\lambda \r) $ is simply given by
\begin{align*}
    \wh{\nabla_{\lambda} L_{\infty}} \l( \ut{\wh{v}}{\rho,\varphi} ,\lambda\r) \defeq \l[ 
    \l( \frac{1}{B} \sum_{j=1}^{B} \sum_{t=1}^{T} \alpha^{t-1} d_{j,t}\r) - \constraintv
    \r]^{+},\numberthis\label{eq:lg_estimate}
\end{align*}
which is used to project the updated $\lambda$ onto $\mcl{Y}$ (see Line \ref{line:alg3:lg_estimate}) in Algorithm \ref{alg:reinforce_macpomdp}).
\section{Conclusion}\label{sec:conclusion}
In this work, we studied a cooperative multi-agent constrained POMDP in the setting of infinite-horizon expected total discounted costs. We established strong-duality and existence of a saddle point using Sion's Minimax Theorem -- by giving a suitable topology to the space of all possible policy-profiles and then establishing lower semi-continuity of the Lagrangian function. Then, we laid out the characterization of a universal compression-framework that is independent of the Lagrange-multipliers vector -- therefore easing the learning of the compression-mappings in the learning context. Based on the said compression-framework, we then designed a primal-dual MARL framework based on CTDE and three timescale stochastic approximation with neural-networks for function-approximation.

The theoretical results in this work are the first of its kind and will hopefully encourage investigation of MA-C-POMDPs in more general settings. Furthermore, an empirical evaluation of the proposed primal-dual MARL framework (an extension of single-agent REINFORCE algorithm~\cite{sutton98}) or its actor-critic variants is also an interesting direction.

\appendices


\renewcommand{\theequation}{\thesection.\arabic{equation}}

\section{Intermediary Results for Theorem \ref{thm:strongduality}}\label{sec:appendix:intermediary_results}
\begin{lem}[Equivalence between Behavioral Policy-Profiles and their (decentralized) Mixtures]\label{lem:dominance}
Fix a (factorized) measure $\mu \in \uspacemix$. Then there exists a behavioral policy-profile $\udl{u}=\udl{u}(\mu) \in \uspace$, such that for any $t \in \mbb{N}$, $\hst{h}{t} \in \hstspace{t}$, and $\at{t} \in \aspace$,
\begin{align*}
    p\l( \mu, t, \hst{h}{t}, \at{t} \r) = p\l( u, t, \hst{h}{t}, \at{t} \r),
\end{align*}
where, for brevity and with slight abuse of notation,
\begin{align*}
p\l( \cdot, t, \hst{h}{t}, \at{t} \r) &= \prup{\cdot}{P_1}\l(\Hst{t} = \hst{h}{t}, \At{t} = \at{t} \r), \text{ and}\\
p\l( \cdot, t, \hst{h}{t} \r) &= \prup{\cdot}{P_1}\l(\Hst{t} = \hst{h}{t} \r).
\end{align*}
\end{lem}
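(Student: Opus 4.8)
The plan is to adapt the classical equivalence between mixed and behavioral strategies (Kuhn's theorem) to the decentralized product structure of $\uspacemix$, writing $\mu=\mun{1}\times\cdots\times\mun{N}$. First I would establish a trajectory-level factorization of $p$. For a fixed $u\in\uspace$ and $(\hst{h}{t},\at{t})\in\hstspace{t}\times\aspace$, the history recursions \eqref{eq:Hstn} show that $(\hst{h}{t},\at{t})$ determines the whole observation sequence $\ot{1:t}$ and action sequence $\at{1:t}$, that the event $\{\Hst{t}=\hst{h}{t},\At{t}=\at{t}\}$ coincides with $\{\Ot{1:t}=\ot{1:t},\At{1:t}=\at{1:t}\}$, and that each agent-$n$ prefix $(\hstn{h}{\tau}{0},\hstn{h}{\tau}{n})$, $\tau\le t$, together with $\atn{1:t}{n}$, is a (measurable) function of $(\hstn{h}{t}{0},\hstn{h}{t}{n},\atn{t}{n})$. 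Invoking the Ionescu--Tulcea construction of $\prup{u}{P_1}$ and the fact that the policy factors $\utn{u}{\tau}{n}(\cdot\mid\cdot)$ depend only on observed quantities --- so they pull out of the integral over the unobserved states $\stt{1:t}$ --- one obtains
\[
p(u,t,\hst{h}{t},\at{t})=Q\l(\ot{1:t},\at{1:t-1}\r)\cdot\prod_{\tau=1}^{t}\prod_{n=1}^{N}\utn{u}{\tau}{n}\l(\atn{\tau}{n}\mid\hstn{h}{\tau}{0},\hstn{h}{\tau}{n}\r),
\]
where $Q$ is the policy-independent factor produced by $P_1$ and $\mcl{P}_{tr}$ after integrating out $\stt{1:t}$. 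Since $\prup{\mu}{P_1}=\int_{\uspace}\prup{u}{P_1}\,d\mu(u)$ by definition of the mixed-policy measure and $\mu$ is a product measure, Tonelli's theorem (the $\mun{n}$ are probability measures, and the integrand factorizes across $n$ and is bounded in $[0,1]$) gives
\[
p(\mu,t,\hst{h}{t},\at{t})=Q\l(\ot{1:t},\at{1:t-1}\r)\cdot\prod_{n=1}^{N}\Phi^{(n)}\l(\atn{1:t}{n};\hstn{h}{t}{0},\hstn{h}{t}{n}\r),
\]
with $\Phi^{(n)}\l(\atn{1:t}{n};\hstn{h}{t}{0},\hstn{h}{t}{n}\r)\defeq\int_{\uspacen{n}}\prod_{\tau=1}^{t}\utn{u}{\tau}{n}\l(\atn{\tau}{n}\mid\hstn{h}{\tau}{0},\hstn{h}{\tau}{n}\r)\,d\mun{n}(\un{u}{n})$; each integrand is a finite product of coordinate-evaluation maps on $\uspacen{n}$, hence continuous --- $\nu\mapsto\nu(\{a\})$ is continuous on $\m{\anspace{n}}$ because $\anspace{n}$ is finite and discrete --- and therefore Borel, so $\Phi^{(n)}$ is well defined.

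Next I would build the replicating profile $\udl{u}=\udl{u}(\mu)\in\uspace$ whose agent-$n$ factor telescopes to $\Phi^{(n)}$, namely
\[
\utn{\udl{u}}{t}{n}\l(\atn{t}{n}\mid\hstn{h}{t}{0},\hstn{h}{t}{n}\r)\defeq\frac{\Phi^{(n)}\l(\atn{1:t}{n};\hstn{h}{t}{0},\hstn{h}{t}{n}\r)}{\Phi^{(n)}\l(\atn{1:t-1}{n};\hstn{h}{t-1}{0},\hstn{h}{t-1}{n}\r)},
\]
with the conventions that the empty integrand equals $1$ (so $\Phi^{(n)}$ of the null prefix is $\mun{n}(\uspacen{n})=1$) and that $\utn{\udl{u}}{t}{n}(\cdot\mid\hstn{h}{t}{0},\hstn{h}{t}{n})$ is set to an arbitrary element of $\m{\anspace{n}}$ whenever the denominator vanishes. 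Summing the numerator over $\atn{t}{n}\in\anspace{n}$ and using $\sum_{\atn{t}{n}}\utn{u}{t}{n}(\atn{t}{n}\mid\cdot)=1$ pointwise shows $\utn{\udl{u}}{t}{n}(\cdot\mid\hstn{h}{t}{0},\hstn{h}{t}{n})\in\m{\anspace{n}}$; and since $\ospace$, hence each $\hsnspace{n}$, is countable and discrete, measurability of $\utn{\udl{u}}{t}{n}$ as a map $\hstnspace{t}{0}\times\hstnspace{t}{n}\to\m{\anspace{n}}$ is automatic, so $\udl{u}\in\uspace$.

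It then remains to prove the telescoping identity $\prod_{\tau=1}^{t}\utn{\udl{u}}{\tau}{n}(\atn{\tau}{n}\mid\hstn{h}{\tau}{0},\hstn{h}{\tau}{n})=\Phi^{(n)}(\atn{1:t}{n};\hstn{h}{t}{0},\hstn{h}{t}{n})$ for every $n$ and $(\hst{h}{t},\at{t})$. The key monotonicity fact is that $\Phi^{(n)}$ is non-increasing along prefixes, since the integrand $\prod_{\sigma=1}^{\tau}\utn{u}{\sigma}{n}(\cdot)$ is pointwise dominated by $\prod_{\sigma=1}^{\tau-1}\utn{u}{\sigma}{n}(\cdot)$ (each factor lies in $[0,1]$). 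If every denominator $\Phi^{(n)}(\atn{1:\tau-1}{n};\cdot)$, $\tau=1,\dots,t$, is strictly positive, the left-hand product telescopes exactly to $\Phi^{(n)}(\atn{1:t}{n};\cdot)$; otherwise, letting $\tau^\star\le t$ be the least index with $\Phi^{(n)}(\atn{1:\tau^\star}{n};\cdot)=0$, monotonicity forces $\Phi^{(n)}(\atn{1:t}{n};\cdot)=0$, while minimality makes the step-$\tau^\star$ denominator positive and hence $\utn{\udl{u}}{\tau^\star}{n}(\atn{\tau^\star}{n}\mid\cdot)=0$, so the left-hand product also vanishes --- either way the two sides coincide. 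Multiplying over $n\in[N]$, using \eqref{eq:uah} and the step-one factorization of $p(\udl{u},t,\hst{h}{t},\at{t})$, and comparing with the expression for $p(\mu,t,\hst{h}{t},\at{t})$ yields $p(\mu,t,\hst{h}{t},\at{t})=p(\udl{u},t,\hst{h}{t},\at{t})$, which is the claim.

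I expect the main obstacle to be organizational rather than conceptual: writing the policy/environment factorization of $p(u,t,\hst{h}{t},\at{t})$ rigorously through the Ionescu--Tulcea construction, justifying the Tonelli interchange over the product measure $\mu$, and handling the zero-denominator degeneracies carefully. There should be no genuine analytic difficulty, since all the history spaces are countable and discrete and the policy-dependence of every relevant quantity is continuous on the compact metric spaces $\uspacen{n}$.
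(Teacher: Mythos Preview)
Your proposal is correct and follows essentially the same route as the paper: both derive the factorization $p(\mu,t,\hst{h}{t},\at{t})=Q(\ot{1:t},\at{1:t-1})\cdot\prod_{n}\Phi^{(n)}$ via Tonelli on the product measure $\mu=\times_{n}\mun{n}$, then build $\udl{u}$ from ratios. The only cosmetic differences are that the paper defines $\ut{\udl{u}}{t}(\at{t}\mid\hst{h}{t})$ at the joint level as $p(\mu,t,\hst{h}{t},\at{t})/p(\mu,t,\hst{h}{t})$ and then observes this ratio factors across agents, whereas you construct each $\utn{\udl{u}}{t}{n}$ directly from the per-agent $\Phi^{(n)}$ ratios; and the paper verifies equality by forward induction on $t$ rather than your explicit telescoping argument (which handles the zero-denominator degeneracy a bit more carefully).
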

\begin{proof}
Define $\udl{u}=\udl{u}(\mu) \in \uspace$ such that
\begin{align*}
    &\ut{\udl{u}}{t}\l( \at{t} | \hst{h}{t}\ \r) = \prod_{n=1}^{N} 
    \utn{\udl{u}}{t}{n} 
    \l( \atn{t}{n} | \hstn{h}{t}{0}, \hstn{h}{t}{n} \r)  \\
    &= \begin{cases}
    \frac{ p\l( \mu, t, \hst{h}{t}, \at{t} \r)
    }{p\l( \mu, t, \hst{h}{t}\r)}, &\text{if } p\l(\mu, t, \hst{h}{t} \r) \ne 0,\\
    \prod_{n=1}^{N} \frac{1}{|\anspace{n}|}, &\text{otherwise}.
    \end{cases}\numberthis\label{eq:dominance:u}
\end{align*}
The above assignment is correct because the right-hand-side of \eqref{eq:dominance:u} is a fully-factorized function of $\atn{t}{n}$'s.
\begin{align*}
    &p\l( \mu, t, \hst{h}{t}, \at{t} \r) = \int_{U} \mu\l( du \r) \prup{u}{P_1} \l( \Hst{t} = \hst{h}{t}, \At{t} = \at{t} \r)\\
    &= \int_{\uspace} P_1\l( \sspace, \hst{h}{1} \r) \prod_{t'=2}^{t} \pr_{P_1} \l( \ot{t'} | \hst{h}{t'-1}, \at{t'-1} \r)\\
    &\hspace{10pt} \times \prod_{n=1}^{N} \prod_{t'=1}^{t} \utn{u}{t'}{n}\l( \atn{t'}{n} |  \hstn{h}{t'}{0}, \hstn{h}{t'}{n} \r) \mu\l( du \r)
    \\
    &=P_1\l( \sspace, \hst{h}{1} \r) \prod_{t'=1}^{t} \pr_{P_1} \l( \ot{t'} | \hst{h}{t-1}, \at{t'-1} \r)\\
    &\hspace{10pt} \times \prod_{n=1}^{N} \int_{\uspacen{n}}  \prod_{t'=1}^{t} \utn{u}{t'}{n}\l( \atn{t'}{n} |  \hstn{h}{t'}{0}, \hstn{h}{t'}{n} \r) \mun{n}\l( d\un{u}{n}\r),
\end{align*}
where the last equality follows from Tonneli's Theorem (see Proposition \ref{prop:tonneli}). We will now prove, by forward induction, that for all $t\in\mbb{N}$, $\udl{u}$ and $\mu$ induce the same distribution on the pair $\l( \Hst{t}, \At{t} \r)$.

\begin{enumerate}
    \item \textbf{Base Case}: For time $t=1$, let $\ot{1} \in \hstspace{1} = \ospace$ and $\at{1} \in \aspace$. We have
    \begin{align*}
        p\l( \mu, 1, \ot{1}, \at{1}\r) &= P_1\l( \sspace, \ot{1} \r) \int_{\uspace} \mu\l( du \r) \ut{u}{1}\l( \at{1} | \ot{1} \r),
    \end{align*}
    and
    \begin{align*}
        p\l( \udl{u}, 1, \ot{1}, \at{1}\r) &= P_1\l( \sspace, \ot{1} \r) \ut{\udl{u}}{1}\l( \at{1} | \ot{1} \r) \\
        &\hspace{-30pt}= P_1\l( \sspace, \ot{1} \r) \frac{p\l( \mu, 1, \ot{1}, \at{1}\r)}{p\l( \mu, 1, \ot{1}\r)}\\ 
        &
        \hspace{-30pt}=p\l( \mu, 1, \ot{1}, \at{1}\r),
    \end{align*}
    where the last equality follows from $p\l( \mu, 1, \ot{1}\r) = P_1\l( \sspace, \ot{1} \r) $.

    \item \textbf{Induction Step}. Assuming that the statement is true for time $t$, we show that it is true for time $t+1$. Let $\hst{h}{t+1} = \l( \ot{1:t+1}, \at{1:t} \r) = \l( \hst{h}{t}, \at{t}, \ot{t+1} \r) \in \hstspace{t+1}$ and $\at{t+1} \in \aspace$. We have
    \begin{align*}
        p\l(\mu, t+1, \hst{h}{t+1} \r) &= \int_{\uspace} \mu\l( du \r) \prup{u}{P_1} \l( \Hst{t+1} = \hst{h}{t+1} \r)\\
        &\hspace{-60pt} = \int_{\uspace} \mu\l( du \r) \prup{u}{P_1} \l( \Hst{t} = \hst{h}{t}, \At{t} = \at{t}, \Ot{t+1} = \ot{t+1} \r)\\
        &\hspace{-60pt}= \int_{\uspace} \mu\l( du \r) \prup{u}{P_1} \l( \Hst{t} = \hst{h}{t}, \At{t} = \at{t}, \r) \\
        &\hspace{-40pt} \times \prup{u}{P_1} \l( \Ot{t+1} = \ot{t+1} | \Hst{t} = \hst{h}{t}, \At{t} = \at{t} \r)\\
        &\hspace{-60pt}= \int_{\uspace} \mu\l( du \r) \prup{u}{P_1} \l( \Hst{t} = \hst{h}{t}, \At{t} = \at{t}, \r) \\
        &\hspace{-40pt} \times \pr_{P_1} \l( \Ot{t+1} = \ot{t+1} | \Hst{t} = \hst{h}{t}, \At{t} = \at{t} \r)\\
        &\hspace{-60pt}= p\l(\mu, t, \hst{h}{t}, \at{t}\r) \pr_{P_1} \l( \Ot{t+1} = \ot{t+1} | \Hst{t} = \hst{h}{t}, \At{t} = \at{t} \r)\\
        &\hspace{-60pt}\labelrel{=}{eqr:dominance:ind} p\l(\udl{u}, t, \hst{h}{t}, \at{t}\r) \pr_{P_1} \l( \Ot{t+1} = \ot{t+1} | \Hst{t} = \hst{h}{t}, \At{t} = \at{t} \r)\\
        &\hspace{-60pt}=p\l(\udl{u}, t+1, \hst{h}{t+1} \r),
    \end{align*}
where \eqref{eqr:dominance:ind} uses the inductive hypothesis. The above work implies 
\begin{align*}
&p\l(\udl{u}, t+1, \hst{h}{t+1}, \at{t+1} \r) \\
&\hspace{0pt} = p\l(\udl{u}, t+1, \hst{h}{t+1} \r) \cdot\ \ut{\udl{u}}{t+1}\l(\at{t+1} | \hst{h}{t+1} \r) \\
&\hspace{0pt} = p\l(\mu, t+1, \hst{h}{t+1} \r) \frac{p\l(\mu, t+1, \hst{h}{t+1}, \at{t+1} \r)}{p\l(\mu, t+1, \hst{h}{t+1} \r)}\\
&\hspace{0pt} = p\l(\mu, t+1, \hst{h}{t+1}, \at{t+1} \r).
\end{align*}
\end{enumerate}
This completes the proof.
\end{proof}

\begin{cor}\label{cor:lbar_and_l}	
Fix $\lambda \in \mcl{Y}$. For any $\mu \in \uspacemix$, there exists $u = u(\mu) \in \uspace$ such that $\lags{u}{\lambda} = \lagsmix{\mu}{\lambda}$.	
\end{cor}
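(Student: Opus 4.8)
The plan is to let $u = \udl{u}(\mu) \in \uspace$ be exactly the behavioral policy-profile produced by Lemma \ref{lem:dominance}, and then to show that this $u$ reproduces the same aggregate costs as $\mu$: $\fullccosts{u} = \wh{C}(\mu)$ and $\fulldcosts{u} = \wh{D}(\mu)$ (the latter componentwise in $\mbb{R}^K$). Once those two identities are established the result is immediate since, by definition, $\lags{u}{\lambda} = \fullccosts{u} + \dotp{\lambda}{\fulldcosts{u} - \constraintv}$ and $\lagsmix{\mu}{\lambda} = \wh{C}(\mu) + \dotp{\lambda}{\wh{D}(\mu) - \constraintv}$.

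To get the objective-cost identity I would re-run the computation from the proof of Lemma \ref{lem:lsc}: subtract the lower bound $\udl{c}$, apply the Monotone Convergence Theorem (Proposition \ref{prop:mct}) to the nonnegative nondecreasing partial sums, and then use the tower property with respect to $\sigma(\Hst{t}, \At{t})$, obtaining
\begin{align*}
\fullccosts{u} = \sum_{t=1}^{\infty} \sum_{\hst{h}{t} \in \hstspace{t}} \sum_{\at{t} \in \aspace} \alpha^{t-1}\, p\l( u, t, \hst{h}{t}, \at{t} \r)\, \mbb{E}_{P_1}\l[ \cCost \mid \Hst{t} = \hst{h}{t}, \At{t} = \at{t} \r].
\end{align*}
Running the very same steps on $\wh{C}(\mu) = \E{\mu}{P_1}\l[ \sum_{t=1}^{\infty} \alpha^{t-1} \cCost \r]$ yields the identical expression with $p(\mu, t, \hst{h}{t}, \at{t})$ in place of $p(u, t, \hst{h}{t}, \at{t})$. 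The reason the two coincide is that the conditional expectation $\mbb{E}_{P_1}\l[ \cCost \mid \Hst{t} = \hst{h}{t}, \At{t} = \at{t} \r]$ does not depend on the policy (or mixing measure) at all --- it is pinned down by $P_1$ and $\mcl{P}_{tr}$, exactly like the factor $\pr_{P_1}(\ot{t+1} \mid \hst{h}{t}, \at{t})$ used in the proof of Lemma \ref{lem:dominance}. Invoking Lemma \ref{lem:dominance}, which gives $p(\mu, t, \hst{h}{t}, \at{t}) = p(u, t, \hst{h}{t}, \at{t})$ for every $t$, every $\hst{h}{t} \in \hstspace{t}$, and every $\at{t} \in \aspace$, the two series agree term by term, so $\fullccosts{u} = \wh{C}(\mu)$.

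For the constraint costs the argument is the same, with Assumption \ref{assmp:boundedcosts}(c) replacing \ref{assmp:boundedcosts}(a): subtract the componentwise lower bound $\udl{d}$ and apply Proposition \ref{prop:mct} coordinate by coordinate (or, since the partial sums $\sum_{t=1}^{i} \alpha^{t-1} \dCost$ are uniformly bounded under Assumption \ref{assmp:boundedcosts}(c), simply interchange sum and expectation via Tonneli's Theorem, Proposition \ref{prop:tonneli}). This gives $\fulldcosts{u} = \wh{D}(\mu)$, and combining the two identities yields $\lags{u}{\lambda} = \lagsmix{\mu}{\lambda}$.

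The one step I expect to require genuine care --- everything else being routine bookkeeping --- is the policy-independence of $\mbb{E}_{P_1}\l[ \cCost \mid \Hst{t}, \At{t} \r]$ (and its $d$-analogue). The clean way to see it is to write the joint law of $(\Stt{1:t}, \Ot{1:t}, \At{1:t})$ via the adapted Ionescu--Tulcea construction and note that the policy contributes only the factors $\prod_{\tau=1}^{t} \ut{u}{\tau}(\at{\tau} \mid \hst{h}{\tau})$, each of which is constant once $\Hst{t}$ and $\At{t}$ are fixed and therefore cancels between numerator and denominator of the conditional expectation --- precisely the cancellation already carried out in Lemma \ref{lem:dominance} when computing $p(\mu, t, \hst{h}{t}, \at{t})$. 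It is worth noting that the hypothesis $\mu \in \uspacemix$ (a factorized measure) is exactly what makes both Lemma \ref{lem:dominance} applicable and $\wh{C}, \wh{D}$ well defined on $\mu$.
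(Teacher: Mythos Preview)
Your proposal is correct and follows essentially the same approach as the paper's own proof: both rewrite $\wh{C}(\mu)$ and $\wh{D}(\mu)$ via the tower property as expectations of the policy-independent quantity $\mbb{E}_{P_1}[\,\cdot \mid \Hst{t}, \At{t}\,]$, and then invoke Lemma~\ref{lem:dominance} to match the outer distributions on $(\Hst{t}, \At{t})$. Your write-up is considerably more detailed than the paper's (which dispatches the corollary in two displayed lines and ``the result follows''), and in particular your explicit justification of the policy-independence of the inner conditional expectation is a useful addition.
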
	
\begin{proof}	
One notes that $\wh{C}(\mu)$ and $\wh{D}(\mu)$ can be written as:	
\begin{align*}	
\wh{C}(\mu) &= \sum_{t=1}^{\infty} \alpha^{t-1} \E{\mu}{P_1} \l[ \mbb{E}_{P_1} \l[ c\l( \Stt{t}, \At{t} \r)  \r] | \Hst{t}, \At{t} \r],\\	
\wh{D}(\mu) &= \sum_{t=1}^{\infty} \alpha^{t-1} \E{\mu}{P_1} \l[ \mbb{E}_{P_1} \l[ d\l( \Stt{t}, \At{t} \r)  \r] | \Hst{t}, \At{t} \r],	
\end{align*}	
and the result follows.	
\end{proof}

\begin{lem}\label{lem:puth}[Limit Probabilities for a converging sequence of policy-profiles]
Let $\l\{ \useq{i}{u} \r\}_{i=1}^{\infty}$ be a sequence in $\uspace$ that converges to $u$. Then, for any $t \in \mbb{N}$, $ \hst{h}{t} \in \hstspace{t} $, and $\at{t} \in \mcl{A}$,
\begin{align*}
\lim_{i\ra \infty}  \pruphsts{\useq{i}{u}}{t}{\hst{h}{t}, \at{t}} = \pruphsts{u}{t}{\hst{h}{t}, \at{t}},
\end{align*}
where $\pruphsts{\cdot}{t}{\hst{h}{t}, \at{t}} = \prup{\cdot}{P_1} \l( \Hst{t} = \hst{h}{t}, \At{t} = \at{t} \r)$. In other words, for every $t \in \mbb{N}$, the sequence of measures $\l\{ \pruphsts{ \useq{i}{u}}{t}{\cdot, \cdot} \r\}_{i=1}^{\infty}$ converges weakly to $\pruphsts{u}{t}{\cdot, \cdot}$.
\end{lem}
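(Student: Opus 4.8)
The plan is to reduce the claim to the explicit product formula for $\pruphsts{u}{t}{\hst{h}{t},\at{t}}$ obtained inside the proof of Lemma~\ref{lem:dominance}. Writing $\hst{h}{t}=(\ot{1:t},\at{1:t-1})$, that computation yields
\begin{align*}
\pruphsts{u}{t}{\hst{h}{t},\at{t}}
= P_1\l(\sspace,\ot{1}\r)\,\prod_{t'=2}^{t}\pr_{P_1}\l(\ot{t'}\mid\hst{h}{t'-1},\at{t'-1}\r)\,\prod_{n=1}^{N}\prod_{t'=1}^{t}\utn{u}{t'}{n}\l(\atn{t'}{n}\mid\hstn{h}{t'}{0},\hstn{h}{t'}{n}\r).
\end{align*}
The factors $P_1(\sspace,\ot{1})$ and $\pr_{P_1}(\ot{t'}\mid\hst{h}{t'-1},\at{t'-1})$ depend only on $P_1$ and $\mcl{P}_{tr}$, not on the policy-profile (this is implicit in the proof of Lemma~\ref{lem:dominance} and holds because the conditional law of the state given a joint-history does not depend on the policy, hence neither does the one-step conditional law of the next joint-observation). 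Thus the \emph{only} policy-dependent factors on the right are the $t\cdot N$ conditional action-probabilities $\utn{u}{t'}{n}(\atn{t'}{n}\mid\hstn{h}{t'}{0},\hstn{h}{t'}{n})$ with $t'\in[t]$ and $n\in[N]$, the histories and actions in their arguments being read off from $\hst{h}{t}$ and $\at{t}$.

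Next I would invoke the topology placed on $\uspace$ in the proof of Theorem~\ref{thm:strongduality}(c): $\uspace$ carries the topology transported from $\xuspace=\prod_{n=1}^{N}\prod_{h\in\hsnspace{n}}\m{\anspace{n};h}$ through the bijection that identifies $u$ with the point whose coordinate indexed by $\l(n,(\hstn{h}{t'}{0},\hstn{h}{t'}{n})\r)$ equals $\utn{u}{t'}{n}(\cdot\mid\hstn{h}{t'}{0},\hstn{h}{t'}{n})$. For each fixed $t'\in[t]$ and $n\in[N]$, the map $u\mapsto\utn{u}{t'}{n}(\atn{t'}{n}\mid\hstn{h}{t'}{0},\hstn{h}{t'}{n})$ is then the composition of the corresponding coordinate projection (continuous for the product topology) with the point-evaluation functional $\nu\mapsto\nu(\{\atn{t'}{n}\})$ on $\m{\anspace{n}}$, which is continuous because $\anspace{n}$ is a finite discrete space, so that weak convergence there is just coordinate-wise convergence of masses. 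Hence each of these finitely many factors is a continuous function of the policy-profile, and since $\useq{i}{u}\ra u$ in $\uspace$, evaluating each factor along $\useq{i}{u}$ gives a real sequence converging to its value at $u$.

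Finally, a product of finitely many convergent real sequences converges to the product of the limits while the remaining ($i$-independent) factors are constants, so the product formula gives $\pruphsts{\useq{i}{u}}{t}{\hst{h}{t},\at{t}}\ra\pruphsts{u}{t}{\hst{h}{t},\at{t}}$, which is the pointwise assertion. For the ``in other words'' reformulation, set $\mu_i\defeq\pruphsts{\useq{i}{u}}{t}{\cdot,\cdot}$ and $\mu\defeq\pruphsts{u}{t}{\cdot,\cdot}$; these are probability measures on the countable discrete space $\hstspace{t}\times\aspace$ and $\mu_i(\{x\})\ra\mu(\{x\})$ for every $x$ by what was just shown. On such a space, pointwise convergence of the point masses towards those of a probability measure upgrades to setwise (hence weak) convergence, which one checks by a two-sided use of Fatou's lemma (Proposition~\ref{prop:fatou}): for any $E\subseteq\hstspace{t}\times\aspace$ one has $\liminf_i\mu_i(E)\ge\mu(E)$ and $\liminf_i\mu_i(E^c)\ge\mu(E^c)$, and since $\mu_i(E)+\mu_i(E^c)=\mu(E)+\mu(E^c)=1$ for all $i$, both limit inequalities are in fact equalities, giving $\mu_i(E)\ra\mu(E)$. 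The only steps here that are not pure bookkeeping are recognizing that the product formula has just finitely many policy-dependent factors and this last pointwise-to-weak upgrade; I do not expect a genuine obstacle.
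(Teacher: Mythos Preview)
Your proof is correct and follows essentially the same approach as the paper: both arguments rest on the product topology giving coordinate-wise weak convergence, the finiteness of each $\anspace{n}$ upgrading this to pointwise convergence of the action-probabilities, and then the passage of limits through a finite product. The only cosmetic difference is that the paper argues by forward induction on $t$ (recursively writing $\pruphsts{u}{t+1}{\hst{h}{t+1},\at{t+1}}$ in terms of $\pruphsts{u}{t}{\hst{h}{t},\at{t}}$) whereas you invoke the closed-form product formula from the proof of Lemma~\ref{lem:dominance} directly; your explicit Fatou-based justification of the weak-convergence reformulation is a nice addition that the paper leaves implicit.
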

\begin{proof}
Given that $\useq{i}{u}$ converges to $u$, by the definition of convergence in product topology, for every $n \in [N]$, $\useq{i}{\utn{u}{t}{n}} (\hstn{h}{t}{0}, \hstn{h}{t}{n} )$ converges weakly to $ \utn{u}{t}{n} ( \hstn{h}{t}{0}, \hstn{h}{t}{n} ) $. Since $\mcl{A}^n$ is finite, this means that for each $\an{n}\in \anspace{n}$, $\useq{i}{\utn{u}{t}{n}} ( \an{n} | \hstn{h}{t}{0}, \hstn{h}{t}{n} )$ converges to $\utn{u}{t}{n} ( \an{n} | \hstn{h}{t}{0}, \hstn{h}{t}{n} )$, which further implies that for all $a \in \aspace$, $ \useq{i}{\ut{u}{t}} ( a | \hst{h}{t}) $ converges to $ \ut{u}{t} ( a | \hst{h}{t}) $. Now, we use forward induction to prove the statement. 
\begin{enumerate}
\item \textbf{Base Case}:  For time $t=1$, let $\ot{1} \in \hstspace{1} = \ospace$ and $\at{1} \in \mcl{A}$. We have
\begin{align*}
\pruphsts{\useq{i}{u}}{1}{\ot{1}, \at{1}}
=P_1\l( \sspace, o \r) \useq{i}{\ut{u}{1}} \l( \at{1} | \ot{1} \r) 
\ra \pruphsts{u}{1}{\ot{1}, \at{1}}.
\end{align*}

\item \textbf{Induction Step}: Assuming that the statement is true for time $t$, we show that it is true for time $t+1$. Let $\hst{h}{t+1} = \l( \ot{1:t+1}, \at{1:t} \r) = \l( \hst{h}{t}, \at{t}, \ot{t+1} \r) \in \hstspace{t+1}$ and $\at{t+1} \in \aspace$. We have
\begin{align*}
&\pruphsts{\useq{i}{u}}{t+1}{\hst{h}{t+1}, \at{t+1}} \\
&\hspace{0pt} =  
\pruphsts{\useq{i}{u}}{t}{\hst{h}{t}, \at{t}} \useq{i}{\ut{u}{t+1}} \l( \at{t+1} | \hst{h}{t+1} \r) \\
&\hspace{5pt} \times \pr_{P_1} \l( \Ot{t+1} = \ot{t+1} | \Hst{t} = \hst{h}{t}, \At{t} = \at{t} \r).
\end{align*}
By inductive hypothesis, $\pruphsts{\useq{i}{u}}{t}{\hst{h}{t}, \at{t}} $ converges to $\pruphsts{u}{t}{\hst{h}{t}, \at{t}}$, and $ \useq{i}{\ut{u}{t}}\l( \at{t+1} | \hst{h}{t+1}\r) $ converges to $ \ut{u}{t} \l( \at{t+1} | \hst{h}{t+1}\r) $ by assumption. We conclude that $\pruphsts{\useq{i}{u}}{t+1}{\hst{h}{t+1}, \at{t+1}}$ converges to $\pruphsts{u}{t+1}{\hst{h}{t+1}, \at{t+1}}$.
\end{enumerate}
This completes the proof.
\end{proof}

\section{Helpful Facts and Results}\label{sec:appendix:helpful_facts}
\begin{dfn}[Semi-continuity]\label{dfn:lsc}
A function $f : \mcl{X} \mapsto [-\infty, \infty]$ on a topological space $\mcl{X}$ is called \emph{lower semi-continuous} if for every point $x_0 \in \mcl{X}$,
\begin{align*}
\liminf\limits_{x\ra x_0} f(x) \ge f(x_0).
\end{align*}
We call $f$ as an upper semi-continuous function $-f$ is lower semi-continuous.
\end{dfn}

\begin{prop}[Monotone Convergence Theorem]\label{prop:mct}
    Let $\l(X, \mcl{M}, \mu \r)$ be a measure-space. Let $\l\{ f_i \r\}_{i=1}^{\infty}$ be an increasing sequence of measurable functions which are uniformly bounded-from-below. Then, 
    \begin{align*}
        &\int_{X} \lim_{i\ra\infty} f_i(x) \mu(dx) = \lim_{i\ra\infty} \int_{X} f_i(x) \mu(dx). 
    \end{align*}
\end{prop}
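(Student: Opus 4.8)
The plan is to deduce this from the classical (non‑negative) Monotone Convergence Theorem, which we take as a standard fact. Since $\{f_i\}_{i=1}^{\infty}$ is uniformly bounded from below, fix a constant $c_0 \in \mbb{R}$ with $f_i(x) \ge c_0$ for every $i \in \mbb{N}$ and every $x \in X$, and put $g_i \defeq f_i - c_0$. Then each $g_i$ is measurable and non‑negative, $\{g_i\}_{i=1}^{\infty}$ is increasing, and — being monotone — the pointwise limits $f \defeq \lim_{i\to\infty} f_i$ and $g \defeq \lim_{i\to\infty} g_i$ exist in $[-\infty,\infty]$, are measurable, take values in $[c_0,\infty]$ and $[0,\infty]$ respectively, and satisfy $g = f - c_0$.

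Next I would apply the classical Monotone Convergence Theorem to $\{g_i\}$ to obtain
\begin{align*}
\int_X g \, d\mu = \lim_{i\to\infty} \int_X g_i \, d\mu ,
\end{align*}
and then translate this identity back to $\{f_i\}$ by additivity of the integral: $\int_X f_i \, d\mu = \int_X g_i \, d\mu + c_0\,\mu(X)$ and likewise $\int_X f \, d\mu = \int_X g\, d\mu + c_0\,\mu(X)$. Substituting into the displayed equality and cancelling the common term $c_0\,\mu(X)$ yields the claim. In every application of this result in the paper $\mu$ is a probability measure, so $c_0\,\mu(X) = c_0$ is a finite constant and the cancellation is immediate.

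The one place that needs care — and the main (minor) obstacle — is the bookkeeping with infinities: one must check that $\int_X f \, d\mu$ is meaningful, which is exactly where the uniform lower bound is used (it forces $f \ge c_0 > -\infty$ and $f_i^- \le |c_0|$). If one wants the statement for a general, possibly infinite, measure, the ``add and cancel a constant'' step breaks when $c_0 < 0$ and $\mu(X) = \infty$; there one instead argues through the positive parts (increasing, handled by the classical theorem) and the negative parts (decreasing, dominated by $|c_0|$) separately and adds the two limits. Since the finite‑measure case suffices for our purposes, I would simply note this point and not pursue it further.
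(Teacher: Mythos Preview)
The paper does not prove this proposition; it is listed in Appendix~\ref{sec:appendix:helpful_facts} among standard facts (alongside Tonelli, Fatou, Tychonoff, etc.) and simply quoted without argument. Your reduction to the classical non-negative Monotone Convergence Theorem by subtracting the uniform lower bound is the standard way to justify this variant, and your remark that all applications in the paper are to probability measures (so $c_0\,\mu(X)$ is finite and the cancellation is unproblematic) is exactly the right observation.
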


\begin{prop}[Tonneli's Theorem]\label{prop:tonneli}
    Let $f$ be a measurable function on the cartesian product of two $\sigma$-finite measure spaces $(X, \mcl{M}, \mu)$ and $(Y, \mcl{N}, \nu)$ which is bounded from below. Then, 
    \begin{align*}
        &\int_{X\times Y} f(x,y) (\mu \times \nu) (d(x,y))\\ 
        &\hspace{10pt} =\int_X \l( \int_Y f(x,y) \nu(dy)\r) \mu(dx)\\
        &\hspace{10pt} =\int_Y \l(\int_X f(x,y) \mu(dx)\r) \nu(dy).
    \end{align*}
\end{prop}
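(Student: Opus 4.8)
The plan is to reduce the assertion to the classical Tonelli theorem for non-negative measurable functions, which I take as a known fact about products of $\sigma$-finite measure spaces. Let $c_0\in\mbb{R}$ be a constant with $f(x,y)\ge c_0$ for all $(x,y)\in X\times Y$. If $c_0\ge 0$, then $f$ is itself non-negative and $\mcl{M}\otimes\mcl{N}$-measurable, and the three displayed equalities — together with the $\mcl{N}$-measurability of the sections $y\mapsto f(x,y)$, the $\mcl{M}$-measurability of $x\mapsto\int_Y f(x,y)\,\nu(dy)$, and the symmetric statements — are exactly the classical conclusion, with every quantity lying in $[0,\infty]$. So assume $c_0<0$, put $m:=-c_0>0$, and set $g:=f+m$, which is non-negative and $\mcl{M}\otimes\mcl{N}$-measurable on $X\times Y$.

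First I would apply classical Tonelli to $g$: all the relevant sections and iterated-integral maps are measurable, and
\begin{align*}
\int_{X\times Y} g\,d(\mu\times\nu)&=\int_X\l(\int_Y g(x,y)\,\nu(dy)\r)\mu(dx)\\
&=\int_Y\l(\int_X g(x,y)\,\mu(dx)\r)\nu(dy),
\end{align*}
with all three values in $[0,\infty]$; the corresponding measurability statements for $f=g-m$ follow at once since $f$ and $g$ differ by a constant. Next I would transfer each integral back along $f=g-m$, using the elementary identity that for a $\sigma$-finite measure space $(Z,\mcl{L},\rho)$ and a non-negative measurable $h$ one has $\int_Z(h-m)\,d\rho=\int_Z h\,d\rho-m\,\rho(Z)$, with the convention $0\cdot\infty=0$ and with the left side understood as the integral of the function $h-m$, which is bounded below by $-m$. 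Applying this on $(X\times Y,\mu\times\nu)$, then — for fixed $x$ — on $(Y,\nu)$, then — for fixed $y$ — on $(X,\mu)$, and re-integrating the two inner identities in the remaining variable, turns the three equalities for $g$ into the three equalities for $f$; here one uses $(\mu\times\nu)(X\times Y)=\mu(X)\,\nu(Y)$ and the fact that the leftover constant terms $m\,\nu(Y)$ and $m\,\mu(X)$ are constant (hence measurable) in the other variable and integrate to $m\,\mu(X)\,\nu(Y)$.

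The only real obstacle is bookkeeping with $[-\infty,\infty]$-valued arithmetic: one must ensure that ``$\int_Z h\,d\rho-m\,\rho(Z)$'' never degenerates into an indeterminate $\infty-\infty$. This is handled by noting that the negative part $(h-m)^-\le m$ is $\rho$-integrable on every set of finite measure, so along a $\sigma$-finite exhaustion $Z=\bigcup_n Z_n$ with $\rho(Z_n)<\infty$ the identity holds on each $Z_n$ by ordinary finite-measure additivity of the integral, and one passes to the limit by the Monotone Convergence Theorem (Proposition \ref{prop:mct}) applied separately to $(h-m)^+$ and to $(h-m)^-$. In all uses of this proposition elsewhere in the paper the measures involved are probability (hence finite) measures, so $\rho(Z)<\infty$ and this subtlety does not even arise.
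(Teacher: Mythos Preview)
The paper does not prove this proposition; it is listed among the ``Helpful Facts and Results'' as a standard measure-theoretic result and simply cited where needed (in the proof of Lemma~\ref{lem:dominance}, where all the measures involved are probability measures). Your reduction to the classical non-negative Tonelli theorem via the shift $f\mapsto f+m$ is the standard and correct route. The $\infty-\infty$ issue you flag is real, and is in fact a defect of the proposition as stated rather than of your argument: for general $\sigma$-finite (infinite) measures a bounded-below $f$ can have both $\int f^{+}=\infty$ and $\int f^{-}=\infty$ (e.g.\ $f(x,y)=1$ for $x\ge 0$ and $f(x,y)=-1$ for $x<0$ on $\mbb{R}^2$ with Lebesgue measure), so the integrals in the display are not even defined. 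Your exhaustion argument does not repair this case --- the identity holds on each $Z_n$, but the limit degenerates when both parts diverge --- so the proof is complete only under the additional hypothesis that $\int f^{-}\,d(\mu\times\nu)<\infty$ (automatic when the measures are finite). As you correctly note, this is precisely the situation in all of the paper's applications.
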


\begin{prop}[Fatou's Lemma]\label{prop:fatou}
    Let  $(X, \mcl{M}, \mu)$ be a measure-space and let $\{ f_i \}_{i=1}^{\infty}$ be a sequence of measurable functions which are uniformly bounded from below. Then,
    \begin{align*}
        & \liminf_{i\ra\infty} \int f_i(x) \mu (dx)\ge \int \liminf_{i\ra\infty} f_i(x) \mu(dx).
    \end{align*}
\end{prop}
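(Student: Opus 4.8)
The plan is to derive Fatou's Lemma directly from the Monotone Convergence Theorem (Proposition \ref{prop:mct}) via the classical tail-infimum construction, with the uniform lower bound serving precisely to keep every auxiliary sequence admissible for Proposition \ref{prop:mct}.

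First I would fix a constant $m \in \mbb{R}$ with $f_i(x) \ge m$ for all $i$ and all $x \in X$, and for each $k \in \mbb{N}$ define $g_k \defeq \inf_{i \ge k} f_i$. Being the pointwise infimum of a countable family of measurable functions, each $g_k$ is measurable; moreover $g_k \ge m$ for every $k$, the sequence $\{g_k\}_{k=1}^{\infty}$ is non-decreasing, and $\lim_{k\ra\infty} g_k = \liminf_{i\ra\infty} f_i$ pointwise. Thus $\{g_k\}_{k=1}^{\infty}$ is exactly the kind of increasing, uniformly-bounded-from-below sequence to which Proposition \ref{prop:mct} applies.

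Next I would record the elementary domination $g_k \le f_i$ whenever $i \ge k$; integrating gives $\int g_k \, d\mu \le \int f_i \, d\mu$ for every $i \ge k$, hence $\int g_k \, d\mu \le \inf_{i \ge k} \int f_i \, d\mu$. Invoking Proposition \ref{prop:mct} on $\{g_k\}$ and then letting $k\ra\infty$ yields the chain
\begin{align*}
\int \liminf_{i\ra\infty} f_i \, d\mu
= \int \lim_{k\ra\infty} g_k \, d\mu
= \lim_{k\ra\infty} \int g_k \, d\mu
\le \lim_{k\ra\infty} \inf_{i\ge k} \int f_i \, d\mu
= \liminf_{i\ra\infty} \int f_i \, d\mu,
\end{align*}
which is the assertion.

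There is essentially no serious obstacle here; the only point deserving care is that the hypothesis ``uniformly bounded from below'' (rather than merely ``bounded below pointwise for each $i$'') is precisely what guarantees that $\{g_k\}$ and the tail quantities $\inf_{i\ge k}\int f_i\,d\mu$ are legitimate inputs to Proposition \ref{prop:mct}, and that the displayed chain of equalities and inequalities involves no indeterminate $\infty - \infty$.
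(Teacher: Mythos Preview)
Your proof is correct and is the classical derivation of Fatou's Lemma from the Monotone Convergence Theorem. The paper itself does not prove this proposition: it is listed in the appendix of ``Helpful Facts and Results'' as a standard measure-theoretic fact stated without proof, so there is nothing to compare against.
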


\begin{prop}[Tychonoff's Theorem]\label{prop:tychonoff}
Product of countable number of compact spaces is compact under the product topology.
\end{prop}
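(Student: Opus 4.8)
The plan is to use the ultrafilter characterization of compactness: a topological space is compact if and only if every ultrafilter on it converges to at least one point. Together with the ultrafilter lemma (every filter extends to an ultrafilter), this is a standard consequence of the axiom of choice, and it will in fact deliver the conclusion for an arbitrary product, not merely a countable one. So I would first fix $\mathcal{X} = \prod_{n=1}^{\infty} \mathcal{X}_n$ with each $\mathcal{X}_n$ compact, write $\pi_n : \mathcal{X} \to \mathcal{X}_n$ for the canonical projection, and let $\mathcal{F}$ be an arbitrary ultrafilter on $\mathcal{X}$.

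Next, for each $n$ I would form the pushforward family
\[
\pi_n[\mathcal{F}] \;\defeq\; \{\, A \subseteq \mathcal{X}_n : \pi_n^{-1}(A) \in \mathcal{F} \,\},
\]
and check that it is again an ultrafilter, now on $\mathcal{X}_n$; this is a routine verification resting on the facts that $\pi_n^{-1}$ commutes with complementation and with finite intersections. Since $\mathcal{X}_n$ is compact, $\pi_n[\mathcal{F}]$ converges to some point $x_n \in \mathcal{X}_n$. Choosing such an $x_n$ for every $n$ produces a point $x \defeq (x_n)_{n=1}^{\infty} \in \mathcal{X}$, which will be the claimed limit of $\mathcal{F}$.

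It then remains to verify that $\mathcal{F} \to x$ in the product topology, which finishes the argument since $\mathcal{F}$ was arbitrary. A basic open neighborhood of $x$ has the form $U = \bigcap_{n \in J} \pi_n^{-1}(U_n)$ for a finite index set $J \subseteq \mathbb{N}$ and open sets $U_n \ni x_n$. For each $n \in J$, convergence $\pi_n[\mathcal{F}] \to x_n$ gives $U_n \in \pi_n[\mathcal{F}]$, that is, $\pi_n^{-1}(U_n) \in \mathcal{F}$. Since an ultrafilter is in particular a filter and hence closed under finite intersections, $U \in \mathcal{F}$; upward closure of filters then puts every superset of $U$ — in particular every neighborhood of $x$ — into $\mathcal{F}$, so $\mathcal{F}$ converges to $x$.

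The main obstacle here is organizational rather than computational: a self-contained write-up must either prove or cite two auxiliary facts, namely the ultrafilter characterization of compactness and the stability of ultrafilters under pushforward, both of which are classical. An alternative route is the Alexander subbase lemma — it suffices to show that every cover of $\mathcal{X}$ by subbasic open sets $\pi_n^{-1}(V)$ (with $V$ open in $\mathcal{X}_n$) admits a finite subcover, which reduces to compactness of each individual $\mathcal{X}_n$ — but that lemma itself requires a Zorn's-lemma argument of comparable length. Since only countably many factors appear in the applications in this paper, one could instead restrict attention to metrizable $\mathcal{X}_n$ and run a diagonal-subsequence argument establishing sequential compactness; the ultrafilter proof above has the advantage of proving the statement exactly as stated, with no metrizability hypothesis.
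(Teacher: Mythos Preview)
Your proof is correct; the ultrafilter argument is one of the standard routes to Tychonoff's theorem and indeed yields the result for arbitrary (not just countable) products. The paper, however, does not prove this proposition at all: it is listed in the appendix of ``Helpful Facts and Results'' as a classical fact quoted without proof, alongside other standard results such as the Monotone Convergence Theorem, Fatou's Lemma, and Prokhorov's Theorem. So there is no approach to compare against --- you have supplied a complete argument where the paper simply cites the literature. Your closing remark is apt for the paper's actual usage: since every application in the paper involves countable products of compact \emph{metric} spaces (namely $\m{\anspace{n}}$ indexed by countable history sets), the diagonal-subsequence route would have sufficed and would align with Proposition~\ref{prop:metrizability}, which the paper invokes immediately after each appeal to Tychonoff.
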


\begin{prop}[Metrizability of Product Topology on Countable Product of Metric Spaces]\label{prop:metrizability}
Product of countable number of metric spaces, when endowed with the product topology, is metrizable.
\end{prop}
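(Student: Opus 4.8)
The plan is to exhibit an explicit compatible metric. Let $\{(X_i, d_i)\}_{i=1}^\infty$ be the given countable family of metric spaces (a finite family is handled identically, with a finite sum in place of the series below). First I would replace each $d_i$ by the truncated metric $\bar{d}_i \defeq \min\{d_i, 1\}$, which is a metric inducing the same topology on $X_i$; this guarantees boundedness so that the series defining $D$ converges. Then define $D$ on $\prod_{i} X_i$ by
\[
D(x, y) \defeq \sum_{i=1}^{\infty} 2^{-i}\, \bar{d}_i(x_i, y_i).
\]
Since each summand is at most $2^{-i}$, the series converges and $D$ is finite-valued; non-negativity, symmetry, and $D(x,y) = 0 \iff x = y$ are immediate, and the triangle inequality for $D$ follows termwise from that of each $\bar{d}_i$. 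So $D$ is a metric on $\prod_i X_i$.

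Next I would show that the topology $\tau_D$ induced by $D$ coincides with the product topology $\tau_{\mathrm{prod}}$. For the inclusion $\tau_D \subseteq \tau_{\mathrm{prod}}$: given $x$ and $\epsilon > 0$, pick $N$ with $\sum_{i > N} 2^{-i} < \epsilon/2$ and set $\delta = \epsilon/2$; then the basic product-open set $\{ y : \bar{d}_i(x_i, y_i) < \delta \text{ for all } i \le N \}$ is contained in the $D$-ball $B_D(x, \epsilon)$, because the first $N$ coordinates contribute at most $\sum_{i \le N} 2^{-i}\delta < \delta \le \epsilon/2$ while the tail contributes at most $\sum_{i > N} 2^{-i} < \epsilon/2$. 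For the reverse inclusion $\tau_{\mathrm{prod}} \subseteq \tau_D$: a subbasic product-open set has the form $\pi_i^{-1}(U)$ with $U$ open in $X_i$; given $x$ in it, choose $r > 0$ with $B_{\bar{d}_i}(x_i, r) \subseteq U$, and then $B_D(x, 2^{-i} r) \subseteq \pi_i^{-1}(U)$, since $D(x, y) < 2^{-i} r$ forces $\bar{d}_i(x_i, y_i) < r$. Hence every projection is $\tau_D$-continuous and $\tau_D$ contains a neighborhood base for $\tau_{\mathrm{prod}}$, so the two topologies agree and $\prod_i X_i$ is metrizable.

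There is no genuine obstacle here — this is a classical fact, and (unlike Urysohn-type metrization theorems) it requires nothing beyond the explicit metric above. The one point demanding care is the tail-cutting argument in the inclusion $\tau_D \subseteq \tau_{\mathrm{prod}}$: one must use the summability of $\sum_i 2^{-i}$ to approximate a $D$-ball from inside by a cylinder set constraining only finitely many coordinates. This is precisely where countability of the index set is essential, and where the assertion would fail for uncountable products (which are not even first countable when infinitely many factors are nontrivial).
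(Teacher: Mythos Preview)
Your proof is correct and is precisely the classical construction. The paper itself does not prove this proposition at all---it is listed in the appendix of ``Helpful Facts and Results'' as a standard fact without proof---so there is nothing to compare against; your explicit metric $D(x,y)=\sum_i 2^{-i}\bar d_i(x_i,y_i)$ and the two-inclusion verification is exactly the textbook argument one would supply.
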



\begin{prop}[Prokhorov's Theorem]\label{prop:prokhorov}
Let $\l( \mcl{X}, \metric{\mcl{X}} \r)$ be a complete separable metric space with distance metric $\metric{\mcl{X}}$ and let $\borel{\mcl{X}}$ denote the Borel $\sigma$-algebra generated by $\metric{\mcl{X}}$. Let $\m{\mcl{X}}$ be the set of all probability measures on $\borel{\mcl{X}}$ and let $\tau$ denote the topology of weak-convergence on $\m{\mcl{X}}$. Then,  
\begin{enumerate}
    \item The topological space $ \l(\m{\mcl{X}} , \tau\r)$ is completely-metrizable. That is, there exists a complete metric $\metric{\m{\mcl{X}}}$ on $ \m{\mcl{X}}$ that induces the same topology as $ \tau $.
    \item An arbitrary collection $W \subseteq \m{\mcl{X}}$ of probability measures in $ \m{\mcl{X}}$ is tight iff its closure in $\tau $ is compact (i.e., $W$ is precompact in $\tau$).
\end{enumerate}
\end{prop}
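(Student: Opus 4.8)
The statement is the classical Prokhorov theorem, so the plan is to run its standard proof in the present notation, keeping track of where completeness and separability of $\mcl{X}$ are used. I would organize it in four steps: (i) exhibit a concrete metric generating $\tau$; (ii) prove ``tight $\Rightarrow$ relatively compact''; (iii) deduce that the metric is complete; (iv) prove the converse ``relatively compact $\Rightarrow$ tight''.

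\textbf{Step (i).} For closed $A\subseteq\mcl{X}$ put $A^\eps\defeq\{x\in\mcl{X}:\metric{\mcl{X}}(x,A)<\eps\}$, and for $\mu,\nu\in\m{\mcl{X}}$ define the L\'evy--Prokhorov distance
\begin{align*}
\metric{\m{\mcl{X}}}(\mu,\nu)\defeq\inf\{\eps>0:\ \mu(A)\le\nu(A^\eps)+\eps\ \text{ and }\ \nu(A)\le\mu(A^\eps)+\eps\ \text{ for every closed }A\}.
\end{align*}
First I would check this is a metric, and then that it metrizes weak convergence: if $\metric{\m{\mcl{X}}}(\mu_i,\mu)\to0$ then $\mu_i(A)\le\mu(A^{\eps_i})+\eps_i$ with $A^{\eps_i}\downarrow A$, so $\limsup_i\mu_i(A)\le\mu(A)$ for closed $A$ and hence $\mu_i\Rightarrow\mu$; conversely $\mu_i\Rightarrow\mu$ forces $\metric{\m{\mcl{X}}}(\mu_i,\mu)\to0$, and it is here that \emph{separability} of $\mcl{X}$ enters, to reduce the uncountable family of closed test sets to a countable family of finite unions of small open balls.

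\textbf{Steps (ii)--(iii).} For ``tight $\Rightarrow$ relatively compact'': given a sequence $\{\mu_n\}$ in a tight family $W$, for each $k$ choose a compact $K_k$ with $\mu_n(K_k)\ge1-1/k$ for all $n$; since each $C(K_k)$ is separable, a diagonal argument with the Riesz representation theorem and Banach--Alaoglu yields a subsequence whose restrictions to every $K_k$ converge weakly-$*$, and the uniform tightness bounds let these limits be glued into a Borel probability measure $\mu\in\m{\mcl{X}}$ to which the subsequence converges weakly; hence $\overline{W}$ is sequentially compact, and compact by the metrizability from (i). Completeness of $\metric{\m{\mcl{X}}}$ then follows: a Cauchy sequence $\{\mu_i\}$ is tight --- for large indices its terms are $\metric{\m{\mcl{X}}}$-close to one of finitely many measures, each tight by inner regularity on the Polish space $\mcl{X}$, and a fixed enlargement of the common compact (with compact closure, by completeness) absorbs all of them --- hence relatively compact by (ii), and a Cauchy sequence with a convergent subsequence converges.

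\textbf{Step (iv).} ``Relatively compact $\Rightarrow$ tight'' is the direction that genuinely needs $\mcl{X}$ Polish, and I expect it to be the main obstacle. Let $\overline{W}$ be compact and fix $\eps>0$. For each $k$, using separability, cover $\mcl{X}$ by open balls $\{B(x_i,1/k)\}_{i\ge1}$ and set $U_{k,m}\defeq\bigcup_{i\le m}B(x_i,1/k)$, so $U_{k,m}\uparrow\mcl{X}$. The crux is the claim that for each $k$ there is $m_k$ with $\mu(U_{k,m_k})>1-\eps2^{-k}$ for \emph{every} $\mu\in W$. If it failed for some $k$, I would pick $\mu_m\in W$ with $\mu_m(U_{k,m})\le1-\eps2^{-k}$, extract $\mu_{m_j}\Rightarrow\mu^\star$ by compactness, choose $m$ with $\mu^\star(U_{k,m})>1-\eps2^{-k}$ (possible since $\mu^\star(U_{k,m})\uparrow1$), and reach a contradiction from the open-set Portmanteau bound $\liminf_j\mu_{m_j}(U_{k,m})\ge\mu^\star(U_{k,m})$ together with $\mu_{m_j}(U_{k,m})\le\mu_{m_j}(U_{k,m_j})\le1-\eps2^{-k}$ for $m_j\ge m$. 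With the claim, $K\defeq\bigcap_k\overline{U_{k,m_k}}$ is closed and, for each $k$, covered by $m_k$ balls of radius $1/k$, hence totally bounded and therefore compact because $\mcl{X}$ is \emph{complete}; and $\mu(K)\ge1-\sum_k\eps2^{-k}=1-\eps$ for all $\mu\in W$, so $W$ is tight. The only delicate points are keeping the Portmanteau inequalities pointing the right way at weak limits, and recognizing that Step (iv) is the unique place where both completeness (to pass from totally bounded to compact) and separability (to make the covering countable) are indispensable --- no part of the conclusion survives in a general metric space.
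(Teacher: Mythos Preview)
Your outline is a faithful sketch of the standard proof of Prokhorov's theorem and, as far as it goes, is correct: the L\'evy--Prokhorov metric in Step~(i), the diagonal extraction for ``tight $\Rightarrow$ relatively compact'' in Step~(ii), the deduction of completeness in Step~(iii), and the Portmanteau-plus-total-boundedness argument in Step~(iv) are all the right ingredients, and you have correctly flagged where separability and completeness of $\mcl{X}$ each enter.

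However, you should be aware that the paper does \emph{not} prove this proposition at all. It appears in Appendix~\ref{sec:appendix:helpful_facts} (``Helpful Facts and Results'') alongside the Monotone Convergence Theorem, Fatou's Lemma, Tychonoff's Theorem, etc., all of which are simply stated as classical background results and invoked without proof. The paper's ``proof'' is therefore just a citation of the theorem as known. Your proposal is thus not a different route from the paper's --- it is infinitely more detailed than what the paper offers, and would be appropriate for a self-contained treatment but is not expected here.
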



\begin{prop}[Hyperplane Separation Theorem]\label{prop:separation_theorem}
Let $M$ be a non-empty convex subset of 
$\mbb{R}^n$. If $x_0 \in \mbb{R}^n$ does not belong to $M$, there exists $\rho \in \mbb{R}^n$ such that
\begin{align*}
\rho \neq 0 \text { and } \inf_{x \in M} \dotp{p}{x} \geq \dotp{p}{x_0}.
\end{align*}
\end{prop}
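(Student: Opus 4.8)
The plan is to reduce the statement to the classical nearest-point projection theorem for \emph{closed} convex sets, and then recover the general (possibly non-closed) case by a compactness argument. Throughout I use $p$ for the separating vector, matching the displayed inequality.

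First I would replace $M$ by its closure $\ov{M}$, a nonempty closed convex subset of $\mbb{R}^n$; since $M \subseteq \ov{M}$, any $p$ that separates $x_0$ from $\ov{M}$ also separates it from $M$. The first thing to record is that $x_0 \notin M$ forces $x_0 \notin \mathrm{int}(\ov{M})$: this uses the standard fact that for a convex set the interior of the closure equals the interior of the set itself when that interior is nonempty, while a convex set with empty interior lies in a hyperplane, so its closure does too and has empty interior. Consequently $x_0$ lies in the closure of the open set $\mbb{R}^n \setminus \ov{M}$, so I can pick a sequence $x_k \to x_0$ with $x_k \notin \ov{M}$.

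Next, for each $x_k$ I would invoke the projection theorem: $\ov{M}$ being nonempty, closed and convex, there is a nearest point $y_k \in \ov{M}$ to $x_k$ — existence by Weierstrass applied to $\ov{M}$ intersected with a large closed ball, uniqueness by strict convexity of the Euclidean norm — satisfying the variational inequality $\dotp{x_k - y_k}{x - y_k} \le 0$ for all $x \in \ov{M}$, which follows by differentiating the squared distance along the segment from $y_k$ towards $x$ at the endpoint $y_k$. Writing $p_k \defeq y_k - x_k \ne 0$ and rearranging gives $\dotp{p_k}{x} \ge \dotp{p_k}{y_k} > \dotp{p_k}{x_k}$ for every $x \in \ov{M}$, so the unit vectors $\wh{p}_k \defeq p_k / \| p_k \|$ obey $\dotp{\wh{p}_k}{x} \ge \dotp{\wh{p}_k}{x_k}$ on $\ov{M}$. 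Finally, since the $\wh{p}_k$ lie on the compact unit sphere I would extract a convergent subsequence $\wh{p}_k \to p$, note $\| p \| = 1$ so $p \ne 0$, and pass to the limit: for fixed $x \in M$, continuity of the inner product together with $x_k \to x_0$ gives $\dotp{p}{x} \ge \dotp{p}{x_0}$, and taking the infimum over $x \in M$ finishes the proof. If $x_0 \notin \ov{M}$ to begin with, a single projection of $x_0$ itself already yields such a $p$, in fact with a strict gap, so the limiting step is only needed when $x_0 \in \ov{M} \setminus M$.

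I expect the boundary case $x_0 \in \ov{M} \setminus M$ to be the main obstacle: there no single projection works, so one really needs the compactness/limit argument (which is in effect a reproof of the supporting-hyperplane theorem), and the delicate point is justifying the approximating sequence $x_k \notin \ov{M}$, i.e. the topological fact that $\mathrm{int}(\ov{M}) \subseteq M$ for convex $M$. Everything else — existence and uniqueness of projections, the variational inequality, and the final limit — is routine.
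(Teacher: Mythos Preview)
The paper does not prove this proposition; it is listed in Appendix~\ref{sec:appendix:helpful_facts} (``Helpful Facts and Results'') alongside the Monotone Convergence Theorem, Tychonoff's theorem, etc., as a standard result invoked without proof. So there is no paper proof to compare against.

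That said, your argument is correct and is the standard route to the separating/supporting hyperplane theorem in $\mbb{R}^n$: strict separation of $x_k\notin\ov{M}$ via the metric projection onto the closed convex set $\ov{M}$, followed by a compactness pass on the unit sphere to handle the boundary case $x_0\in\ov{M}\setminus M$. The one point worth a small extra sentence is the claim $\mathrm{int}(\ov{M})\subseteq M$: your dichotomy (nonempty interior vs.\ empty interior) is fine, but in the nonempty-interior case you are implicitly using that $\mathrm{int}(\ov{M})=\mathrm{int}(M)$ for convex $M$, which is itself a (standard) lemma; a one-line pointer to that fact would tighten the write-up. You also correctly caught the paper's typo ($\rho$ in the quantifier vs.\ $p$ in the inequality).
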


\begin{prop}[Integral of Bounded-from-Below function with respect to Convex Combination of Non-negative Measures]\label{prop:integral_linearity}
Let $\l(X, \mcl{M}\r)$ be a measure-space. Let $f : X \ra \mbb{R} \cup \{ \infty \}$ be a measurable function that is bounded from below, and let $\mu, \nu$ be two non-negative measures on $\mcl{M}$. Then, for any $\theta \in [0,1]$,
\begin{align*}
    &\int f(x) \l(\theta \mu + (1-\theta) \nu \r)(dx) \\
    &\hspace{10pt} = \theta \int f(x) \mu(dx) + (1-\theta) \int f(x)\nu(dx). 
\end{align*}
    
\end{prop}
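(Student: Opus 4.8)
The plan is to establish the identity by climbing the standard hierarchy of measurable functions, using that $\nu_\theta \defeq \theta\mu + (1-\theta)\nu$ is itself a non-negative measure on $\l(X,\mcl{M}\r)$ and that, by the definition of a convex combination of measures, $\nu_\theta(A) = \theta\,\mu(A) + (1-\theta)\,\nu(A)$ for every $A \in \mcl{M}$. First I would verify the claim for indicator functions $f = \11_A$, where it is just the preceding identity, and then extend it to every non-negative $\mcl{M}$-measurable simple function by the finite additivity and positive homogeneity of the integral (applied under each of the three measures $\mu$, $\nu$, $\nu_\theta$).

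Next I would pass to an arbitrary non-negative $\mcl{M}$-measurable function $g$. Pick an increasing sequence $\l\{ s_i \r\}_{i=1}^{\infty}$ of non-negative simple functions with $s_i \uparrow g$ pointwise (e.g.\ the usual dyadic truncations). Applying the Monotone Convergence Theorem (Proposition \ref{prop:mct}) to $\l\{ s_i \r\}_{i=1}^{\infty}$ separately under $\mu$, $\nu$, and $\nu_\theta$, and invoking the simple-function case at each finite stage, gives
\begin{align*}
\int g \, d\nu_\theta &= \lim_{i\ra\infty} \int s_i \, d\nu_\theta \\
&= \lim_{i\ra\infty} \l( \theta \int s_i \, d\mu + (1-\theta) \int s_i \, d\nu \r) \\
&= \theta \int g \, d\mu + (1-\theta) \int g \, d\nu,
\end{align*}
where both sides are permitted to equal $+\infty$ simultaneously.

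Finally I would treat a general $f$ bounded from below, say $f \ge m$ for some $m \in \mbb{R}$. Writing $f = g + m$ with $g \defeq f - m \ge 0$, the previous step handles $g$, while $\int m \, d\nu_\theta = m\,\nu_\theta(X) = \theta\,m\,\mu(X) + (1-\theta)\,m\,\nu(X)$ is immediate from additivity of $\nu_\theta$; adding the two identities (which is legitimate because the negative part $f^- = \max(-f,0)$ is dominated by the constant $\max(-m,0)$, so no $\infty - \infty$ ambiguity arises) yields the claim for $f$.

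The argument is essentially bookkeeping and there is no genuine obstacle; the only point needing any care is the well-definedness of the extended-real arithmetic on the right-hand side when $\mu$ or $\nu$ is an infinite measure. This is exactly where the hypothesis that $f$ is bounded from below is used — it forces $f^-$ to be bounded — and in every application of this proposition in the paper the relevant measures are probability measures, so the subtlety does not even surface.
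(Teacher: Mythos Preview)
The paper states this proposition without proof --- it appears in Appendix~\ref{sec:appendix:helpful_facts} as a background fact with no accompanying argument. Your proof is the standard measure-theoretic ladder (indicators $\ra$ simple functions $\ra$ non-negative measurable via Proposition~\ref{prop:mct} $\ra$ bounded-from-below via an additive shift) and is correct.

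One small comment on your final paragraph: you write that boundedness from below ``forces $f^-$ to be bounded,'' which is true, but a bounded $f^-$ can still have $\int f^- \, d\mu = \infty$ when $\mu(X) = \infty$, in which case $\int f \, d\mu$ is not even well-defined and the proposition as stated breaks down. You essentially acknowledge this by noting that the paper only ever invokes the result with probability measures (in the proof of Theorem~\ref{thm:strongduality}(c), to deduce convexity of $\wh{L}$ in $\mu \in \uspacemix$), so the issue is moot for the paper's purposes; but strictly speaking the proposition needs an additional finiteness hypothesis (e.g.\ $\mu,\nu$ finite, or $\int f^- \, d\mu, \int f^- \, d\nu < \infty$) to be literally true as written.
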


\begin{prop}[Behavior of Integrals of a Bounded-from-Below and Lower Semi-Continuous Function]\label{prop:lsc}
Let $(\mcl{X}, \metric{\mcl{X}})$ be a complete separable metric space with distance metric $\metric{\mcl{X}}$ and let $\borel{\mcl{X}}$ denote the Borel $\sigma$-algebra generated by $\metric{\mcl{X}}$. Let $\l( \m{\mcl{X}} , \metric{\m{\mcl{X}}} \r)$ be the complete metric space of all probability measures on $\borel{\mcl{X}}$ with the topology of weak-convergence.\footnote{Prokhorov's theorem (see Proposition \ref{prop:prokhorov}) ensures completeness and metrizability of $\m{\mcl{X}}$.} Let $\mu \in \m{\mcl{X}}$ and let $f : \mcl{X} \ra \mbb{R} \cup \l\{ \infty\r\} $ be a function that is lower semi-continuous $\mu$-amost-everywhere\footnote{Lower semi-continuity of $f$ ensures that it is measurable.} and is bounded from below. Then, the function
\begin{align*}
H : \m{\mcl{X}} \mapsto \mbb{R} \cup \l\{ \infty \r\},\  
H(\mu') \defeq \int f(x) \mu'(dx)
\end{align*}
is lower semi-continuous at $\mu$. In particular, if $f$ is point-wise lower semi-continuous, then $H$ is also point-wise lower semi-continuous (on $\m{\mcl{X}}$).
\end{prop}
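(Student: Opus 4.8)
The plan is to reduce the statement to the defining property of weak convergence applied to bounded continuous functions, using a truncated inf-convolution approximation of $f$ from below and the Monotone Convergence Theorem to pass to the limit. Since $\m{\mcl{X}}$ is metrizable by Proposition \ref{prop:prokhorov}, lower semi-continuity of $H$ at $\mu$ is equivalent to its sequential version, so it suffices to fix a sequence $\{\mu_i\}_{i=1}^{\infty}$ in $\m{\mcl{X}}$ converging weakly to $\mu$ and to prove $\liminf_{i\to\infty} H(\mu_i) \ge H(\mu)$.

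First I would normalize the integrand: replacing $f$ by $f-\udl{c}$, where $\udl{c}$ is a lower bound, and using that every $\mu'\in\m{\mcl{X}}$ is a \emph{probability} measure (so $\int(f-\udl{c})\,d\mu' = H(\mu')-\udl{c}$), we may assume $f\ge 0$. Fix a $\mu$-null set $N$ off which $f$ is lower semi-continuous. For $k\in\mbb{N}$ define the inf-convolution $f_k(x)\defeq \inf_{y\in\mcl{X}}\bigl(f(y)+k\,\metric{\mcl{X}}(x,y)\bigr)$. The standard facts I would record and (briefly) verify are: each $f_k$ is nonnegative and $k$-Lipschitz, hence continuous; $f_k\le f_{k+1}\le f$ everywhere; and $f_k(x)\uparrow f(x)$ for every $x\notin N$ — this last point uses lower semi-continuity genuinely, since near-minimizers $y_k$ in the definition of $f_k(x)$ satisfy $k\,\metric{\mcl{X}}(x,y_k)\le f_k(x)+1/k$ and so must converge to $x$ (as $f\ge 0$), whence lower semi-continuity at $x$ forces $\lim_k f_k(x)=f(x)$, with the value $f(x)=\infty$ handled the same way. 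For $M\in\mbb{N}$ set $f_k^M\defeq f_k\wedge M$, a bounded continuous function with $0\le f_k^M\le f_k\le f$.

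Next I would apply the definition of weak convergence to each bounded continuous $f_k^M$: $\int f_k^M\,d\mu_i \to \int f_k^M\,d\mu$ as $i\to\infty$. Since $f_k^M\le f$, this yields $\liminf_{i}H(\mu_i)=\liminf_i\int f\,d\mu_i \ge \int f_k^M\,d\mu$ for all $k,M$. The sequence $\{f_n^n\}_{n=1}^{\infty}$ is nondecreasing (because $f_n\le f_{n+1}$ and $n\le n+1$) and converges to $f$ on $\mcl{X}\setminus N$, hence $\mu$-almost-everywhere; by the Monotone Convergence Theorem (Proposition \ref{prop:mct}), $\int f_n^n\,d\mu\uparrow \int f\,d\mu=H(\mu)$. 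Letting $k=M=n\to\infty$ in the inequality above gives $\liminf_i H(\mu_i)\ge H(\mu)$, which is lower semi-continuity of $H$ at $\mu$. The final assertion is then immediate: if $f$ is lower semi-continuous at every point one may take $N=\emptyset$, and the argument above applies verbatim at an arbitrary $\mu\in\m{\mcl{X}}$, so $H$ is lower semi-continuous everywhere.

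The main obstacle — really the only step requiring care — is the approximation: producing a nondecreasing sequence of \emph{bounded continuous} functions that increases $\mu$-almost-everywhere to a possibly $+\infty$-valued, only $\mu$-almost-everywhere lower semi-continuous $f$, and checking both the pointwise convergence $f_k\uparrow f$ off $N$ (the single place lower semi-continuity is used) and the bound $f_k^M\le f$ that lets the weak-convergence limit pass through a $\liminf$. Everything else is a routine assembly of the definition of weak convergence, monotone convergence, and the probability-measure normalization.
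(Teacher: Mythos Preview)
Your proof is correct and follows essentially the same route as the paper: approximate $f$ from below by the inf-convolutions $f_k(x)=\inf_y\{f(y)+k\,\metric{\mcl{X}}(x,y)\}$ (this is exactly the construction the paper uses in Proposition~\ref{prop:lsc3}), then pass to the limit with the Monotone Convergence Theorem; the paper organizes things slightly differently by first replacing $f$ with its lower semi-continuous envelope $f'$ and then invoking Proposition~\ref{prop:lsc3} abstractly, whereas you work directly with $f$ and show $f_k\uparrow f$ only $\mu$-a.e. Your explicit truncation $f_k^M=f_k\wedge M$ to obtain \emph{bounded} continuous test functions is a worthwhile addition, since weak convergence only guarantees $\int g\,d\mu_i\to\int g\,d\mu$ for bounded continuous $g$, and the paper's argument is a bit elliptical on this point.
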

\begin{proof}
Define $f' : \mcl{X} \ra \mbb{R} \cup \{\infty \}$ as $f'(x) \defeq f(x) \wedge \liminf_{y\ra x} f(y)$. Then, $f'$ minorizes $f$\footnote{That is, $f'(x) \le f(x)$.}, is lower semi-continuous, and coincides with $f$ at $x$ if and only if $f$ is lower semi-continuous at $x$. Also, $f'$ is bounded from below (since $f$ is). By Proposition \ref{prop:lsc3}, $f'$ can be written as the point-wise limit of increasing sequence of uniformly bounded-from-below continuous functions from $\mcl{X}$ into $\mbb{R} \cup \{ \infty \}$, say $\l\{ g_i 
\r\}_{i=1}^{\infty} $, i.e., $f'(x) = \lim_{i\ra \infty} g_i(x)$. Then, for every $\mu' \in \m{\mcl{X}}$,
\begin{align*}
\int f'(x)\mu'(dx) = \int \lim_{i\ra \infty} g_i(x) \mu'(dx) = \lim_{i\ra \infty} \int g_i(x)\mu'(dx),
\end{align*}
where the last equality follows from the Montone Convergence Theorem (see Proposition \ref{prop:mct}). The above equality shows that the function $H' : \m{\mcl{X}} \ra \mbb{R} \cup \{ \infty\}$ such that $H'(\mu') = \int f'(x) \mu'(dx)$, is the point-wise limit of an increasing sequence of uniformly bounded-from-below continuous functions. Therefore, by Proposition \ref{prop:lsc3}, $H'$ is lower semi-continuous. Now, if $f$ is lower semi-continuous $\mu$-almost-everywhere, then $f = f'$ $\mu-$almost-everywhere. This gives,
\begin{align*}
H(\mu) &= \int f(x) \mu(dx) \\
&= \int f'(x) \mu(dx) \\
&\labelrel{=}{eqr:lsc:H2islsc} \liminf_{\mu'\ra\mu} H'(\mu') \\
&\labelrel{\le}{eqr:lsc:H2minorizesH} \liminf_{\mu'\ra\mu} H(\mu'),
\end{align*}
Here, \eqref{eqr:lsc:H2islsc} uses lower semi-continuity of $H'$ and \eqref{eqr:lsc:H2minorizesH} follows from the fact that $H'$ minorizes $H$ (since $f'$ minorizes $f$). The inequality $H(\mu) \le \liminf_{\mu'\ra\mu} H(\mu')$ is the definition of lower semi-continuity at $\mu$. 
\end{proof}

\begin{prop}[Equivalent Characterization of a Bounded-from-Below Lower Semi-Continuous Function]\label{prop:lsc3}
Let $\l( \mcl{X}, \metric{\mcl{X}} \r)$ be a metric space. Then, a function $f : \mcl{X} \ra \mbb{R} \cup \{ \infty \}$ is a bounded-from-below lower semi-continuous function if and only if it can be written as the point-wise limit of an increasing sequence of uniformly bounded-from-below continuous functions from $\mcl{X}$ into $\mbb{R} \cup \{ \infty \}$. 
\end{prop}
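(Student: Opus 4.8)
The plan is to prove the two implications separately, with the forward direction resting on the classical \emph{Lipschitz (Moreau--Yosida) regularization}.

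For the easy direction ($\Leftarrow$), I would assume $f = \lim_i g_i$ pointwise, where $\{g_i\}_{i=1}^\infty$ is an increasing sequence of continuous functions with $g_i \ge m$ for all $i$ and some fixed $m\in\mbb{R}$. Then $f \ge m$, so $f$ is bounded from below, and since the sequence increases, $f = \sup_i g_i$; in particular $g_i \le f$ for every $i$. To check lower semi-continuity at a point $x_0$: given any real $M < f(x_0)$ (such $M$ exists since $f(x_0)\ge m > -\infty$), first choose $i$ with $g_i(x_0) > M$ using $g_i(x_0)\uparrow f(x_0)$, then use continuity of $g_i$ to get a neighborhood $U$ of $x_0$ on which $g_i > M$; since $f \ge g_i$ this forces $f > M$ on $U$, hence $\liminf_{x\to x_0} f(x) \ge M$. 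Letting $M\uparrow f(x_0)$ gives $\liminf_{x\to x_0} f(x) \ge f(x_0)$.

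For the main direction ($\Rightarrow$), let $f$ be lower semi-continuous with $f\ge m$. If $f\equiv\infty$, I would simply take $g_i \equiv m+i$ and be done; otherwise fix $y_0$ with $f(y_0)<\infty$ and define, for $i\in\mbb{N}$,
\[
  g_i(x) \;\defeq\; \inf_{y\in\mcl{X}}\big( f(y) + i\,\metric{\mcl{X}}(x,y) \big).
\]
I would then verify, in this order: (i) $g_i \ge m$ since each term is $\ge m$, so the sequence is uniformly bounded below; (ii) $g_i(x) \le f(y_0) + i\,\metric{\mcl{X}}(x,y_0) < \infty$, so $g_i$ is real-valued, and $g_i \le f$ by taking $y=x$; (iii) $g_i$ is increasing in $i$; (iv) $g_i$ is $i$-Lipschitz, hence continuous, via $f(y) + i\,\metric{\mcl{X}}(x,y) \le f(y) + i\,\metric{\mcl{X}}(x',y) + i\,\metric{\mcl{X}}(x,x')$, taking the infimum over $y$ and then swapping $x,x'$; and (v) $g_i(x)\to f(x)$ for every $x$. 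For (v), monotonicity and $g_i\le f$ give $L(x):=\lim_i g_i(x) \le f(x)$; if this were strict, pick $c$ with $L(x)<c<f(x)$ and, for each $i$, some $y_i$ with $f(y_i) + i\,\metric{\mcl{X}}(x,y_i) < c$. Then $\metric{\mcl{X}}(x,y_i) < (c-m)/i \to 0$ and $f(y_i)<c$ for all $i$, so $y_i\to x$ with $\liminf_i f(y_i)\le c<f(x)$, contradicting lower semi-continuity of $f$ at $x$.

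I expect the only real obstacle to be step (v), which is the one place where lower semi-continuity is genuinely used, together with the minor bookkeeping needed to dispatch the cases $f(x)=\infty$ (in both directions) and $f\equiv\infty$. Everything else is routine: the regularization $g_i(x)=\inf_y\big(f(y)+i\,\metric{\mcl{X}}(x,y)\big)$ conveniently supplies monotonicity, Lipschitz continuity, the uniform lower bound, and real-valuedness in one stroke, so once it is written down the argument is essentially mechanical.
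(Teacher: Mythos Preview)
Your proposal is correct and follows essentially the same approach as the paper: both directions match, with the forward direction built on the Lipschitz (Moreau--Yosida) regularization $g_i(x)=\inf_{y}\big(f(y)+i\,\metric{\mcl{X}}(x,y)\big)$ and the reverse direction on the standard ``sup of continuous functions is lower semi-continuous'' argument. If anything, your treatment is slightly more careful than the paper's about edge cases (the $f\equiv\infty$ case and the real-valuedness of $g_i$), and your contradiction argument for pointwise convergence is a harmless variant of the paper's direct $\eps$--$\delta$ verification.
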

\begin{proof}
\textbf{Necessity}: Define $f_n : \mcl{X} \ra \mbb{R} \cup \{ \infty \}$ as follows:
\begin{align*}
f_n\l( x \r) &\defeq \inf_{y\in\mcl{X}} \l\{ f(y) + n \metric{\mcl{X}} \l(x, y\r) \r\}.
\end{align*}
\begin{enumerate}
\item \textit{Increasing}: 
\begin{align*}
f_{n+1}\l( x \r) = \inf_{y\in\mcl{X}} \l\{ f(y) + (n+1)\metric{\mcl{X}}\l( x,y\r) \r\} \ge f_n(x).
\end{align*}
\item \textit{Uniformly Bounded-from-Below}: Since $f_n\l(x \r) \ge \inf_{y\in\mcl{X}} \l\{ f(y) \r\}$ and $f$ is bounded-from-below, the functions $\l\{ f_n\r\}_{n=1}^{\infty}$ are uniformly bounded-from-below.
\item \textit{Continuity}: By triangle-inequality,
\begin{align*}
f(y) + n\metric{\mcl{X}}\l(y, z\r) \le
f(y) + n\metric{\mcl{X}}\l(y, w\r) +  n\metric{\mcl{X}}\l(w, z\r),
\end{align*}
and therefore, taking the infimum over $y$ on both sides gives $ f_n\l( z \r) - f_n\l( w \r) \le n\metric{\mcl{X}}\l( w, z\r) $. Similarly, we can get $ f_n\l( w \r) - f_n\l( z \r) \le n\metric{\mcl{X}}\l( w, z\r) $, and so
\begin{align*}
|f_n\l( z \r) - f_n\l( w \r)| \le n \metric{\mcl{X}} \l(w, z\r).
\end{align*}
The above relation shows that $f_n$ is Lipschitz and thus continuous.
\item \textit{Point-wise Convergence to $f$}: Fix $x_0 \in \mcl{X}$ and $\eps>0$. We would like to show that there exists a positive integer $n' = n'(x_0, \eps)$ such that, for all $ n \ge n'$, $| f_n\l(x_0\r) - f\l(x_0\r) | < \eps$. Since $f$ is lower semi-continuous at $x_0$, there exists $\delta = \delta(x_0, \eps) > 0$ such that
\begin{align*}
\metric{\mcl{X}}\l( x_0, y\r) < \delta \implies f(y) >  f(x_0) -\eps.\numberthis\label{eq:lsc2:implication}
\end{align*}
Since $f$ is bounded-from-below (and $\delta>0$), there exists a positive integer $n'=n'(\delta(x_0,\eps))$ such that
\begin{align*}
&\metric{\mcl{X}}\l( x_0, y\r) \ge \delta\\
&\hspace{0pt} \implies \forall\  n\ge n', f(y) + n\metric{\mcl{X}}\l( x_0, y\r) > f(x_0)\\
&\hspace{0pt} \implies \forall\  n\ge n', \inf_{\metric{\mcl{X}}\l( x_0, y\r) \ge \delta } \l\{ f(y) + \metric{\mcl{X}}(x_0, y) \r\}\ge f\l(x_0\r).
\end{align*}
So, for all $n\ge n'$, we have
\begin{align*}
f(x_0) \ge f_n\l( x_0 \r) &= \inf_{\metric{\mcl{X}}\l( x_0, y\r) \le \delta } \l\{ f(y) + n\metric{\mcl{X}}(x_0, y) \r\}\\
&\ge\inf_{\metric{\mcl{X}}\l( x_0, y\r) \le \delta } \l\{ f(y) \r\}\\
&\labelrel{>}{eqr:lsc2:1}\inf_{\metric{\mcl{X}}\l( x_0, y\r) \le \delta } \l\{ f(x_0) - \eps \r\}\\
&=f(x_0) - \eps.
\end{align*}
where \eqref{eqr:lsc2:1} uses \eqref{eq:lsc2:implication}.
\end{enumerate}
\hspace{5pt} \textbf{Sufficiency}: Let $\l\{ f_n \r\}_{n=1}^{\infty} $ be an increasing sequence of uniformly bounded-from-below continuous functions from $\mcl{X}$ into $\mbb{R} \cup \l\{ \infty \r\}$. Since the sequence is monotonic, it has a point-wise-limit $f : \mcl{X} \ra \mbb{R} \cup \l\{ \infty \r\}$ which is bounded-from-below because all the functions in the sequence are uniformly bounded-from-below. We need to show that $f$ is lower semi-continuous. 

Fix $x_0 \in \mcl{X}$ and $\eps>0$. We would like to show that there exists $\delta = \delta(x_0,\eps)>0$ such that $\metric{\mcl{X}}\l( x_0, y\r) < \delta \implies f(y) >  f(x_0) -\eps $. Since  $\l\{ f_n \r\}_{n=1}^{\infty} $ is increasing (and converges point-wise to $f$), there exists a positive integer $n'=n'(x_0, \eps)$ such that, for all $n\ge n'$, $f(x_0) \ge f_n(x_0) \ge f(x_0) - \frac{\eps}{2}$. Since $f_{n'}$ is lower semi-continuous, there exists $\delta=\delta(n'(x_0, \eps)) > 0$ such that $\metric{\mcl{X}}\l( x_0, y\r)<\delta \implies f(y) \ge f_{n'}(y) > f_{n'}(x_0) - \frac{\eps}{2} \ge f(x_0) - \eps$. 
\end{proof}

\begin{prop}[Banach Fixed-Point Theorem]\label{prop:banach}
    Let $\l(\mcl{X}, \metric{\mcl{X}}\r)$ be a (non-empty) complete metric space with a contraction mapping $T: \mcl{X} \ra \mcl{X}$. Then $T$ admits a unique fixed-point $x^\star$ in $\mcl{X}$ (i.e. $T\l(x^\star\r)=x^\star$ ). Furthermore, $x^\star$ can be found as follows: start with an arbitrary element $x_0 \in \mcl{X}$ and define a sequence $\l(x_i\r)_{i \in \mbb{N}}$ by $x_i = T\l(x_{i-1}\r)$ for $i \in \mbb{N}$. Then, $\lim _{i \ra \infty} x_i=x^\star$.
\end{prop}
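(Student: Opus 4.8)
The plan is to prove existence via Picard iteration and then dispatch uniqueness by a short contraction argument. Let $q \in [0,1)$ denote the contraction constant, so that $\metric{\mcl{X}}\l(T(x), T(y)\r) \le q\, \metric{\mcl{X}}\l(x,y\r)$ for all $x, y \in \mcl{X}$. First I would fix an arbitrary $x_0 \in \mcl{X}$, form the sequence $x_i = T(x_{i-1})$, and control consecutive increments: by induction on $i$ together with the contraction property, $\metric{\mcl{X}}\l(x_{i+1}, x_i\r) \le q^i\, \metric{\mcl{X}}\l(x_1, x_0\r)$.

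The central step is to show that $\l(x_i\r)_{i}$ is Cauchy. For $m > n$, the triangle inequality and the increment bound give $\metric{\mcl{X}}\l(x_m, x_n\r) \le \sum_{k=n}^{m-1} \metric{\mcl{X}}\l(x_{k+1}, x_k\r) \le \metric{\mcl{X}}\l(x_1, x_0\r) \sum_{k=n}^{m-1} q^k \le \metric{\mcl{X}}\l(x_1, x_0\r) \frac{q^n}{1-q}$. Since $q < 1$, the right-hand side tends to $0$ as $n \to \infty$, with a bound that is independent of $m$; hence the sequence is Cauchy. Completeness of $\mcl{X}$ then furnishes a limit $x^\star \in \mcl{X}$ with $\lim_{i\to\infty} x_i = x^\star$, which is exactly the convergence asserted in the statement.

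Next I would verify that $x^\star$ is a fixed point. A contraction is Lipschitz, hence continuous, so $T(x^\star) = T\l(\lim_i x_i\r) = \lim_i T(x_i) = \lim_i x_{i+1} = x^\star$; alternatively, one bounds $\metric{\mcl{X}}\l(x^\star, T(x^\star)\r)$ above by $\metric{\mcl{X}}\l(x^\star, x_{i+1}\r) + \metric{\mcl{X}}\l(T(x_i), T(x^\star)\r)$ and lets $i \to \infty$. Finally, uniqueness follows immediately: if $T(y^\star) = y^\star$ as well, then $\metric{\mcl{X}}\l(x^\star, y^\star\r) = \metric{\mcl{X}}\l(T(x^\star), T(y^\star)\r) \le q\, \metric{\mcl{X}}\l(x^\star, y^\star\r)$, and since $q < 1$ this forces $\metric{\mcl{X}}\l(x^\star, y^\star\r) = 0$, i.e. $x^\star = y^\star$.

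The only genuinely delicate point is the Cauchy estimate, specifically recognizing that the geometric tail $q^n/(1-q)$ yields a bound uniform in $m$, which is precisely what licenses the invocation of completeness; everything else (the inductive increment bound, continuity of $T$, and the one-line uniqueness argument) is routine.
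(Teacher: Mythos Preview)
Your proof is correct and is the standard Picard iteration argument. The paper itself does not supply a proof of this proposition: it is listed in Appendix~\ref{sec:appendix:helpful_facts} (``Helpful Facts and Results'') alongside other classical results (Monotone Convergence, Fatou, Tychonoff, Prokhorov, etc.) that are stated without proof and used as black boxes. So there is nothing to compare against; your argument is exactly the textbook proof one would expect, and all steps---the inductive increment bound, the geometric-tail Cauchy estimate, continuity of $T$ for the fixed-point verification, and the one-line uniqueness---are handled correctly.
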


\section{A Minimax Theorem for Functions with Positive Infinity
}\label{sec:appendix:minimax}

\begin{prop}[A Minimax Theorem For Functions with Positive Infinity]\label{prop:sionminimax}
Let $\mcl{X}$ and $\mcl{Y}$ be convex topological spaces where $\mcl{X}$ is also compact. Consider a function $f : \mcl{X} \times \mcl{Y} \ra \mbb{R} \cup \{ \infty \} $ such that
\begin{enumerate}
\item for each $y \in \mcl{Y}$, $f\l(\cdot, y \r)$ is convex and lower semi-continuous.
\item for each $x \in \mcl{X}$, $f\l(x, \cdot \r)$ is concave.
\item If $f (x, y) = \infty$, then $f(x, y') = \infty$ for all $y'\in\mcl{Y}$.
\end{enumerate}
Then, there exists $x^\star \in \mcl{X}$ such that
\begin{align*}
\sup_{y\in \mcl{Y}} f\l( x^\star, y \r) &=
\inf_{x \in \mcl{X}} \sup_{y \in \mcl{Y}} f\l( x, y \r)\\
&=\sup_{y \in \mcl{Y}} \inf_{x \in \mcl{X}} f(x, y).
\end{align*}
\end{prop}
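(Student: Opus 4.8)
The plan is to reduce the minimax equality to a finite-dimensional separation argument in the spirit of the classical von Neumann/Sion minimax theorem, handling the $\mbb{R}\cup\{\infty\}$-valued setting via hypothesis (3). First I would settle the part about $x^\star$, which needs nothing beyond compactness and lower semi-continuity: the function $g(x):=\sup_{y\in\mcl{Y}}f(x,y)$ is a supremum of convex lower semi-continuous functions, hence convex and lower semi-continuous on $\mcl{X}$ with values in $\mbb{R}\cup\{\infty\}$, so it attains its infimum on the compact set $\mcl{X}$ at some $x^\star$, giving $\sup_{y}f(x^\star,y)=\inf_{x}\sup_{y}f(x,y)$. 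Since $\sup_{y}\inf_{x}f\le\inf_{x}\sup_{y}f$ is automatic, it remains only to prove $\inf_{x}\sup_{y}f(x,y)\le\sup_{y}\inf_{x}f(x,y)=:v$.

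For this I would argue by contradiction. Assume $\inf_{x}\sup_{y}f>v$ (if $v=\infty$ the two sides already coincide, so necessarily $v<\infty$) and fix a real number $c$ with $v<c<\inf_{x}\sup_{y}f(x,y)$. Then for every $x\in\mcl{X}$ there is $y$ with $f(x,y)>c$, so the sets $U_{y}:=\{x:f(x,y)>c\}$, which are open by lower semi-continuity of $f(\cdot,y)$, form an open cover of $\mcl{X}$; by compactness extract a finite subcover $U_{y_{1}},\dots,U_{y_{n}}$. Writing $g_{i}:=f(\cdot,y_{i})$, we get $\max_{i}g_{i}(x)>c$ for all $x$, hence $m:=\inf_{x}\max_{i}g_{i}(x)\ge c>v$; moreover $m<\infty$, since $m=\infty$ would force, via hypothesis (3), that $f(x,\cdot)\equiv\infty$ for every $x$ and hence $v=\infty$.

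The core step is a separation in $\mbb{R}^{n}$. Consider $A:=\{t\in\mbb{R}^{n}:\exists\,x\in\mcl{X},\ g_{i}(x)\le t_{i}\ \forall i\}$; by convexity of the $g_{i}$ this is a non-empty convex upward-closed subset of $\mbb{R}^{n}$, and for every $\eps>0$ the point $z_{\eps}:=(m-\eps)\mathbf{1}$ lies outside $A$ (else some $x$ would have $\max_{i}g_{i}(x)\le m-\eps<m$). By the Hyperplane Separation Theorem (Proposition \ref{prop:separation_theorem}) there is $\rho\in\mbb{R}^{n}\setminus\{0\}$ with $\langle\rho,t\rangle\ge\langle\rho,z_{\eps}\rangle$ for all $t\in A$; upward-closedness of $A$ forces $\rho\ge0$, so $\lambda:=\rho/\|\rho\|_{1}\in\Delta_{n}$. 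Taking $t=(g_{i}(x))_{i}\in A$ for $x$ with all $g_{i}(x)$ finite, and noting the inequality is trivial for $x$ with $g_{i}(x)=\infty$ (by hypothesis (3) this occurs for all $i$ simultaneously), gives $\sum_{i}\lambda_{i}f(x,y_{i})\ge m-\eps$ for every $x\in\mcl{X}$, i.e.\ $\inf_{x}\sum_{i}\lambda_{i}f(x,y_{i})\ge m-\eps$. On the other hand, concavity of $f(x,\cdot)$ and convexity of $\mcl{Y}$ give $\sum_{i}\lambda_{i}f(x,y_{i})\le f(x,\sum_{i}\lambda_{i}y_{i})$ pointwise in $x$ (with the convention $\infty\le\infty$), hence $\inf_{x}\sum_{i}\lambda_{i}f(x,y_{i})\le\inf_{x}f(x,\sum_{i}\lambda_{i}y_{i})\le v$ since $\sum_{i}\lambda_{i}y_{i}\in\mcl{Y}$. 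Combining, $m-\eps\le v$ for all $\eps>0$, so $m\le v$, contradicting $m>v$. This proves $\inf_{x}\sup_{y}f=\sup_{y}\inf_{x}f$, which with the first paragraph gives the full chain of equalities.

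The main obstacle, and the only genuinely delicate point, is threading the $+\infty$ values through this argument: hypothesis (3) is exactly what guarantees that $\{x:f(x,y)=\infty\}$ is independent of $y$, and this is used (i) to keep $A$ a subset of $\mbb{R}^{n}$ with the stated properties, (ii) to make the Jensen step $\sum_{i}\lambda_{i}f(x,y_{i})\le f(x,\sum_{i}\lambda_{i}y_{i})$ valid even on that set, and (iii) to ensure $\sum_{i}\lambda_{i}f(x,y_{i})=\infty\ge m-\eps$ for such $x$. A secondary subtlety, already built into the plan, is that one should separate $z_{\eps}$ from the epigraph-type set $A$ rather than first restricting $f$ to a sublevel set where it is real-valued: doing it this way keeps the resulting bound an infimum over all of $\mcl{X}$, which is what the concavity step then consumes. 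Checking that $A$ is convex and upward-closed, and that $\rho$ may be taken nonnegative, is routine.
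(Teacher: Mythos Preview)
Your argument is correct and rests on the same three ingredients as the paper's proof: compactness to pass to finitely many $y_1,\dots,y_n$, a hyperplane-separation argument in $\mbb{R}^n$ exploiting convexity of each $f(\cdot,y_i)$ (your set $A$ is exactly the paper's $\phi_J(\mcl{X}_J)+\mbb{R}_{\ge0}^n$), and concavity of $f(x,\cdot)$ to pass from the simplex combination $\sum_i\lambda_i f(\cdot,y_i)$ back to a single element of $\mcl{Y}$, with hypothesis~(3) controlling the $+\infty$ values at each stage. The difference is purely organizational: the paper, following Aubin, introduces the intermediate quantity $v^{\natural}=\sup_{J\text{ finite}}\inf_{x}\max_{y\in J}f(x,y)$ and proves $v^{\natural}=v^{\sharp}$ (via the finite-intersection property of the closed sublevel sets $S_y=\{f(\cdot,y)\le v^{\natural}\}$) and $v^{\flat}=v^{\natural}$ (via separation and concavity) as two separate lemmas, whereas you fold both halves into a single contradiction by first extracting a finite subcover from the open superlevel sets $\{f(\cdot,y)>c\}$ and then separating. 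Your route is more compact; the paper's decomposition has the minor expository advantage of isolating which hypotheses feed which inequality (concavity enters only in the $v^{\flat}=v^{\natural}$ half, lower semi-continuity only in the $v^{\natural}=v^{\sharp}$ half).
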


Proposition \ref{prop:sionminimax} is a mild adaptation of the Minimax theorem presented in \cite{aubin_book_2002}[Theorem 8.1] where a real-valued function is considered. In the MA-C-POMDP model described in Section \ref{sec:problem}, it is possible that $\fullccosts{u}$ and hence $\lags{u}{\lambda}$ is $\infty$ for all $\lambda \in \mcl{Y}$. We will use the same methodology as in \cite{aubin_book_2002}[Propositions 8.2 and 8.3] to prove Proposition \ref{prop:sionminimax}. In particular, the entire proof remains the same except that in Lemma \ref{lem:lem8.2:aubin:modified}, the compactness of $\mcl{X}$ is used together with Assumption 3). 

Define
\begin{align*}
f^{\sharp}(x) & :=\sup_{y \in \mcl{Y}} f(x, y), & & v^{\sharp}:=\inf_{x \in \mcl{X}} \sup_{y \in \mcl{Y}} f(x, y) \numberthis\\
f^b(y) & :=\inf_{x \in \mcl{X}} f(x, y), & & v^{\flat}:=\sup_{y \in \mcl{Y}} \inf_{x \in \mcl{X}} f(x, y).\numberthis
\end{align*}
To show the equality of $v^{\sharp}$ and $v^{\flat}$, we will introduce an intermediate value $v^{\natural}$ ($v$ natural) and prove successively that $v^{\natural}=v^{\sharp}$ and that $v^{\natural}=v^{\flat}$. 

We denote the family of finite subsets $J$ of $\mcl{Y}$ by $\mcl{J}$. We set
$$
v_J^{\sharp}:=\inf_{x \in \mcl{X}} \sup_{y \in J} f(x, y)
$$
and
$$
v^{\natural}:=\sup_{J \in \mcl{J}} v_J^{\sharp}=\sup_{J \in \mcl{J}} \inf_{x \in \mcl{X}} \sup_{y \in J} f(x, y).
$$
Since every point $y$ of $\mcl{Y}$ may be identified with the finite subset $\{y\} \in \mcl{J}$, we note that $v_{\{y\}}^{\sharp}=f^b(y)$ and consequently, $v^{\flat}=\sup_{y \in \mcl{Y}} v_{\{y\}}^{\sharp} \leq \sup_{J \in \mcl{J}} v_J^{\sharp} = v^{\natural}$. Also, since $\sup_{y \in J} f(x, y) \leq \sup_{y \in \mcl{Y}} f(x, y)$, we deduce that $v_J^{\sharp} \leq v^{\sharp}$, and hence $v^{\natural} \leq v^{\sharp}$. In summary, we have shown that
\begin{align*}
v^{\flat} \leq v^{\natural} \leq v^{\sharp} .
\end{align*}
Lemma \ref{lem:prop8.2:aubin} shows that $v^{\natural} = v^{\sharp} $ and Lemma \ref{lem:prop8.3:aubin} shows that $v^{\flat} = v^{\natural}$. This concludes the proof.

\begin{lem}\label{lem:prop8.2:aubin}
Consider a function $f : \mcl{X} \times \mcl{Y} \mapsto \mbb{R} \cup \{ \infty \} $ such that $\mcl{X}$ is compact and for each $
y \in \mcl{Y}$, $f(\cdot, y)$ is lower semi-continuous. Then, there exists $x^\star \in \mcl{X}$ such that
$$
\sup_{y \in \mcl{Y}} f(x^\star, y)=v^{\sharp}
$$
and
$$
v^{\natural}=v^{\sharp} .
$$
\end{lem}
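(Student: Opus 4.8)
The plan is to prove the two assertions of the lemma separately. The existence of a minimiser $x^\star$ of $f^\sharp$ will come from a standard compactness argument, and the identity $v^\natural = v^\sharp$ will follow by replacing the supremum over all of $\mcl{Y}$ with a supremum over a suitably chosen finite subset $J \in \mcl{J}$, the finiteness coming from an open cover of $\mcl{X}$ together with its compactness.

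First I would record that $f^\sharp(\cdot) = \sup_{y \in \mcl{Y}} f(\cdot, y)$ is lower semi-continuous on $\mcl{X}$: for each $a \in \mbb{R}$ the strict super-level set $\{x : f^\sharp(x) > a\} = \bigcup_{y \in \mcl{Y}} \{x : f(x,y) > a\}$ is a union of open sets (each $f(\cdot, y)$ being lower semi-continuous), hence open. Since $\mcl{X}$ is compact, a lower semi-continuous $\mbb{R} \cup \{\infty\}$-valued function attains its infimum on $\mcl{X}$: the sub-level sets $\{f^\sharp \le a\}$, as $a \downarrow v^\sharp$, form a decreasing family of non-empty closed subsets of the compact space $\mcl{X}$, so their intersection is non-empty; any $x^\star$ in this intersection satisfies $\sup_{y \in \mcl{Y}} f(x^\star, y) = f^\sharp(x^\star) = \inf_{x \in \mcl{X}} f^\sharp(x) = v^\sharp$, which is the first assertion.

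For the equality $v^\natural = v^\sharp$, the inequality $v^\natural \le v^\sharp$ was already established in the text preceding the statement, so it suffices to show $v^\natural \ge v^\sharp$. Fix any real $c < v^\sharp$ (this means every $c \in \mbb{R}$ when $v^\sharp = \infty$). For each $x \in \mcl{X}$ we have $f^\sharp(x) \ge v^\sharp > c$, so by definition of the supremum there is $y_x \in \mcl{Y}$ with $f(x, y_x) > c$; by lower semi-continuity of $f(\cdot, y_x)$ the set $O_x := \{x' \in \mcl{X} : f(x', y_x) > c\}$ is open and contains $x$. Thus $\{O_x\}_{x \in \mcl{X}}$ is an open cover of $\mcl{X}$, and compactness yields $x_1, \dots, x_m$ with $\mcl{X} = \bigcup_{i=1}^m O_{x_i}$. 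Put $J := \{y_{x_1}, \dots, y_{x_m}\} \in \mcl{J}$. For every $x \in \mcl{X}$, choosing $i$ with $x \in O_{x_i}$ gives $\sup_{y \in J} f(x, y) \ge f(x, y_{x_i}) > c$, hence $v^\natural \ge v_J^\sharp = \inf_{x \in \mcl{X}} \sup_{y \in J} f(x, y) \ge c$. Letting $c \uparrow v^\sharp$ gives $v^\natural \ge v^\sharp$, and therefore $v^\natural = v^\sharp$.

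The argument uses nothing beyond compactness of $\mcl{X}$ and pointwise lower semi-continuity of the sections $f(\cdot, y)$; the only point requiring slight care is that $f$ may take the value $+\infty$, but since every inequality above is against a finite threshold $c$ the construction is unaffected, and in the degenerate case $v^\sharp = \infty$ one simply has $f^\sharp \equiv \infty$ so any $x^\star$ works (this is consistent with condition (3) of Proposition \ref{prop:sionminimax}). I do not anticipate a genuine obstacle here; the one indispensable ingredient — and the heart of the lemma — is the reduction of the supremum over all of $\mcl{Y}$ to one over a finite set $J$, which is exactly the step where compactness of $\mcl{X}$ is used.
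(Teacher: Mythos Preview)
Your proof is correct. Both the existence of $x^\star$ and the identity $v^\natural = v^\sharp$ are established cleanly, and the handling of the case $v^\sharp = \infty$ is fine since all thresholds $c$ are finite.

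The paper takes a closely related but complementary route for $v^\natural = v^\sharp$. Rather than fixing $c < v^\sharp$ and extracting a finite open subcover from $\{x' : f(x', y_x) > c\}_{x}$, the paper fixes the threshold at $v^\natural$ itself, considers the closed sub-level sets $S_y := \{x : f(x,y) \le v^\natural\}$, and shows they enjoy the finite intersection property (the minimiser of $\max_{y\in J} f(\cdot, y)$ lies in $\cap_{y\in J} S_y$ since $v_J^\sharp \le v^\natural$). Compactness then yields a point $x^\star \in \cap_{y\in\mcl{Y}} S_y$, which directly gives $f^\sharp(x^\star) \le v^\natural$. The two arguments are dual uses of the same compactness of $\mcl{X}$: yours produces, for each level $c$, a single finite $J$ witnessing $v_J^\sharp \ge c$; the paper's produces a single $x^\star$ witnessing $f^\sharp(x^\star) \le v^\natural$ without any limiting step. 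Your approach is perhaps the more standard textbook compactness argument; the paper's has the minor advantage that it identifies the saddle-point candidate $x^\star$ and the equality $v^\natural = v^\sharp$ in one stroke.
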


\begin{rem}
Since the functions $f(\cdot, y)$ are lower semi-continuous, the same is true of the function $f^{\sharp}$.\footnote{Supremum of arbitrary collection of lower semi-continuous functions is lower semi-continuous.} Since $\mcl{X}$ is compact, Weierstrass's theorem implies the existence of $x^\star \in \mcl{X}$ which minimises $f^{\sharp}$. Following (3), this may be written as
\begin{align*}
&\sup_{y \in \mcl{Y}} f(x^\star, y) = f^{\sharp}(x^\star) = \inf_{x \in \mcl{X}} f^{\sharp}(x) \\
&\hspace{50pt} = \inf_{x \in \mcl{X}} \sup_{y \in \mcl{Y}} f(x, y)=v^{\sharp}.
\end{align*}
In comparison to this, Lemma \ref{lem:prop8.2:aubin} proves that $v^{\natural} = v^{\sharp} $.
\end{rem}

\begin{proof}
It suffices to show that there exists $x^\star \in \mcl{X}$ such that
\begin{align*}
\sup_{y \in \mcl{Y}} f(x^\star, y) \leq v^{\natural}.\numberthis\label{eq:vsharp<=vnatural}
\end{align*}
Since $v^{\sharp} \leq \sup_{y \in \mcl{Y}} f(x^\star, y)$ and $v^{\natural} \leq v^{\sharp}$, we shall deduce that $v^{\natural}=v^{\sharp}$.
We set
$$
S_{y}:=\l\{x \in \mcl{X} \mid f(x, y) \leq v^{\natural}\r\}.
$$
The inequality \eqref{eq:vsharp<=vnatural} is equivalent to the inclusion
\begin{align*}
x^\star \in \bigcap_{y \in \mcl{Y}} S_{y}.\numberthis\label{eq:nonemptyintersection}
\end{align*}
Thus, we must show that this intersection is non-empty.
For this, we shall prove that the $S_{y}$ are closed sets (inside the compact set $\mcl{X}$) with the finite-intersection property.\footnote{The intersection of an arbitrary collection of closed sets that lie inside a compact set and satisfy the finite-intersection property, is non-empty.}

If $v^{\natural} = \infty$, then every $S_y$ equals $\mcl{X}$ and the intersection is trivially non-empty. Therefore, WLOG, assume that $v^{\natural}$ is finite. Then the set $S_{y}$ is a lower section of the lower semi-continuous function $f(\cdot, y)$ and is thus closed.\footnote{The lower section of a lower semi-continuous function is closed. For every $\eta \in \mbb{R}$, the corresponding lower section is defined as $\{x \in \mcl{X} : f(x) \le \eta \}$.}

We show that for any finite sequence $J :=\l\{y_{1}, y_{2}, \ldots, y_{n}\r\} \in \mcl{J}$ of $\mcl{Y}$, the finite intersection
$$
\bigcap_{i \in [n]} S_{y_i} \neq \emptyset
$$
is non-empty. In fact, since $\mcl{X}$ is compact, and since $\max_{y \in J} f(\cdot, y) $ is lower semi-continuous, it follows that there exists $\hat{x} \in \mcl{X}$ which minimises this function. Such an $\hat{x} \in \mcl{X}$ satisfies
\begin{align*}
\max_{y \in J} f(\hat{x}, y) &= \inf_{x \in \mcl{X}} \max_{y \in J} f(x, y) \\
&\leq \sup_{J \in \mcl{J}} \inf_{x \in \mcl{X}} \max_{y \in J} f(x, y)=v^{\natural} .
\end{align*}
Since $\mcl{X}$ is compact, the intersection of the closed sets $S_{y}$ is non-empty and there exists $x^\star \in \mcl{X}$ satisfying \eqref{eq:nonemptyintersection} and thus \eqref{eq:vsharp<=vnatural}.
\end{proof}

\begin{lem}\label{lem:prop8.3:aubin}
Consider a function $f : \mcl{X} \times \mcl{Y} \mapsto \mbb{R} \cup \{ \infty \} $ such that $\mcl{X}$ and $\mcl{Y}$ are convex sets, (i) for each $y \in \mcl{Y}$, $f(\cdot, y)$ is convex, and (ii) for each $x \in \mcl{X}$, $f(x, \cdot)$ is concave. Then, $v^{\flat}=v^{\natural}$.
\end{lem}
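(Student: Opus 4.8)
The plan is to prove the single remaining inequality $v^{\natural} \le v^{\flat}$, the reverse $v^{\flat} \le v^{\natural}$ having already been recorded in the running summary. By the definition $v^{\natural} = \sup_{J \in \mcl{J}} v_J^{\sharp}$, it suffices to show $v_J^{\sharp} \le v^{\flat}$ for an arbitrary finite $J = \{ y_1, \dots, y_n \} \subseteq \mcl{Y}$. The engine is a Kneser-type lemma: there exist weights $\lambda_1, \dots, \lambda_n \ge 0$ with $\sum_i \lambda_i = 1$ such that
\begin{align*}
\inf_{x \in \mcl{X}} \sum_{i=1}^{n} \lambda_i f(x, y_i) \ge v_J^{\sharp}.
\end{align*}
Granting this, set $\bar{y} := \sum_{i=1}^{n} \lambda_i y_i$, which lies in $\mcl{Y}$ because $\mcl{Y}$ is convex; concavity of $f(x, \cdot)$ gives $\sum_i \lambda_i f(x, y_i) \le f(x, \bar{y})$ for every $x \in \mcl{X}$, whence
\begin{align*}
v_J^{\sharp} \le \inf_{x \in \mcl{X}} \sum_{i=1}^{n} \lambda_i f(x, y_i) \le \inf_{x \in \mcl{X}} f(x, \bar{y}) = f^{b}(\bar{y}) \le v^{\flat},
\end{align*}
and taking the supremum over $J \in \mcl{J}$ finishes the proof.

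To establish the Kneser-type lemma I would use a finite-dimensional separation argument, in the spirit of \cite{aubin_book_2002}. First dispose of a degenerate case: if for every $x \in \mcl{X}$ some (hence, by hypothesis (3) of Proposition \ref{prop:sionminimax}, every) $f(x, y_i)$ equals $\infty$, then $f^{b}(y) = \infty$ for all $y$, so $v^{\flat} = \infty$ and the desired inequality is trivial; so assume there is $x_0$ with all $f(x_0, y_i)$ finite, and note $v_J^{\sharp} \le \max_i f(x_0, y_i) < \infty$. Let $\mcl{X}_0 \subseteq \mcl{X}$ be the set of $x$ at which all $f(x, y_i)$ are finite; it is convex by convexity of $\mcl{X}$ and of each $f(\cdot, y_i)$. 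Define
\begin{align*}
K := \l\{ \l( f(x, y_1) + t_1, \dots, f(x, y_n) + t_n \r) \in \mbb{R}^n : x \in \mcl{X}_0,\ t_1, \dots, t_n \ge 0 \r\}.
\end{align*}
A short computation using convexity of $\mcl{X}$ and of the $f(\cdot, y_i)$ shows $K$ is convex (the slack from Jensen's inequality is absorbed into the $t_i$'s), and $K$ is upward closed by construction. By the definition $v_J^{\sharp} = \inf_x \max_i f(x, y_i)$, no point of $K$ has all coordinates strictly below $v_J^{\sharp}$, i.e.\ $K$ is disjoint from the open set $\{ z \in \mbb{R}^n : z_i < v_J^{\sharp}\ \forall i \}$. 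Applying the Hyperplane Separation Theorem (Proposition \ref{prop:separation_theorem}) to $K$ and the point $(v_J^{\sharp}, \dots, v_J^{\sharp})$ — possibly after an arbitrarily small downward perturbation of that point and a passage to the limit over the perturbation — yields a nonzero $\lambda \in \mbb{R}^n$ with $\inf_{w \in K} \dotp{\lambda}{w} \ge v_J^{\sharp} \sum_i \lambda_i$; since $K$ is upward closed, $\lambda$ must be coordinatewise nonnegative, and after normalizing $\sum_i \lambda_i = 1$ and taking $w = (f(x, y_i))_i \in K$ (i.e.\ $t = 0$) we get $\sum_i \lambda_i f(x, y_i) \ge v_J^{\sharp}$ for all $x \in \mcl{X}_0$, and hence for all $x \in \mcl{X}$, because at $x \notin \mcl{X}_0$ hypothesis (3) forces every $f(x, y_i) = \infty$. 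This is the claim.

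The main obstacle is the separation step and its bookkeeping around the value $+\infty$: one must (i) peel off the genuinely degenerate case where $v^{\flat} = \infty$, (ii) restrict to $\mcl{X}_0$ so that $K$ lives in $\mbb{R}^n$ and remains convex, and (iii) extract nonnegativity of the separating functional and the correct normalization from the geometry of $K$. Beyond that the argument is entirely finite-dimensional and uses none of the topology on $\mcl{X}$: compactness of $\mcl{X}$ and semicontinuity of $f$ enter only in Lemma \ref{lem:prop8.2:aubin}. The final step — concavity of $f(x, \cdot)$ together with convexity of $\mcl{Y}$, turning the mixture $\lambda$ over $\{y_1, \dots, y_n\}$ into a single admissible $\bar{y} \in \mcl{Y}$ — is then immediate.
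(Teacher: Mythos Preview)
Your proof is correct and follows essentially the same route as the paper: both arguments pass through the intermediate quantity $w_J := \sup_{\lambda \in M_J} \inf_{x} \sum_i \lambda_i f(x,y_i)$ (which you leave unnamed), establishing $v_J^{\sharp} \le w_J$ by a hyperplane-separation argument applied to the convex set $\phi_J(\mcl{X}_0) + \mbb{R}_{\ge 0}^n$ in $\mbb{R}^n$, and then $w_J \le v^{\flat}$ by concavity of $f(x,\cdot)$ together with convexity of $\mcl{Y}$. The paper packages these two steps as Lemmas~\ref{lem:lem8.2:aubin:modified} and~\ref{lem:lem8.3:aubin} respectively; you fold them into one chain via your ``Kneser-type'' existence statement.

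The one substantive difference is in the handling of the restriction to finite values. The paper's Lemma~\ref{lem:lem8.2:aubin:modified} invokes compactness of $\mcl{X}$ and lower semi-continuity of $f(\cdot,y)$ (via Weierstrass) to argue that the infimum over $\mcl{X}$ agrees with the infimum over $\mcl{X}_J = \bigcap_{y\in J}\operatorname{dom} f(\cdot,y)$. You instead use hypothesis~(3) of Proposition~\ref{prop:sionminimax} directly to dispose of points outside $\mcl{X}_0$, and then pass to the limit in $\epsilon$ by compactness of the simplex $M_J$ rather than of $\mcl{X}$. So your remark that compactness and semicontinuity of $f$ are not needed for this lemma is correct and is a mild sharpening of the paper's presentation; only convexity and hypothesis~(3) are actually used.
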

\begin{proof}
We set $M_J:=\l\{\lambda \in \mbb{R}_{\ge 0}^{|J|} \mid \sum_{i=1}^n \lambda_i=1\r\}$. With any finite (ordered) subset $J \defeq = \l\{y_1, y_2, \ldots, y_n\r\}$, we associate the mapping $\phi_J$ from $\mcl{X}$ to $\l( \mbb{R} \cup \{ \infty \} \r)^{|J|}$ defined by
$$
\phi_J(x):=\l(f\l(x, y_1\r), \ldots, f\l(x, y_n\r)\r)
$$
We also set
$$
w_J:=\sup_{\lambda \in M_J} \inf_{x \in \mcl{X}} \dotp{\lambda}{\phi_J(x)}
$$
We prove successively that
\begin{enumerate}
    \item $\sup_{J\in\mcl{J}} w_J \leq v^{\flat}$ (Lemma \ref{lem:lem8.3:aubin}).
    \item $\sup_{J\in\mcl{J}} v^{\sharp}_{J}  \le \sup_{J\in\mcl{J}} w_J$ (Lemma \ref{lem:lem8.2:aubin:modified}).
\end{enumerate}
Hence, the inequalities
\begin{align*}
v^{\natural} = \sup_{J \in \mcl{J}} v_J^{\sharp} \leq \sup_{J \in \mcl{J}} w_J \leq v^{\flat} \leq v^{\natural}
\end{align*}
imply the desired equality 
 $v^{\flat}=v^{\natural}$.
\end{proof}

\begin{lem}\label{lem:lem8.3:aubin}
Consider a function $f : \mcl{X} \times \mcl{Y} \mapsto \mbb{R} \cup \{ \infty \} $ such that $\mcl{Y}$ is convex and for each $x \in \mcl{X}$, $f(x, \cdot)$ is concave. Then, for any finite subset $J$ of $\mcl{Y}$, we have $w_J \leq v^{\flat}$. Hence, $$\sup_{J\in\mcl{J}} w_J \le v^{\flat}.$$    
\end{lem}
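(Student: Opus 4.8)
The plan is to exploit the concavity of $f(x,\cdot)$ together with the convexity of $\mcl{Y}$ so as to collapse a convex combination of the values $f(x,y_1),\dots,f(x,y_n)$ into a single value $f(x,y_\lambda)$ evaluated at an \emph{admissible} point $y_\lambda\in\mcl{Y}$. Fix a finite set $J=\{y_1,\dots,y_n\}\in\mcl{J}$ and a weight vector $\lambda\in M_J$. Since $\mcl{Y}$ is convex and $\lambda$ is a probability vector, the point $y_\lambda\defeq\sum_{i=1}^{n}\lambda_i\,y_i$ belongs to $\mcl{Y}$. This choice is the crux of the argument; everything else is bookkeeping with extended reals.

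First I would record the pointwise inequality. For every $x\in\mcl{X}$, concavity of $f(x,\cdot)$ — applied inductively from the two-point definition, discarding the indices $i$ with $\lambda_i=0$ and using the convention $0\cdot\infty=0$ — yields
\[
f(x,y_\lambda)\ \ge\ \sum_{i=1}^{n}\lambda_i\, f(x,y_i)\ =\ \dotp{\lambda}{\phi_J(x)}.
\]
This is the one spot where the $\mbb{R}\cup\{\infty\}$-valued nature of $f$ needs a little care, and it is the main (mild) obstacle: one has to check that the finite Jensen inequality persists when some of the values $f(x,y_i)$ equal $+\infty$, which it does under the stated conventions (if any $\lambda_i>0$ with $f(x,y_i)=\infty$, both sides are $+\infty$).

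Next I would pass to infima and then suprema. Since the displayed inequality holds for all $x\in\mcl{X}$, taking $\inf_{x\in\mcl{X}}$ on both sides gives $\inf_{x\in\mcl{X}} f(x,y_\lambda)\ \ge\ \inf_{x\in\mcl{X}}\dotp{\lambda}{\phi_J(x)}$. Because $y_\lambda\in\mcl{Y}$, the left-hand side is at most $v^{\flat}=\sup_{y\in\mcl{Y}}\inf_{x\in\mcl{X}}f(x,y)$, so $v^{\flat}\ge\inf_{x\in\mcl{X}}\dotp{\lambda}{\phi_J(x)}$. As $\lambda\in M_J$ was arbitrary, taking $\sup_{\lambda\in M_J}$ yields $v^{\flat}\ge w_J$; taking $\sup_{J\in\mcl{J}}$ then yields $\sup_{J\in\mcl{J}} w_J\le v^{\flat}$, as claimed. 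I would close by noting that the argument uses neither compactness of $\mcl{X}$ nor any semicontinuity — only convexity of $\mcl{Y}$ and concavity of $f$ in its second argument — which is exactly the hypothesis set of the lemma.
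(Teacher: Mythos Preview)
Your proof is correct and follows essentially the same approach as the paper: form $y_\lambda=\sum_i\lambda_i y_i\in\mcl{Y}$ by convexity, apply concavity of $f(x,\cdot)$ to get $\dotp{\lambda}{\phi_J(x)}\le f(x,y_\lambda)$, take infimum over $x$, bound by $v^{\flat}$, then take supremum over $\lambda\in M_J$. Your added remarks on handling the $+\infty$ case and on which hypotheses are actually used are accurate but not present in the paper's terser version.
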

\begin{proof}
With each $\lambda \in M_J$, we associate the point $y_\lambda:=\sum_{i=1}^n \lambda_i y_i$ which belongs to $\mcl{Y}$ since $\mcl{Y}$ is convex. The concavity of the functions $\l\{ f(x, \cdot)\r\}_{x\in\mcl{X}}$ implies that
\begin{align*}
\forall x \in \mcl{X}, \quad \sum_{i=1}^n \lambda_i f\l(x, y_i\r) \leq f\l(x, y_\lambda\r).
\end{align*}
Consequently,
\begin{align*}
\inf_{x \in \mcl{X}} \sum_{i=1}^n \lambda_i f\l(x, y_i\r) &\leq \inf_{x \in \mcl{X}} f\l(x, y_\lambda\r) \\
&\leq \sup_{y \in \mcl{Y}} \inf_{x \in \mcl{X}} f(x, y) \defeq v^{\flat}.
\end{align*}
The proof is completed by taking the supremum over $M_J$.
\end{proof}

\begin{lem}\label{lem:lem8.2:aubin:modified}
Consider a function $f : \mcl{X} \times \mcl{Y} \mapsto \mbb{R} \cup \{ \infty \} $ such that $\mcl{X}$ is a convex compact topological space, for each $y \in \mcl{Y}$, $f(\cdot, y)$ is convex and lower semi-continuous, and $f (x, y) = \infty$ implies $f(x, y') = \infty$ for all $y'\in\mcl{Y}$.
Then, 
\begin{align*}
v^{\natural} \defeq \sup_{J \in \mcl{J}} v_J^{\sharp} \leq \sup_{J \in \mcl{J}} w_J .
\end{align*}
\end{lem}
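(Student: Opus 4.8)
The plan is to prove the stronger pointwise statement $v_J^{\sharp}\le w_J$ for every finite $J=\{y_1,\dots,y_n\}\in\mcl{J}$ and then take the supremum over $J\in\mcl{J}$. Fix such a $J$ and write $\phi_J(x)=(f(x,y_1),\dots,f(x,y_n))$, so that $v_J^{\sharp}=\inf_{x\in\mcl{X}}\max_i f(x,y_i)$ and $w_J=\sup_{\lambda\in M_J}\inf_{x\in\mcl{X}}\dotp{\lambda}{\phi_J(x)}$. First I would dispose of the degenerate cases. If $v_J^{\sharp}=-\infty$ there is nothing to prove. Let $\mcl{X}_{\mathrm{fin}}:=\{x\in\mcl{X}:f(x,y_1)<\infty\}$; by Assumption 3) (applied with $y=y_1$) this set equals $\{x:f(x,y_i)<\infty\ \forall i\}$, and on its complement $\phi_J(x)=(\infty,\dots,\infty)$. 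If $\mcl{X}_{\mathrm{fin}}=\emptyset$, then $\max_i f(x,y_i)=\infty$ for all $x$, so $v_J^{\sharp}=\infty$; moreover every $\lambda\in M_J$ has some $\lambda_{i_0}>0$, whence $\dotp{\lambda}{\phi_J(x)}=\infty$ for all $x$ (no coordinate is $-\infty$ since $f$ is $\mbb{R}\cup\{\infty\}$-valued), so $w_J=\infty=v_J^{\sharp}$. This is exactly where Assumption 3) — and, in the presentation here, lower semicontinuity of $x\mapsto\max_i f(x,y_i)$ on the compact $\mcl{X}$ — enters. From now on assume $\mcl{X}_{\mathrm{fin}}\ne\emptyset$; picking any $x_0\in\mcl{X}_{\mathrm{fin}}$ gives $v_J^{\sharp}\le\max_i f(x_0,y_i)<\infty$, so $v_J^{\sharp}\in\mbb{R}$, and clearly $v_J^{\sharp}=\inf_{x\in\mcl{X}_{\mathrm{fin}}}\max_i f(x,y_i)$.

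Next I would run a finite-dimensional separation argument. Since $f(\cdot,y_1)$ is convex, its effective domain $\mcl{X}_{\mathrm{fin}}$ is a convex subset of $\mcl{X}$; and since each $f(\cdot,y_i)$ is convex, the set
\[
D:=\bigl\{z\in\mbb{R}^n:\ \exists\,x\in\mcl{X}\ \text{with}\ \phi_J(x)\le z\ \text{componentwise}\bigr\}
= \bigcup_{x\in\mcl{X}_{\mathrm{fin}}}\bigl(\phi_J(x)+\mbb{R}^n_{\ge 0}\bigr)
\]
is nonempty, upward closed ($D+\mbb{R}^n_{\ge 0}\subseteq D$), and convex — convexity following from convexity of $\mcl{X}_{\mathrm{fin}}$ together with the componentwise inequality $\theta\phi_J(x)+(1-\theta)\phi_J(x')\ge\phi_J(\theta x+(1-\theta)x')$. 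For any real $\mu<v_J^{\sharp}$, the point $(\mu,\dots,\mu)$ is not in $D$, since otherwise some $x\in\mcl{X}_{\mathrm{fin}}$ would satisfy $\max_i f(x,y_i)\le\mu<v_J^{\sharp}$, contradicting $v_J^{\sharp}=\inf_{x\in\mcl{X}_{\mathrm{fin}}}\max_i f(x,y_i)$. Proposition \ref{prop:separation_theorem} then provides $p\ne 0$ with $\inf_{z\in D}\dotp{p}{z}\ge\mu\sum_i p_i$; upward closedness of $D$ forces $p\ge 0$ (a negative coordinate would make the infimum $-\infty$), hence $\sum_i p_i>0$, and I set $\lambda:=p/\sum_i p_i\in M_J$.

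Finally, since $\phi_J(x)\in D$ for every $x\in\mcl{X}_{\mathrm{fin}}$, the separation inequality gives $\dotp{\lambda}{\phi_J(x)}\ge\mu$ on $\mcl{X}_{\mathrm{fin}}$, while $\dotp{\lambda}{\phi_J(x)}=\infty\ge\mu$ for $x\notin\mcl{X}_{\mathrm{fin}}$ (again using Assumption 3) and $\lambda\ne 0$); therefore $\inf_{x\in\mcl{X}}\dotp{\lambda}{\phi_J(x)}\ge\mu$, so $w_J\ge\mu$. Letting $\mu\uparrow v_J^{\sharp}$ yields $w_J\ge v_J^{\sharp}$, and taking $\sup_{J\in\mcl{J}}$ on both sides gives $v^{\natural}=\sup_{J\in\mcl{J}}v_J^{\sharp}\le\sup_{J\in\mcl{J}}w_J$, which is the claim.

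The genuinely delicate point — the main obstacle — is the bookkeeping of the value $+\infty$: one must (a) carry out the separation only after implicitly excising the part of $\mcl{X}$ where $\phi_J$ equals $(\infty,\dots,\infty)$, so that $D\subseteq\mbb{R}^n$ consists of honest finite vectors on which Proposition \ref{prop:separation_theorem} applies, and treat $\mcl{X}_{\mathrm{fin}}=\emptyset$ as a genuinely separate case; and (b) when passing from $\inf$ over $\mcl{X}_{\mathrm{fin}}$ back to $\inf$ over all of $\mcl{X}$, guarantee via Assumption 3) that on the discarded points all coordinates of $\phi_J(x)$ are simultaneously infinite, so that $\dotp{\lambda}{\phi_J(x)}=\infty$ for the (nonzero) $\lambda\in M_J$ produced above. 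Everything else — convexity of the effective domain, convexity and upward closedness of $D$, nonnegativity of the separating normal, and the monotone limit $\mu\uparrow v_J^{\sharp}$ — is routine.
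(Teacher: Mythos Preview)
Your proof is correct and rests on the same engine as the paper's: separate a point on the diagonal of $\mbb{R}^n$ from the convex set $D=\phi_J(\mcl{X}_{\mathrm{fin}})+\mbb{R}_{\ge 0}^n$ (which coincides with the paper's $\phi_J(\mcl{X}_J)+\mbb{R}_{\ge 0}^n$ by Assumption~3)), then normalize the separating vector into $M_J$. The only difference is the direction in which the separation is run. The paper argues by contradiction that $(w_J+\eps)\mbf{1}\in D$, producing an $x_\eps$ with $\max_i f(x_\eps,y_i)\le w_J+\eps$ and hence $v_J^{\sharp}\le w_J+\eps$; you instead separate $(\mu,\ldots,\mu)$ for $\mu<v_J^{\sharp}$ directly from $D$, and the normalized normal $\lambda\in M_J$ witnesses $w_J\ge\mu$. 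These are contrapositive variants of the same argument. One incidental gain of your route is that you never actually invoke compactness of $\mcl{X}$ or lower semi-continuity of $f(\cdot,y)$: the paper uses both, via Weierstrass, to justify restricting the infimum in $w_J$ from $\mcl{X}$ to $\mcl{X}_J$, but that restriction is already automatic from Assumption~3) alone, as your treatment of $x\notin\mcl{X}_{\mathrm{fin}}$ shows. Consequently your parenthetical that lower semi-continuity ``enters'' in the degenerate case $\mcl{X}_{\mathrm{fin}}=\emptyset$ is not accurate --- your argument never uses it anywhere.
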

\begin{proof}
WLOG we assume that $\sup_{J \in \mcl{J}} w_J < \infty$. In this case, we can rewrite $w_J$ as $\supinf{\lambda\in M_J}{x\in \mcl{X}_J} \dotp{\lambda}{\phi_J(x)}$ where 
$$\mcl{X}_J \defeq \bigcap_{y\in J} dom f(\cdot, y).$$
To see this, note that $\dotp{\lambda}{\phi_J(x)} $ is a lower semi-continuous function on the compact space $\mcl{X}$. By Weierstrass theorem, $\dotp{\lambda}{\phi_J(x)} $ achieves its minimum in $\mcl{X}$ and we can write $w_J = \sup_{\lambda \in M_J} \dotp{\lambda}{\phi_J(\hat{x}(\lambda))}$. Suppose that $\hat{x}(\lambda) \in \mcl{X} \setminus \mcl{X}_J$, i.e., there exists $y \in J$ such that $\hat{x}(\lambda) \notin dom f(\cdot, y)$. This implies that $\hat{x}(\lambda) \notin dom f(\cdot, y')$ for all $y' \in J$. This renders $w_J$ to be infinity which contradicts our assumption $\sup_{J\in\mcl{J}} w_J < \infty $.

Therefore, now onward, we assume each $w_J = \sup_{\lambda\in M_J} \inf_{x \in \mcl{X}_J} \dotp{\lambda }{ \phi_J(x) }$. To prove the lemma, it suffices to show that $v_J^{\sharp} \le w_J $. Let $\eps>0$ and denote $\mbf{1} \defeq (1, \ldots, 1)$. We shall show that
\begin{align*}
\l( w_J + \eps \r) \mbf{1} \in \phi_J(\mcl{X}_J) + \mbb{R}_{\ge 0}^n .\numberthis\label{eq:wj_in_convex_set}
\end{align*}
Suppose that this is not the case. Since $\phi_J(\mcl{X}_J)+\mbb{R}_{\ge 0}^n$ is a convex set in $\mbb{R}^n $, following Lemma \ref{lem:lem8.1:aubin:modified}, we may use the hyperplane separation theorem (see Proposition \ref{prop:separation_theorem}), via which there exists $\rho \in \mbb{R}^n$, $\rho \neq 0$, such that
\begin{align*}
\sum_{i=1}^n \rho_i \l( w_J + \eps \r) &=\dotp{\rho}{\l(w_J + \eps\r) \mbf{1}} \\
&\leq \inf_{v \in \phi_J(\mcl{X}_J)+\mbb{R}_{\ge 0}^n} \dotp{\rho}{v}\\
& =\inf_{x \in \mcl{X}_J} \dotp{\rho}{ \phi_J(x)} + \inf_{u \in \mbb{R}_{\ge 0}^n} \dotp{\rho}{u}.
\end{align*}
Then $\inf_{u \in \mbb{R}_{\ge 0}^n} \dotp{\rho}{u}$ is bounded below and consequently, $\rho$ belongs to $\mbb{R}_{\ge 0}^n$ and $\inf_{u \in \mbb{R}_{\ge 0}^n} \dotp{\rho}{u}$ is equal to 0. Since $\rho$ is non-zero, $\sum_{i=1}^n \rho_i$ is strictly positive. We set $\bar{\lambda} =\rho / \sum_{i=1}^n \rho_i \in M_J$ and 
deduce that
\begin{align*}
w_J + \eps &\leq \inf_{x \in \mcl{X}_J } \dotp{\bar{\lambda}} {\phi_J(x)} \\
&\leq \sup_{\substack{\lambda \in M_J}} \inf_{x \in \mcl{X}_J} \dotp{\lambda}{ \phi_J(x) }= w_J.
\end{align*}
This is impossible and thus \eqref{eq:wj_in_convex_set} is established, which implies that there exist $x_{\eps} \in \mcl{X}_J$ and $u_{\eps} \in \mbb{R}_{\ge 0}^n$ such that $\l(w_J+\eps\r) \mathbf{1}=$ $\phi_J\l(x_{\eps}\r)+u_{\eps}$.
From the definition of $\phi_J$, we deduce that
\begin{align*}
\forall i=1, \ldots, n, \quad f\l(x_{\eps}, y_i\r) \leq w_J+\eps,
\end{align*}
and hence 
\begin{align*}
v_J^{\sharp} \leq \max _{i=1, \ldots, n} f\l(x_{\eps}, y_i\r) \leq w_J+\eps.
\end{align*}
We complete the proof of the lemma by letting $\eps$ tend to 0.   
\end{proof}

\begin{lem}\label{lem:lem8.1:aubin:modified}
Consider a function $f : \mcl{X} \times \mcl{Y} \mapsto \mbb{R} \cup \{ \infty \} $ such that $\mcl{X}$ is convex and for each $y \in \mcl{Y}$, $f(\cdot, y)$ is convex. Then, $\phi_J(\mcl{X}_J) + \mbb{R}_{\ge 0}^n$ is a convex set in $\mbb{R}^n$.    
\end{lem}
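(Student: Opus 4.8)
The plan is to verify convexity directly from the definition: pick two points of $\phi_J(\mcl{X}_J) + \mbb{R}_{\ge 0}^n$, take an arbitrary convex combination, and exhibit it as a point of the same set. Concretely, suppose $v = \phi_J(x) + u$ and $v' = \phi_J(x') + u'$ with $x, x' \in \mcl{X}_J$ and $u, u' \in \mbb{R}_{\ge 0}^n$, and fix $\theta \in [0,1]$. The target is to show $\theta v + (1-\theta) v' \in \phi_J(\mcl{X}_J) + \mbb{R}_{\ge 0}^n$.

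The first step is to locate the right candidate preimage, namely $x_\theta := \theta x + (1-\theta) x'$, which lies in $\mcl{X}$ since $\mcl{X}$ is convex. The key point is that $x_\theta$ actually lies in $\mcl{X}_J = \bigcap_{y\in J} \mathrm{dom}\, f(\cdot, y)$: for each $i \in [n]$, convexity of $f(\cdot, y_i)$ gives $f(x_\theta, y_i) \le \theta f(x, y_i) + (1-\theta) f(x', y_i) < \infty$, the finiteness coming from $x, x' \in \mcl{X}_J$. This is also the only place the restriction to $\mcl{X}_J$ (rather than all of $\mcl{X}$) matters — it is what keeps all the arithmetic in $\mbb{R}^n$ rather than $(\mbb{R}\cup\{\infty\})^n$, where the ``$+\mbb{R}_{\ge 0}^n$'' slack argument would break down.

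The second step is to absorb the convexity defect into the nonnegative cone. Define $w \in \mbb{R}^n$ componentwise by $w_i := \theta f(x, y_i) + (1-\theta) f(x', y_i) - f(x_\theta, y_i)$; the inequality just established shows $w \in \mbb{R}_{\ge 0}^n$, and by construction $\theta \phi_J(x) + (1-\theta)\phi_J(x') = \phi_J(x_\theta) + w$. Hence
\begin{align*}
\theta v + (1-\theta) v' &= \theta \phi_J(x) + (1-\theta)\phi_J(x') + \theta u + (1-\theta) u' \\
&= \phi_J(x_\theta) + \bigl( w + \theta u + (1-\theta) u' \bigr),
\end{align*}
and $w + \theta u + (1-\theta) u' \in \mbb{R}_{\ge 0}^n$ since it is a sum of elements of the cone. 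Therefore $\theta v + (1-\theta) v' \in \phi_J(\mcl{X}_J) + \mbb{R}_{\ge 0}^n$, which is exactly convexity.

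I do not anticipate a genuine obstacle here; the only subtlety worth stating carefully is the bookkeeping with $\infty$, i.e., making sure $x_\theta$ stays in the effective domain so that all vectors involved are honest elements of $\mbb{R}^n$. Everything else is the standard ``convex functions have convex epigraphs, projected onto finitely many coordinates'' observation.
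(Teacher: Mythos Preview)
Your proof is correct and follows essentially the same approach as the paper: take a convex combination of two elements, use convexity of $\mcl{X}$ to form $x_\theta$, use convexity of each $f(\cdot, y_i)$ to show $x_\theta \in \mcl{X}_J$ and that the componentwise defect $w$ is nonnegative, then absorb $w$ together with $\theta u + (1-\theta)u'$ into the cone $\mbb{R}_{\ge 0}^n$. Your write-up is in fact a bit cleaner in isolating the role of $\mcl{X}_J$ for keeping everything in $\mbb{R}^n$, but there is no substantive difference.
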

\begin{proof} 
Take any convex combination $\alpha_1 \l(\phi_J (x_1) + u_1 \r) + \alpha_2 \l(\phi_J(x_2) + u_2\r)$ where $\alpha_1, \alpha_2 \geq 0$, $\alpha_1 + \alpha_2 = 1$, $x_1$ and $x_2$ are in $\mcl{X}_J$, and $u_1$ and $u_2$ are in $\mbb{R}_{\ge 0}^n$. Let $x = \alpha_1 x_1 + \alpha_2 x_2$. For each $y \in J$, the function $f(\cdot, y)$ is convex, therefore $\phi_J(x) \le \alpha_1 \phi_J(x_1) + \alpha_2 \phi_J(x_2) < \infty$ (latter by definition of $\mathcal{X}_J$). Hence, $x \in \mcl{X}_J$. We can write the convex combination in the form $\phi_J (x) + u$ where $u = \alpha_1 u_1 + \alpha_2 u_2 + \alpha_1 \phi_J(x)+\alpha_2 \phi_J(y)-\phi_J(x)$. Note that $u \in \mbb{R}_{\ge 0}^{n}$ because $\phi_J(x) \le \alpha_1 \phi_J(x_1) + \alpha_2 \phi_J(x_2)$. Consequently, $\alpha_1\l(\phi_J\l(x\r)+u_1\r)+\alpha_2\l(\phi_J\l(y\r)+u_2\r)=\phi_J(x)+u$ belongs to $\phi_J(\mcl{X}_J)+\mbb{R}_{\ge 0}^n$.
\end{proof}

\section{List of Symbols}\label{sec:appendix:notation}
\begin{itemize}
\setlength\itemsep{2pt}
\item MDP: Markov Decision Process.
\item POMDP: Partially Observable Markov Decision Process.
\item SA-MDP: Single-Agent MDP.
\item SA-POMDP: Single-Agent POMDP.
\item SA-C-MDP: Single-Agent Constrained MDP.
\item SA-C-POMDP: Single-Agent Constrained POMDP.
\item MA-POMDP: Multi-Agent POMDP.
\item MA-C-POMDP: Multi-Agent Constrained POMDP.
\item MARL: Multi-Agent Reinforcement Learning.
\item CTDE: Centralized Training Distributed Execution.
\item ASPS: Approximate Sufficient Private State.
\item ASCS: Approximate Sufficient Common State.
\item RNN: Recurrent Neural Network.
\item FNN: Feed-forward Neural Network.
\item $N$: Number of agents.
\item $\sspace$: State space.
\item $\onspace{0}$: Space of common observations of all agents.
\item $\onspace{n}$: Space of private observations of agent-$n$.
\item $\ospace$: Joint-observation space, given by $\ospace = \prod_{n=0}^{N} \onspace{n}$.
\item $\anspace{n}$: Space of actions of agent $n$.
\item $\aspace$: Joint-action space, given by $\aspace = \prod_{n=1}^{N} \anspace{n}$.
\item $M_1(\cdot)$: Set of all probability measures on topological space $\cdot$ endowed with the topology of weak convergence. 
\item $\mcl{P}_{tr}$: Transition-law. See \eqref{eq:transitionlaw}.
\item $P_{saBo}$: Probability that the next state is in the measurable set $B$ and the next joint-observation is $o$ given action $a$ is taken at current state $s$. See \eqref{eq:psabo}.
\item $c, d$: Immediate-costs: $c$ is the immediate objective cost and $d$ is the immediate constraint cost.
\item $\udl{c}, \ov{c}, \udl{d}, \ov{d}$: Upper and lower bounds on immediate costs. See Assumption \ref{assmp:boundedcosts}.
\item $\alpha$: Discount factor.
\item $P_1$: Initial distribution on the initial state and joint-observation. See \eqref{eq:initialdistribution}.
\item $\uspacen{n}$: Space of policies of agent $n$.
\item $\un{u}{n}$: Used to denote a policy of agent $n$. (in $\uspacen{n}$).
\item $\uspace$: Space of (decentralized) policy-profiles, $\prod_{n=1}^{N} \uspacen{n}$.
\item $u$: Used to denote a policy-profile (in $\uspace$). 
\item $\uspacemix$: Space of (decentralized) mixtures of policy-profiles in $\uspace$, i.e., $\prod_{n=1}^{N} \m{\uspacen{n}}$.
\item $\mu$: Used to denote a typical element of $\uspacemix$,  given by $\mymathop{\times}_{n=1}^{N} \mun{n}$.
\item $\prup{u}{P_1}, \E{u}{P_1}$: Probability measure and expectation operator corresponding to policy-profile $u\in \uspace$ and initial-distribution $P_1$.
\item $C$: Infinite-horizon expected total discounted objective cost. See \eqref{eq:C}.
\item $D$: Infinite-horizon expected total constraint cost. See \eqref{eq:D}.
\item $\Hstn{t}{0}$: Common history of all agents at time $t$.
\item $\Hstn{t}{n}$: Private history of agent $n$ at time $t$.
\item $\Hst{t}$: Joint history at time $t$, given by $\Hstn{t}{0:N}$.
\item $\hstnspace{t}{n}, \hstspace{t}, \hsspace$: See \eqref{eq:hthnandh}.
\item $\optcosts$: Optimal solution of \eqref{eq:macpomdp}. See \eqref{eq:optccost:infsup}.
\item $\zeta$: Slack of feasible policy-profile $\ov{u}$ in Assumption \ref{assmp:slatercondition}. See \eqref{eq:slatercondition}.
\item $\mcl{Y}$: Space of non-negative Lagrange-multipliers, $\l\{ \lambda \in \mbb{R}^K: \lambda\ge 0\r\}$.
\item $L$: Lagrangian function for \eqref{eq:macpomdp}. See \eqref{eq:lagrangian}.
\item $\wh{L}$: Extended Lagrangian function for \eqref{eq:macpomdp}. See \eqref{eq:lagrangianmix}.
\item $\xuspace$: $\prod_{n=1}^{N}\xtspace{\uspacen{n}}$, also see \eqref{eq:xuspace}.
\item $\pruphsts{u}{t}{\hst{h}{t}, \at{t}}$: 
$\prup{u}{P_1}\l( \Hst{t} = \hst{h}{t}, \At{t} = \at{t} \r)$.
\item $\zuphsts{u}{t}{\hst{h}{t}, \at{t}}$: 
$\E{u}{P_1}\l[ \cCost \mid \Hst{t} = \hst{h}{t}, \At{t} = \at{t} \r] $.
\item $^i u$: $i^{th}$ policy-profile in the sequence $\l\{^i u\r\}_{i=1}^{\infty}$.
\item $\Gt{t} = \Gtn{t}{1:N}$: Coordinator's prescriptions at time $t$.
\item $\gtspace{t}$: Set of all possible prescriptions at time $t$.
\item $ \xtspace{\gtspace{t}}$: See \eqref{eq:xgtspace}.
\item $\wtHstn{t}{0}$: Prescription-observation history of coordinator.
\item $\un{l}{\lambda}$: Immediate-cost in unconstrained version of \eqref{eq:macpomdp} parametrized by Lagrange-multiplier $\lambda\in\mcl{Y}$. See \eqref{eq:l_lamda}.
\item $L_T$: See \eqref{eq:L_T}.
\item $\Qtl{t,T}{\lambda}$: See \eqref{eq:Q_tT_lamda}. 
\item $\Vtl{t,T}{\lambda}$: Optimal cost-to-go from time $t\in[T]$ onward in a finite-horizon $T$. See \eqref{eq:V_tT_lamda}.
\item $\utn{v}{1:T}{\lambda, \star}$: See \eqref{eq:v_tT_lamda_star}.
\item $\Vtl{t}{\lambda}$: See \eqref{eq:V_t_lamda}.
\item $\Qtl{t}{\lambda}$: See \eqref{eq:Q_t_lamda}.
\item $ \un{\udl{l}}{\lambda}$: Lower bound on $\un{l}{\lambda}$. See \eqref{eq:lower_and_upper_lcost}.
\item $\un{\ov{l}}{\lambda} $: Upper bound on $\un{l}{\lambda}$. See \eqref{eq:lower_and_upper_lcost}.
\item $ \Pi_t$: Conditional distribution of $\Stt{t}, \Hstn{t}{1:N}$ given $\wtHstn{t}{0}$, $\pr_{P_1} \l( \Stt{t}, \Hstn{t}{1:N} \mid \wtHstn{t}{0} \r)$.
\item $\Zhtn{t}{1:N}$: ASPS at time $t$. See Definitions \ref{dfn:asps_generator_finite_horizon} and \ref{dfn:asps_generator_infinite_horizon}. 
\item $\Lamdaht{t}$: ASPS-based prescription at time $t$.
\item $\Zhtn{t}{0}$: ASCS at time $t$. See Definitions \ref{dfn:ascs_generator_finite_horizon} and \ref{dfn:ascs_generator_infinite_horizon}.
\item $\varthetahtn{t}{1:N}$: $t$-th component of (finite-horizon) ASPS-generator. See Definition \ref{dfn:asps_generator_finite_horizon}.
\item $\eps_{p,1}, \eps_{p,2}, \delta_{p}$: Attributes of ASPS-generator.
\item $\phihtn{t}{1:N}$: $t$-th component of evolution functions of (finite-horizon) ASPS-generator. See ASPS-1.
\item $\kappa \l(\cdot, \star \r)$: Total variation distance between probability measures $\cdot$ and $\star$. 
\item $\lamdahtnspace{t}{n}$: Set of all possible ASPS-ASCS based prescriptions for agent $n$ at time $t$
\item $\lamdahtspace{t}$. Set of all possible ASPS-ASCS based prescriptions at time $t$, $\prod_{n=1}^{N}\lamdahtnspace{t}{n} $.
\item $\xtspace{\lamdahtspace{t}}$: See \eqref{eq:xlamdahtspace}.
\item $\varthetahtn{t}{0}$: $t$-th component of (finite-horizon) ASCS-generator.
\item $\eps_{c,1}, \eps_{c,2}, \delta_{c}$: Attributes of ASCS-generator.
\item $\phihtn{t}{0}$: $t$-th component of evolution functions of (finite-horizon) ASCS-generator. See ASCS-1.
\item $\whVtl{t,T}{\lambda}$ See \eqref{eq:whV_tT_lamda}.
\item $\whQtl{t,T}{\lambda}$: See \eqref{eq:whQ_tT_lamda}. 
\item $\utn{\wh{v}}{1:T}{\lambda,\star}$: See \eqref{eq:whv_tT_lamda_star}.
\item $M_c\l( \cdot; \alpha, T \r)$: See \eqref{eq:M_c}.
\item $M_p\l( \cdot; \alpha, T\r)$: See \eqref{eq:M_p}.
\item $N\l( \alpha, T\r)$: See \eqref{eq:N_alpha_T}.
\item $\varthetahn{1:N}$: Infinite-horizon ASPS-generator. See Definition \ref{dfn:asps_generator_infinite_horizon}.
\item $\varthetahn{0}$: Infinite-horizon ASCS-generator. See Definition \ref{dfn:ascs_generator_infinite_horizon}.
\item $\lamdahspace$: Set of all possible ASPS-ASCS based prescriptions when infinite-horizon ASPS and ASCS generators are used.
\item $\xtspace{\lamdahspace}$: See \eqref{eq:xlamdahspace}.
\item $\wh{B}$: See \eqref{eq:whB_whV}.
\item $\whVl{\lambda}, \whQl{\lambda}$: See \eqref{eq:whVl} and \eqref{eq:whQl}.
\item $\rhon{0} $: RNN to serve as (infinite-horizon) ASCS-generator.
\item $\rhon{1:N}$: RNNs to collectively serve as (infinite-horizon) ASPS-generator.
\item $\varphin{0}$: Coordinator's prescription network.
\item $\varphin{1:N}$: Prescription-applier networks of all agents.
\item $\psin{0}$: Coordinator's prediction network.
\item $\psin{S}$: Supervisor's prediction network.
\item $\nabla_{\varphi}L_{\infty}\l( \ut{\wh{v}}{\rho, \varphi} \r)$: Policy-gradient. See \eqref{eq:policy_gradient}.
\item $\delta_{1,i}, \delta_{2,i}$, $\delta_{3,i}$: Sequences of time-steps that satisfy three time-scale stochastic approximation conditions. See \eqref{eq:stepsizes}.
\item $l_2, l_{c,3}, l_{p,3}$: See \eqref{eq:l2}, \eqref{eq:lc3}, and \eqref{eq:lp3}.
\item $\eta$: Used as a placeholder in \eqref{eq:lc3} and \eqref{eq:lp3}.
\item $B$: Batch-size of trajectories.
\item $\tau_j$: $j^{th}$ trajectory. See \eqref{eq:tauj}.
\item $l_{\rhon{0}, \psin{0}}$: See \eqref{eq:l_rho0_psi0}.
\item $l_{\rho, \psin{S}}$: See \eqref{eq:l_rho_psiS}.
\item $R_{\rhon{0}, \psin{0}}$: See \eqref{eq:R_rho0_psi0}.
\item $R_{\rho, \psin{S}}$: See \eqref{eq:R_rho_psiS}.
\item $g_{j,t}$: Cost-to-go at time $t$ in $j^{th}$ trajectory. See \eqref{eq:gjt}.
\item $\wh{\nabla_{\varphi}L_{\infty} }\l( \ut{\wh{v}}{\rho, \varphi} \r)$: REINFORCE-estimate of policy-gradient $\nabla_{\varphi} L_{\infty} \l( \ut{\wh{v}}{\rho,\varphi} \r)$. See \eqref{eq:pg_estimate_reinforce}.

\end{itemize}
\section*{Acknowledgment}
This work was funded in part, by NSF via grants ECCS2038416, EPCN1608361, EARS1516075, CNS1955777, and CCF2008130 for V. Subramanian, and grants EARS1516075, CNS1955777, and CCF2008130 for N. Khan. The authors would also like to thank Hsu Kao for helpful discussions. 

\ifCLASSOPTIONcaptionsoff
  \newpage
\fi



\bibliographystyle{ieeetr}
\bibliography{ma_cpomdp_ieee}
%



%

\begin{IEEEbiography}
[{\includegraphics[width=1in, height=1.25in, clip, keepaspectratio]{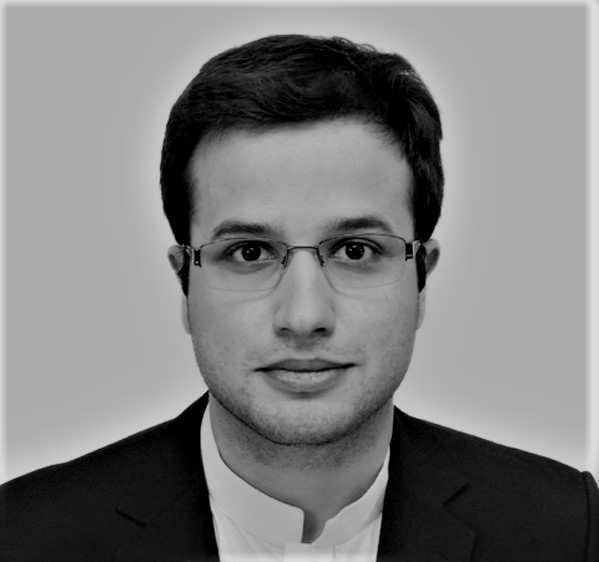}}]{Nouman Khan} (Member, IEEE) is a Ph.D candidate in the department of Electrical Engineering and Computer Science (EECS) at the University of Michigan, Ann Arbor, MI, USA. He received the B.S. degree in Electronic Engineering from the GIK Institute of Engineering Sciences and Technology, Topi, KPK, Pakistan, in 2014 and the M.S. degree in Electrical and Computer Engineering from the University of Michigan, Ann Arbor, MI, USA in 2019. His research interests include stochastic systems and their analysis and control.
\end{IEEEbiography}

\begin{IEEEbiography}[{\includegraphics[width=1in, height=1.25in, clip, keepaspectratio]{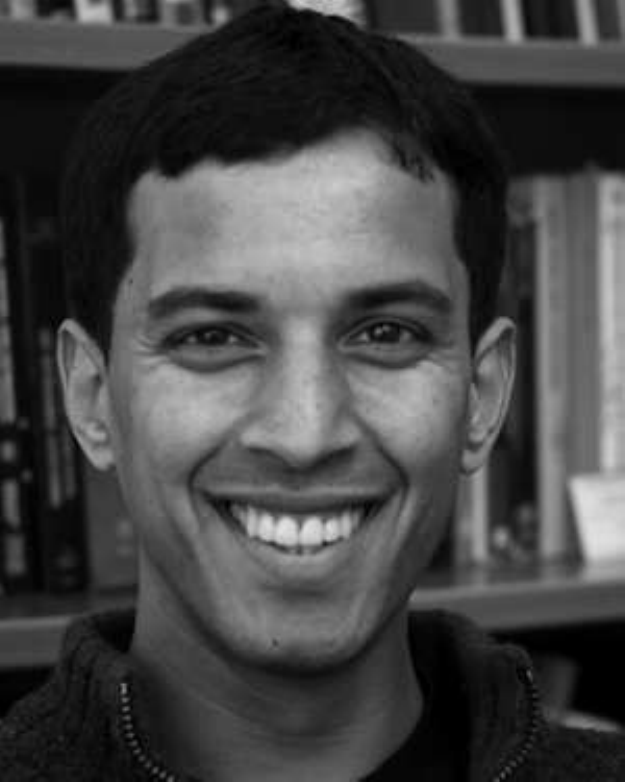}}]{Vijay Subramanian} (Senior Member, IEEE) received the Ph.D. degree in electrical engineering from the University of Illinois at Urbana-Champaign, Champaign, IL, USA, in 1999. He was a Researcher with Motorola Inc., and also with Hamilton Institute, Maynooth, Ireland, for a few years following which he was a Research Faculty with the Electrical Engineering and Computer Science (EECS) Department, Northwestern University, Evanston, IL, USA. In 2014, he joined the University of Michigan, Ann Arbor, MI, USA, where he is currently an Associate Professor with the EECS Department. His research interests are in stochastic analysis, random graphs, game theory, and mechanism design with applications to social, as well as economic and technological networks. 
\end{IEEEbiography}
\end{document}